\newtheorem{theorem}{Theorem}[section]
\newtheorem{assumption}{Assumption}
\newtheorem{lemma}[theorem]{Lemma}
\newtheorem{proposition}[theorem]{Proposition}
\newtheorem{corollary}[theorem]{Corollary}
\theoremstyle{definition}
\newtheorem{definition}[theorem]{Definition}
\newtheorem{example}{Example}[section]
\newtheorem{remark}{Remark}
\newcommand{\F}{\mathbb{F}}
\newcommand{\R}{\mathbb{R}}
\newcommand{\C}{\mathbb{C}}
\newcommand{\E}{\mathbb{E}}
\newcommand{\U}{\mathbb{U}}
\newcommand{\Y}{\mathbb{Y}}
\renewcommand{\P}{\mathbb{P}}
\newcommand{\Z}{\mathbb{Z}}
\newcommand{\prob}{\mathrm{Prob}}
\renewcommand{\vec}[1]{\boldsymbol{#1}}
\newcommand{\cov}{\mathrm{Cov}}
\newcommand{\e}{\varepsilon}
\newcommand{\1}{\mathbf{1}}
\newcommand{\bA}{\mathbf{A}}
\newcommand{\bi}{\mathbf{i}}
\newcommand{\bl}{\boldsymbol{\ell}}
\newcommand{\bm}{\mathbf{m}}
\newcommand{\br}{\mathbf{r}}
\newcommand{\bs}{\mathbf{s}}
\newcommand{\bu}{\mathbf{u}}
\newcommand{\bv}{\mathbf{v}}
\newcommand{\bx}{\mathbf{x}}
\newcommand{\by}{\mathbf{y}}
\newcommand{\bz}{\mathbf{z}}
\newcommand{\fA}{\mathfrak{A}}
\newcommand{\fB}{\mathfrak{B}}
\newcommand{\fC}{\mathfrak{C}}
\newcommand{\fI}{\mathfrak{I}}
\newcommand{\fM}{\mathfrak{M}}
\newcommand{\fP}{\mathfrak{P}}
\newcommand{\fR}{\mathfrak{R}}
\newcommand{\fU}{\mathfrak{U}}
\newcommand{\fV}{\mathfrak{V}}
\newcommand{\fY}{\mathfrak{Y}}
\newcommand{\fZ}{\mathfrak{Z}}
\newcommand{\fp}{\mathfrak{p}}
\newcommand{\fx}{\mathfrak{x}}
\newcommand{\fy}{\mathfrak{y}}
\newcommand{\cA}{\mathcal{A}}
\newcommand{\cB}{\mathcal{B}}
\newcommand{\cC}{\mathcal{C}}
\newcommand{\cE}{\mathcal{E}}
\newcommand{\cF}{\mathcal{F}}
\newcommand{\cI}{\mathcal{I}}
\newcommand{\cK}{\mathcal{K}}
\newcommand{\cL}{\mathcal{L}}
\newcommand{\cP}{\mathcal{P}}
\newcommand{\cR}{\mathcal{R}}
\newcommand{\cT}{\mathcal{T}}
\newcommand{\cU}{\mathcal{U}}
\newcommand{\wh}[1]{\widehat{#1}}
\newcommand{\wt}[1]{\widetilde{#1}}
\renewcommand{\vec}[1]{\boldsymbol{#1}}
\newcommand{\Ai}{\mathrm{Ai}}
\numberwithin{equation}{section}
\title{Fluctuations of $\beta$-Jacobi Product Processes}
\author{Andrew Ahn}
\begin{document}
\maketitle

\begin{abstract}
We study Markov chains formed by squared singular values of products of truncated orthogonal, unitary, symplectic matrices (corresponding to the Dyson index $\beta = 1,2,4$ respectively) where time corresponds to the number of terms in the product. More generally, we consider the $\beta$-Jacobi product process obtained by extrapolating to arbitrary $\beta > 0$. When the time scaling is preserved, we show that the global fluctuations are jointly Gaussian with explicit covariances. For time growing linearly with matrix size, we show convergence of moments after suitable rescaling. When $\beta = 2$, our results imply that the right edge converges to a process which interpolates between the Airy point process and a deterministic configuration. This process connects a time-parametrized family of point processes appearing in the works of Akemann-Burda-Kieburg and Liu-Wang-Wang across time. In the arbitrary $\beta > 0$ case, our results show tightness of the particles near the right edge. The limiting moment formulas correspond to expressions for the Laplace transform of a conjectural $\beta$-generalization of the interpolating process.
\end{abstract}

\section{Introduction}

Let $\F_\beta$ be the real, complex, or quaternion skew field and $\U_\beta(L)$ be the $L$-dimensional orthogonal, unitary, or symplectic group for $\beta = 1,2,4$ respectively. Consider the random process $(\by^{(T)})_{T \in \Z_+}$ formed by the squared singular values $\by^{(T)} := (y_1^{(T)} \ge \cdots \ge y_N^{(T)})$ of matrices
\begin{align} \label{eq:s_value}
Y_T := X_T \cdots X_1
\end{align}
where $X_1,X_2,\ldots$ are independent random matrices such that $X_T$ is $N_T\times N_{T-1}$ and time $T$ corresponds to the number of factors. If the distributions of $X_1,X_2,\ldots$ are taken to be invariant under the right action of $\U_\beta$, then $(\by^{(T)})_{T \in \Z_+}$ is a discrete time Markov chain. Our key example of a right $\U_\beta$ invariant random matrix is a \emph{truncated $\U_\beta$ matrix} which is an $N' \times N$ submatrix $X$ of a Haar distributed $\U_\beta(L)$ matrix. If we write $M = L-N'$, $\alpha = N'-N+1$, the squared singular values of $X$ are of the form $(1,\ldots,1,x_1,\ldots,x_{\min(M,N)})$ and the joint density is proportional to
\begin{align} \label{eq:bJac_intro}
\prod_{1 \le i < j \le \min(M,N)} |x_i - x_j|^\beta \prod_{i=1}^{\min(M,N)} x_i^{(\beta/2) \alpha - 1} (1 - x_i)^{(\beta/2)(|M-N| + 1) - 1} \, dx_i
\end{align}
for $(x_1,\ldots,x_{\min(M,N)}) \in [0,1]^{\min(M,N)}$.

Extrapolating the density \eqref{eq:bJac_intro} to arbitrary $\beta > 0$, we obtain the \emph{$\beta$-Jacobi ensemble} --- a random $N$-particle system which can be viewed as a Coulomb gas model where $\beta$ is the inverse temperature (see e.g. \cite{Ox}*{Chapter 20}). For $\beta = 1,2,4$, we can consider the \emph{$\beta$-Jacobi product process} $(\by^{(T)})_{T \in \Z_+}$ from taking $X_1,X_2,\ldots$ to be truncated $\U_\beta$ matrices. The central objects of this article are Markov processes $(\by^{(T)})_{T \in \Z_+}$ obtained by extending the $\beta$-Jacobi product process to arbitrary $\beta > 0$. In particular, we study the fluctuations of $(\by^{(T)})_{T \in \Z_+}$ under several limit regimes. Our extrapolation relies on a $\beta > 0$ generalization of matrix products under right $\U_\beta$-symmetry, introduced by Gorin-Marcus \cite{GM}. Before expanding on our asymptotic results, we describe this extrapolation in greater detail.

Although the notion of $\U_\beta$ exists only for $\beta = 1,2,4$, we can extend the notion of right $\U_\beta$ invariant matrix products to arbitrary $\beta > 0$ by viewing it as a random operation on vectors as follows. Fix $\beta \in \{1,2,4\}$ and suppose that $X$ is a fixed $N\times N$ matrix with $\F_\beta$ entries and squared singular values $\bx$, and $Y$ is an $N\times N$ truncated $\U_\beta$ matrix with squared singular values $\by$. Let $\bx \boxtimes_\beta \by$ denote the squared singular values of $YX$. Set $\Y_N$ to be the set of partitions of length at most $N$ and $1^N$ to denote $N$-vector of all ones. It is known that
\begin{align} \label{eq:Jack_relation}
\frac{J_\kappa(\bx;\beta/2)}{J_\kappa(1^N;\beta/2)} \cdot \E \left[ \frac{J_\kappa(\by;\beta/2)}{J_\kappa(1^N;\beta/2)} \right] = \E \left[ \frac{J_\kappa(\bx \boxtimes_\beta \by;\beta/2)}{J_\kappa(1^N;\beta/2)} \right], \quad \quad \kappa \in \Y_N
\end{align}
where $J_\kappa(\cdot;\beta/2)$ is the Jack function with parameter $\beta/2$ \cite{Mac}*{Chapter VII}; this is a consequence of the fact that these normalized Jack functions are zonal spherical functions for the Gelfand pair $(\mathrm{GL}_N(\F_\beta), \U_\beta(N))$ for $\beta = 1,2,4$. Moreover, \eqref{eq:Jack_relation} uniquely identifies the probability distribution of $\bx \boxtimes_\beta \by$. Since the Jack functions are defined for arbitrary $\beta > 0$, \eqref{eq:Jack_relation} suggests a means to extend $\bx \boxtimes_\beta \by$ to $\beta > 0$. Our first result which we present confirms that this can be done.

\begin{theorem} \label{thm:existence}
Let $\beta > 0$. Suppose $\bx \in [0,1]^N$ is fixed and $\by$ is distributed as the $\beta$-Jacobi ensemble for some parameters $\alpha,M$ as in \eqref{eq:bJac_intro}. This uniquely determines the distribution of a random $N$-vector $\bx \boxtimes_\beta \by$ such that \eqref{eq:Jack_relation} holds.
\end{theorem}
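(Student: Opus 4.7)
My plan is to prove the theorem in three steps: reduce to the case of a deterministic $\by$ via the Gorin--Marcus construction, deduce the general statement by mixing, and then establish uniqueness via the separating property of Jack polynomials.

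\textbf{Step 1: Deterministic case.} The crux of the argument is the $\beta > 0$ extension of matrix-product singular values developed in \cite{GM}. Given any fixed $\bx, \by \in [0,1]^N$, that construction produces a random vector $\bx \boxtimes_\beta \by$ supported in $[0,1]^N$ whose symmetric law is uniquely characterized by the factorization identity
\begin{align*}
\E\left[\frac{J_\kappa(\bx \boxtimes_\beta \by;\beta/2)}{J_\kappa(1^N;\beta/2)}\right] = \frac{J_\kappa(\bx;\beta/2)}{J_\kappa(1^N;\beta/2)} \cdot \frac{J_\kappa(\by;\beta/2)}{J_\kappa(1^N;\beta/2)}, \quad \kappa \in \Y_N.
\end{align*}

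\textbf{Step 2: Mixing.} For the $\beta$-Jacobi random $\by \in [0,1]^N$ in the statement, I would define $\bx \boxtimes_\beta \by$ to have the mixture law obtained by integrating the deterministic laws from Step 1 against the $\beta$-Jacobi density of $\by$. Taking expectations in $\by$ of the display above and applying Fubini immediately yields \eqref{eq:Jack_relation}.

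\textbf{Step 3: Uniqueness.} The normalized Jack polynomials $\{J_\kappa(\cdot;\beta/2)/J_\kappa(1^N;\beta/2) : \kappa \in \Y_N\}$ form a linear basis of symmetric polynomials in $N$ variables. Because $\bx,\by \in [0,1]^N$, the right-hand side of \eqref{eq:Jack_relation} is bounded by $1$ uniformly in $\kappa$; applying this with rectangular partitions $\kappa = (k^N)$ gives $\E[(z_1\cdots z_N)^k] \le 1$ for all $k$, while one-part partitions $\kappa = (k)$ with $k$ large force $\max_i z_i \le 1$ almost surely. Once the support of any candidate $\bz$ is confined to the compact set $[0,1]^N$, Stone--Weierstrass implies the normalized Jack polynomials are dense in continuous symmetric functions there, so matching all Jack moments pins down the symmetric law of $\bz$ uniquely.

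\textbf{Main obstacle.} The hard part is unquestionably Step 1. For $\beta \in \{1,2,4\}$ the construction is automatic, since $\bx \boxtimes_\beta \by$ is literally the squared singular values of $YX$. For arbitrary $\beta > 0$ no matrix is available, and one must build an honest Markov kernel on the Weyl chamber whose Jack-Laplace transform factorizes as above. Positivity is the essential subtlety: direct integral formulas for such a kernel in terms of Heckman--Opdam or Dunkl objects naturally yield \emph{signed} densities. I would handle this by following the inductive rank-one construction of \cite{GM}, which exhibits $\boxtimes_\beta$ as a composition of explicit, manifestly nonnegative transition densities on interlacing sets; the remainder of the proof, Steps 2 and 3, is then routine.
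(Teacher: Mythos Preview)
Your Step 1 is precisely where the argument breaks down. The Gorin--Marcus construction for \emph{deterministic} $\bx,\by$ and general $\beta>0$ yields only a \emph{signed} measure $c_{\bl,\br}^{\bs}(\wh{\cF};\theta)$ of total mass $1$; its positivity is an open conjecture (stated as such both in \cite{GM} and in this paper, just below the theorem statement). There is no ``inductive rank-one construction \dots\ manifestly nonnegative'' in \cite{GM} for $\beta\notin\{1,2,4\}$ --- that is exactly what is not known. If such a construction existed, your Steps 1--2 would prove the strictly stronger (and conjectural) statement that $\bx\boxtimes_\beta\by$ is well-defined for \emph{any} deterministic $\by$, not just $\beta$-Jacobi distributed ones, and the theorem would be uninteresting.

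The paper's proof exploits the $\beta$-Jacobi hypothesis in an essential way and never attempts the deterministic case. It works at the Macdonald level: for each $(q,t)$ there is an explicit kernel
\[
K_{q,t}^{\alpha,M,N}(\mu,\lambda)=\frac{1}{\Pi(\rho_0^N,t^\alpha\rho_0^M)}\frac{P_\lambda(\rho_0^N)}{P_\mu(\rho_0^N)}Q_{\lambda/\mu}(t^\alpha\rho_0^M),
\]
which is \emph{manifestly nonnegative} because skew Macdonald functions at nonnegative arguments are nonnegative. One checks directly (Lemma~\ref{lem:Pmom}) that this kernel satisfies the Macdonald analogue of the Jack relation. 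Taking $q=e^{-\e}\to 1$ with $t=q^\theta$, the kernel converges weakly to a limit $\cK_\theta^{\alpha,M,N}(\bv,d\bu)$ which is automatically a probability measure (weak limits of probability measures on the compact $\cU^N$) and satisfies \eqref{eq:Jack_relation}. The $\beta$-Jacobi structure enters because the specialization $t^\alpha\rho_0^M$ is exactly what produces the Jacobi ensemble in the Heckman--Opdam limit (Proposition~\ref{prop:mac_to_jacobi}). Positivity is thus obtained \emph{before} the limit, not via any pointwise-in-$\by$ argument. Your Step 3 is fine and matches the paper's reasoning.
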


This gives us a Markov transition kernel from $\bx$ to $\bx \boxtimes_\beta \by$ via the distribution of $\by$. The assumption that $\bx \in [0,1]^N$ can be lifted to $\R_{\ge 0}^N$ without much difficulty, though this is not required for our purposes. We note that this theorem is quite nontrivial. For generally distributed $\by$, it is only a conjecture that the measure associated to $\bx \boxtimes_\beta \by$ via \eqref{eq:Jack_relation} is positive, see \cite{GM} and references therein for further details. In our setting where $\by$ is a $\beta$-Jacobi ensemble, we obtain positivity through a connection with Macdonald symmetric functions. The general case where $X$ and $Y$ are rectangular matrices can be reduced to the case where $X$ and $Y$ are square via matrix projections, see \Cref{sec:matrix}.

From this extrapolation, we obtain Markov chains $(\by^{(T)})_{T \in \Z_+}$ for each $\beta > 0$. More specifically, if $\bx^{(1)},\bx^{(2)},\ldots$ are independent random $N$-vectors where $\bx^{(T)}$ is distributed as the $\beta$-Jacobi ensemble with parameters $\alpha_T,M_T$, then the \emph{$\beta$-Jacobi product process} with parameters $(\alpha_T,M_T)_{T \in \Z_+}$ is the Markov chain $(\by^{(T)})_{T \in \Z_+}$ where $\by^{(T)} = \bx^{(T)} \boxtimes_\beta \by^{(T-1)}$ and $\by^{(1)} = \bx^{(1)}$. Thus we obtain a $\beta > 0$ deformation of the squared singular values of products of truncated $\U_\beta$-matrices.

The main objective of this article is to study the fluctuations of $(\by^{(T)})_{T\in\Z_+}$ in the limit as $N\to\infty$. In particular, we consider the following two settings: (i) global fluctuations where we do not rescale time as $N$ grows and (ii) local fluctuations at the right edge (that is, of the rightmost particles) where time grows linearly with $N$. Along the way, we prove a limit shape result for arbitrary $\beta > 0$ which extends known results \cites{Vo91,CS06} for $\beta = 1,2,4$.

We show that the global fluctuations are described by a Gaussian process whose covariance function exhibits logarithmic correlation on short-scales. This is reminiscent of the Gaussian free field --- a distinguished $2$-dimensional, conformally invariant, log-correlated Gaussian field which appears in the fluctuations of many $2$-d models from statistical mechanics.

For local fluctuations, our results are twofold. We consider the regime where time $T$ tends to infinity linearly with $N$. First, we demonstrate that for $\beta = 2$, the fluctuations of the right edge are described by a process in time $\wh{T}$ whose fixed-time marginals interpolate between the Airy point process as $\wh{T} \to 0$ and a deterministic, ``picket fence'' configuration as $\wh{T} \to \infty$. Second, for arbitrary $\beta > 0$, we show tightness of the point process at the right edge where we expect to see a $\beta$-generalization of the interpolating process. Moreover, we provide exponential moment formulas for this conjectural limit process.

From the method's perspective, our goal is to combine ideas about Macdonald processes (special processes derived from the two parameter family of Macdonald symmetric functions), $\beta$-Jacobi ensembles, and products of matrices into a unified picture \cites{BC,BCGS,BG1,GM,BGS}. Our approach uses Macdonald symmetric, Jack symmetric, Heckman-Opdam hypergeometric functions, and their correspondence \cite{GM} to product processes in order to obtain moment formulas. By the asymptotic analysis of these moment formulas, we access the fluctuations of $\beta$-Jacobi product processes.

As far as the author is aware, these results are the first to describe fluctuations for products of $\beta$-ensembles beyond $\beta = 2$. For local fluctuations at $\beta = 2$, previous works \cites{ABK,LWW} established \emph{fixed-time convergence results} to a family of point processes indexed by $\wh{T}$ for the case of Ginibre matrices. These point processes interpolate between the Airy point process and picket fence statistics as $\wh{T}$ ranges from $0$ to $\infty$. Our main theorems on local fluctuations extend these results to \emph{joint convergence across time} $\wh{T}$ for Ginibre \emph{and} Jacobi matrices. As a consequence, the interpolating process that appears in our work links together this $\wh{T}$-parametrized family of point processes across time; we provide more details below. The appearance of this process is independent of the choice of Jacobi parameters. This suggests that there may be a wider universality class of products of matrices with generic distributions where this interpolating process appears. For general $\beta > 0$, we find universality of limiting moment formulas which do not depend on the parameters of the $\beta$-Jacobi product process. We conjecture that there exists a $\beta$-generalization of the interpolating process. Under this conjecture, these moment formulas are expressions for the Laplace transforms of the generalized interpolating processes evaluated at positive integer values.

We now proceed to a more detailed discussion of our main results and methods. Our main results on global fluctuations are provided in \Cref{sec:intro:global} along with additional background. Similarly, \Cref{sec:intro:local} contains some background and our main results on local fluctuations at the right edge. We conclude the introduction with a description of our methods in \Cref{sec:intro:method}.

\subsection{Global Fluctuations} \label{sec:intro:global}

Our first asymptotic regime preserves the time scaling as the number of particles $N$ tends to $\infty$. We begin with several known limit shape and fluctuation results under this regime.

In a series of breakthrough articles (see e.g. \cites{Vo87,Vo91}), Voiculescu established that the squared singular values of products of certain large unitarily invariant random matrices concentrate around a limit shape. These results have since been generalized  to include orthogonally and symplectically invariant matrix ensembles including the $\beta$-Hermite, Laguerre, and Jacobi ensembles for $\beta = 1,4$, see e.g. \cite{CS06}*{Theorem 5.1}.

Under general assumptions on unitarily invariant random matrices $X_1,X_2,\ldots$, the global fluctuations of the squared singular values of $Y_T$ are known to be Gaussian due to Collins-Mingo-Sniady-Speicher \cite{CMSS07}*{Theorems 7.9 and 8.3} via second-order freeness, and due to Guionnet-Novak \cite{GN} via Schwinger-Dyson equations. An explicit form for the covariance was recently discovered by Gorin-Sun \cite{GSu} who used a difference operators approach on multivariate Bessel functions. In particular, they found that the covariance can be identified by a Gaussian field related to the Gaussian free field, discussed further below.

On a related note, a variety of authors established the Gaussianity of global fluctuations 
for eigenvalues of other types of product matrix ensembles. By asymptotic analysis of the Stieltjes transform, Vasilchuk \cite{Va16} proved a central limit theorem for linear statistics of eigenvalues of a certain product of unitary matrices. A recent, general result of Coston-O'Rourke \cite{CO19} showed Gaussian fluctuations for the eigenvalues of products of Wigner matrices.

While the fluctuations for $\beta = 2$ are well-studied, the same cannot be said even for $\beta = 1,4$. It appears that the exact form of global fluctuations for $\beta = 1,4$ were not accessed prior to this work; see however \cites{MP13,Red15} for generalizations of the notion of second order freeness for orthogonal and symplectic ensembles. This gap in the literature is one of the motivations for this work.

From the perspective of $\beta$-ensembles, our work seeks to extend results on fluctuations of $\beta$-ensembles to $\beta$-product processes. Several prior works established Gaussianity of fluctuations for ordinary $\beta$-ensembles. The first work of this kind was due to Johansson \cite{Jo98} where Gaussian fluctuations for $\beta$-ensembles with analytic potentials are shown using loop equations. This approach was further extended in the articles of Kriecherbauer-Shcherbina \cite{KS}, Borot-Guionnet \cites{BoGu1,BoGu2}, Shcherbina \cite{Shch}, Borodin-Gorin-Guionnet \cite{BGG}, and Dimitrov-Knizel \cite{DK}. Dumitriu-Paquette \cite{DP} established global fluctuations for the general $\beta$-Jacobi ensemble through a tridiagonal representation of this ensemble \cite{DE}. Borodin-Gorin \cite{BG1} generalized this result for the $\beta$-Jacobi corners process, showing Gaussian free field fluctuations using an approach related to ours involving Macdonald processes (discussed further in \Cref{sec:intro:method}).

We first provide a limit shape theorem for arbitrary $\beta > 0$, extending known results for $\beta = 1,2,4$. Let
\begin{align}
\mathrm{m}_{\by^{(T)}} := \frac{1}{N} \sum_{i=1}^N \delta_{y_i^{(T)}}.
\end{align}

\begin{theorem}[Limit Shape] \label{thm:lln}
Suppose $(\by^{(T)})_{T \in \Z_+}$ is distributed as the $N$-particle $\beta$-Jacobi product process with parameters $(\alpha_T:= \alpha_T(N), M_T := M_T(N))_{T \in \Z_+}$ and $\wh{\alpha}_T, \wh{M}_T \ge 0$ such that
\[ \lim_{N\to\infty} (\alpha_T/N,M_T/N) = (\wh{\alpha}_T,\wh{M}_T) \]
for each $T \in \Z_+$. Then for any positive integers $k,T$ there exists a probability measure $\bm_{(\wh{\alpha}_\tau,\wh{M}_\tau)_{\tau=1}^T}$ (independent of $\beta$) such that
\[ \lim_{N\to\infty} \mathrm{m}_{\by^{(T)}} = \bm_{(\wh{\alpha}_\tau,\wh{M}_\tau)_{\tau=1}^T} \]
weakly in probability. Moreover, we have
\[ \int \! x^k \, d\bm_{(\wh{\alpha}_\tau,\wh{M}_\tau)_{\tau=1}^T} = - \frac{1}{k} \cdot \frac{1}{2\pi\bi} \oint \left( \frac{v}{v+1} \prod_{\tau = 1}^T \frac{v - \wh{\alpha}_\tau }{v - \wh{\alpha}_\tau - \wh{M}_\tau} \right)^k \, dv \]
where the contour is positively oriented around the pole at $-1$ but does not enclose $\wh{\alpha}_\tau + \wh{M}_\tau$ for $1 \le \tau \le T$.
\end{theorem}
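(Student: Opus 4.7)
The natural strategy is to extract exact moment formulas at finite $N$ from the Jack observable multiplicativity \eqref{eq:Jack_relation} and then pass to $N \to \infty$.

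Iterating \eqref{eq:Jack_relation} in $\tau$ yields, for each $\kappa \in \Y_N$,
\begin{equation*}
\E\!\left[\frac{J_\kappa(\by^{(T)};\beta/2)}{J_\kappa(1^N;\beta/2)}\right] = \prod_{\tau=1}^{T} \E\!\left[\frac{J_\kappa(\bx^{(\tau)};\beta/2)}{J_\kappa(1^N;\beta/2)}\right].
\end{equation*}
Each factor on the right is a normalized Jack expectation over a $\beta$-Jacobi ensemble with parameters $(\alpha_\tau, M_\tau)$, which admits a closed-form Kadell-type evaluation as a product of ratios of Gamma functions in the parts of $\kappa$ together with $\alpha_\tau, M_\tau, N, \beta$. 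This furnishes a full family of observables indexed by $\kappa$.

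To convert these observables into moment information, I would apply a Macdonald--Cherednik / Heckman--Opdam difference operator diagonal on $\{J_\kappa\}$ to a generating series such as $\prod_i (z - y_i^{(T)})^{-1}$, in the spirit of the related works \cite{BG1}, \cite{GM}, \cite{GSu}. The net effect is a single-contour integral representation
\begin{equation*}
\frac{1}{N}\,\E\!\left[p_k(\by^{(T)})\right] \;=\; -\frac{1}{k}\cdot\frac{1}{2\pi\bi} \oint F_{N,\beta}(v)^k\, dv \;+\; O(N^{-1}),
\end{equation*}
where $F_{N,\beta}$ is a rational function in which $\alpha_\tau, M_\tau$ enter through $\alpha_\tau/N, M_\tau/N$ and $\beta$ enters only through subleading terms in $N$. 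The contour is chosen to encircle the pole at $v = -1$ (arising from the $N$ unit singular values implicit in the normalization $J_\kappa(1^N;\beta/2)$) while excluding the poles at $v = \alpha_\tau + M_\tau$ coming from the $\beta$-Jacobi Gamma factors. Under $\alpha_\tau/N \to \wh{\alpha}_\tau$, $M_\tau/N \to \wh{M}_\tau$, after rescaling $v$ by $N$, $F_{N,\beta}^k$ converges uniformly on the contour to $\bigl( \tfrac{v}{v+1} \prod_\tau \tfrac{v - \wh{\alpha}_\tau}{v - \wh{\alpha}_\tau - \wh{M}_\tau} \bigr)^k$, yielding the stated formula in the limit.

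Finally, I would upgrade moment convergence to weak convergence in probability by showing $\var\!\bigl(p_k(\by^{(T)})\bigr) = o(N^2)$; this follows by expressing $\E[p_k(\by^{(T)})^2]$ via the same difference operator approach applied twice, or equivalently via a two-row Jack observable with $\kappa$ having two parts equal to $k$, and checking leading-order cancellation. Since $\mathrm{m}_{\by^{(T)}}$ is supported on $[0,1]$, the limit measure is uniquely determined by its moments. The principal obstacle I anticipate is the middle step: the clean derivation of the single-contour integral requires careful handling of the Jack normalizations and of the combinatorial residues produced by the difference operator, and one must verify that the $\beta$-dependent contributions cancel at leading order to produce the $\beta$-independent limit asserted by the theorem.
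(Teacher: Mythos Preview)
Your overall strategy---prove convergence of $\frac{1}{N}\E[p_k(\by^{(T)})]$ to the stated contour integral, then show $\var(p_k(\by^{(T)}))=o(N^2)$ to upgrade to convergence in probability---matches the paper exactly. The variance step is precisely what the paper does, invoking its covariance computation (\Cref{prop:covariance}).

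The technical route to the moment formula differs, and your version of the middle step is not quite right as stated. The paper does not obtain a \emph{single}-contour formula at finite $N$; instead it derives an \emph{exact $k$-fold} contour integral (\Cref{thm:obs}) of the form
\[
\E[\fP_k(\by^{(T)})]=\frac{-\theta^{-1}}{(2\pi\bi)^k}\oint\!\cdots\!\oint \cB(U)\,\cA_T(U)\,dU,
\]
obtained not directly at the Jack level but by realizing the $\beta$-Jacobi product process as a limit of Macdonald processes (\Cref{thm:mactoprod}) and importing the Negut-type operators of \cite{GZ,Ahn1}. After rescaling $u_i=\theta N v_i$ and sending $N\to\infty$, the cross factors $\frac{(v_j-v_i)(v_j-v_i+\frac{1-\theta}{\theta N})}{(v_j-v_i+\frac{1}{\theta N})(v_j-v_i-\frac{1}{N})}$ collapse to $1$ and the $k$-fold integral becomes
\[
\frac{-1}{(2\pi\bi)^k}\oint\!\cdots\!\oint\frac{1}{(v_2-v_1)\cdots(v_k-v_{k-1})}\prod_{i=1}^k\frac{v_i}{v_i+1}\prod_\tau\frac{v_i-\wh{\alpha}_\tau}{v_i-\wh{\alpha}_\tau-\wh{M}_\tau}\,dv_i,
\]
which is then reduced to the single integral via a separate combinatorial lemma (\cite{GZ}*{Corollary A.2}, quoted as \Cref{lem:dim_red}). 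So the $\beta$-independence you flag as an obstacle is handled automatically: all $\theta$-dependence sits in the $O(N^{-1})$ corrections of the $k$-fold integrand, and the dimension reduction happens only \emph{after} the limit.

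Your proposed shortcut---applying a first-order Cherednik-type operator to $\prod_i(z-y_i^{(T)})^{-1}$ to get a one-variable formula at finite $N$ up to $O(N^{-1})$---would require an argument you have not supplied: the standard first Macdonald operator produces observables of the type $\sum_i t^{\lambda_i-i}$, not $p_k$, and extracting $p_k$ from a single contour at finite $N$ is exactly what forces the $k$-fold nesting in the paper's approach.
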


\begin{remark}
Consequently, for $T = 1$, we have some limit $\bm_{\wh{\alpha}_1,\wh{M}_1}$. Then the limit of products of $\beta$-ensembles can be expressed as
\[ \bm_{(\wh{\alpha}_\tau, \wh{M}_\tau)_{\tau=1}^T} = \bm_{\wh{\alpha}_1,\wh{M}_1} \boxtimes \cdots \boxtimes \bm_{\wh{\alpha}_T,\wh{M}_T} \]
where $\boxtimes$ denotes the \emph{free multiplicative convolution} (see e.g. \cite{Vo87}).
\end{remark}

Our main result for global asymptotics states that the fluctuations are Gaussian with explicit covariances.

\begin{theorem}[Global Fluctuations] \label{thm:clt}
Suppose $(\by^{(T)})_{T \in \Z_+}$ is distributed as the $N$-particle $\beta$-Jacobi product process with parameters $(\alpha_T:= \alpha_T(N), M_T := M_T(N))_{T \in \Z_+}$ and $\wh{\alpha}_T,\wh{M}_T \ge 0$ such that
\[ \lim_{N\to\infty} (\alpha_T/N,M_T/N) = (\wh{\alpha}_T,\wh{M}_T) \]
for each $T \in \Z_+$. Then for any positive integers $m$, $k_1,\ldots k_m$ and $T_1 \ge \ldots \ge T_m$, we have that the random vector
\begin{align} \label{eq:clt_vector}
\left( \int \! x^{k_1} \, d\mathrm{m}_{\by^{(T_1)}}(x) - \E \int \! x^{k_1} \, d\mathrm{m}_{\by^{(T_1)}}(x), \ldots, \int \! x^{k_m} \, d\mathrm{m}_{\by^{(T_m)}}(x) - \E \int \! x^{k_m} \, d\mathrm{m}_{\by^{(T_m)}}(x) \right)
\end{align}
converges in distribution to a Gaussian vector as $N\to\infty$ such that the covariance between the $i$th and $j$th component is given by
\begin{align} \label{eq:asymp_cov}
\frac{(\beta/2)^{-1}}{(2\pi\bi)^2} \oint \oint  \frac{1}{(v_2 - v_1)^2} \left( \frac{v_1}{v_1 + 1} \prod_{\tau=1}^{T_i} \frac{v_1 - \wh{\alpha}_\tau}{v_1 - \wh{\alpha}_\tau - \wh{M}_\tau} \right)^{k_i} \left( \frac{v_2}{v_2 + 1} \prod_{\tau=1}^{T_j} \frac{v_2 - \wh{\alpha}_\tau}{v_2 - \wh{\alpha}_\tau - \wh{M}_\tau} \right)^{k_j} \, dv_1 \, dv_2
\end{align}
where the $v_2$-contour encloses the $v_1$-contour, both the $v_1,v_2$-contours are positively oriented around $-1$, but the $v_1$-contour does not contain $\wh{\alpha}_\tau + \wh{M}_\tau$ for $1 \le \tau \le T_i$ and the $v_2$-contour does not contain $\wh{\alpha}_\tau + \wh{M}_\tau$ for $1 \le \tau \le T_j$.
\end{theorem}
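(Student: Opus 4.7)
The plan is to compute joint moments $\E\bigl[\prod_{i=1}^m p_{k_i}(\by^{(T_i)})\bigr]$ of power sums $p_k(\by) = \sum_j y_j^k$ as nested contour integrals and then perform a saddle-point analysis. This follows the Macdonald/Jack observable framework developed in \cites{BG1,BGG,GSu}, adapted to the $\beta$-Jacobi product setting.

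For the moment formulas, I would use the key identity \eqref{eq:Jack_relation} iteratively: the normalized Jack averages are multiplicative along the Markov chain, so
\[
\E\!\left[\frac{J_\kappa(\by^{(T)};\beta/2)}{J_\kappa(1^N;\beta/2)}\right]
= \prod_{\tau=1}^T \E\!\left[\frac{J_\kappa(\bx^{(\tau)};\beta/2)}{J_\kappa(1^N;\beta/2)}\right],
\]
with each factor on the right expressible in closed form via a $\beta$-Selberg/Kadell integral as an explicit rational function of $\kappa$'s parts. Jack-difference operators of Macdonald-Stanley/Nazarov-Sklyanin type, whose eigenvalues on $J_\kappa$ are symmetric functions of the shifted parts of $\kappa$, then act diagonally on this generating identity. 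Applying such an operator of order $k$ and specializing at $\kappa = \emptyset$ converts $\E[p_k(\by^{(T)})]$ into a single-variable contour integral whose integrand is, up to lower-order corrections in $N$, the $k$-th power of the function $\tfrac{v}{v+1}\prod_{\tau=1}^T\tfrac{v-\wh\alpha_\tau}{v-\wh\alpha_\tau-\wh M_\tau}$ appearing in \Cref{thm:lln}.

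For the joint moments, I would apply $m$ such difference operators, one per observable, at the respective times $T_1\ge\cdots\ge T_m$. Because the observables sit on a totally ordered chain of levels, repeated applications produce an $m$-fold contour integral with nested contours. The non-commutativity of the Jack-difference operators contributes, for each pair of operators, a Cauchy-type kernel of the form $(\beta/2)^{-1}(v_j-v_i)^{-2}$; after expansion into connected components, this is the precise source of the Gaussian covariance in \eqref{eq:asymp_cov}. One then saddles each contour at leading order: writing the integrand as $\exp(N F(v) + O(1))$, the fully disconnected part reproduces the limit shape \Cref{thm:lln}, the fully pair-connected parts yield \eqref{eq:asymp_cov}, and $r$-fold connected components for $r\ge 3$ pick up additional factors of $N^{-1}$ and vanish. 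Joint convergence to a Gaussian vector then follows by the method of moments via the vanishing of cumulants of order $\ge 3$.

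The main obstacle is the combinatorial analysis of the Jack-difference operator expansions: unlike the $\beta=2$ Schur setting (where the operators have a clean Fock-space structure), the Jack operators produce a proliferation of error terms, and careful bookkeeping is required to ensure that the $(\beta/2)^{-1}$ prefactor on the pair-connected kernel arises uniformly across all pairs of observables and is the only non-vanishing contribution beyond the leading-order saddle. A secondary technical point is to verify that the contour deformations needed for the saddle analysis are admissible: the contour around $v=-1$ at level $j$ must be enlargeable to enclose the contour at level $i<j$ without crossing any pole at $\wh\alpha_\tau+\wh M_\tau$. This separation is inherited from the fact, implicit in \Cref{thm:lln}, that the limiting density stays strictly away from the singularities coming from the edges of the $\beta$-Jacobi factors.
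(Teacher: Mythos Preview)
Your proposal is correct in overall strategy and closely matches the paper's approach: derive joint moment formulas for the power sums $\fP_{k_i}(\by^{(T_i)})$ as nested contour integrals, expand the pairwise cross-terms to extract a $(\beta/2)^{-1}(v_j-v_i)^{-2}$ kernel at order $N^{-2}$, and use a connected-graph/cumulant decomposition to show that $r$-fold connected pieces for $r\ge 3$ vanish, yielding Gaussianity by the method of moments. This is precisely what the paper does in \Cref{sec:global}, reducing to \Cref{prop:covariance,prop:high_cumulant}.

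There are, however, two implementation differences worth flagging. First, the paper does not apply Jack difference operators of Nazarov--Sklyanin type directly. Instead it passes through Macdonald processes (\Cref{thm:mactoprod}) and uses the Negut operators of \cites{N,GZ,Ahn1}, yielding the exact formula of \Cref{thm:obs}; this produces a $(k_1+\cdots+k_m)$-dimensional integral, not an $m$-dimensional one, and the reduction to low dimension happens only \emph{after} the $N\to\infty$ limit via \Cref{lem:dim_red}. Your claim that a single $k$th-order operator gives a one-variable integral directly holds only at $\theta=1$ (cf.\ \Cref{thm:obs_cpx}); for general $\theta$ the $k$-fold structure is essential. Second, the asymptotic analysis in the paper is not a saddle-point argument---there is no exponential form $\exp(NF(v))$. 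It is a straightforward rescaling $u_{i,j}=\theta N v_{i,j}$ followed by Taylor expansion of the cross-term $\cC(U_i,U_j)$: one computes $\cC-1 = \theta^{-1}N^{-2}\sum_{a,b}(v_{j,b}-v_{i,a})^{-2}+O(N^{-3})$ uniformly on fixed contours, and the connected-graph identity \eqref{eq:mobius} converts the cumulant into a sum over connected graphs where each edge contributes $O(N^{-2})$ against an overall $N^m$, so graphs with $|E|\ge m-1$ edges give $O(N^{m-2(m-1)})=o(1)$ for $m\ge 3$. No critical-point analysis enters.
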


We observe that the covariance depends on the symmetry class $\beta$ only through a factor of $\beta^{-1}$. This is a common feature feature among $\beta$-ensembles in the literature; compare with e.g. \cite{BG1}. In the case $\beta = 2$, our result intersects that of \cite{GSu}. In particular, this means that the covariance can be described in terms of a Gaussian process whose covariance function is logarithmic on short-scales, as in \cite{BG1}*{Proof of Theorem 4.13}, \cite{GSu}*{Proof of Corollary 4.10}. This is related to the appearance of the Gaussian free field in $\beta$-ensembles which has a distinguished logarithmic covariance structure given by the Green's kernel of the Laplacian on the upper half plane with Dirichlet boundary conditions.

\subsection{Local Fluctuations for Growing Products} \label{sec:intro:local}

Our second asymptotic regime takes the number of products $T$ and particles $N$ to grow linearly with respect to one another. Under this regime, we study the fluctuations of the right edge.

A recent result due to Akemann-Burda-Kieburg \cite{ABK} and Liu-Wang-Wang \cite{LWW}*{Theorem 1.2} considers this asymptotic regime for squared singular values of $Y_T$ as defined by \eqref{eq:s_value} with $X_1,X_2,\ldots$ taken to be $N\times N$ complex Ginibre matrices. We recall that an $N\times N$ complex Ginibre matrix is a matrix of i.i.d. standard complex Gaussians. Define $(\xi_1 := \xi_1(N) \ge \cdots \ge \xi_N := \xi_N(N))$ from the squared singular values $\by^{(T)} = (y_1^{(T)},\ldots,y_N^{(T)})$ of $Y_T$ via
\begin{align} \label{eq:original_rescale}
y_i^{(T)} = N^{T+1} e^{\xi_i}.
\end{align}
When $\lim_{N\to\infty} T/N = \wh{T} > 0$, these authors showed that in the limit $N\to\infty$, the point process $\xi_1,\xi_2,\ldots$ converges to a $\wh{T}$-parametrized determinantal point process \cite{AGZ}*{C4.2} $\fx_i^{(\wh{T})},\fx_2^{(\wh{T})},\ldots$ whose correlation kernel is given by
\begin{align} \label{eq:corr_kernel}
K_{\wh{T}}(\fx,\fy) = \int_{1-\bi\infty}^{1+\bi\infty} \frac{ds}{2\pi\bi} \oint \frac{dt}{2\pi\bi} \frac{1}{s-t} \frac{\Gamma(t)}{\Gamma(s)} \frac{e^{\frac{\wh{T} (s^2 - s)}{2} - \fy s}}{e^{\frac{\wh{T} (t^2 - t)}{2} - \fx s}}
\end{align}
where the $t$-contour is to the left of $1$, starting from $-\infty - \bi \e$, positively looping around $0,-1,-2,\ldots$, and then going to $-\infty + \bi \e$. We note that the description above differs from that of Liu-Wang-Wang \cite{LWW}*{Equation 1.8} by a translation of $\wh{T}/2$. Akemann-Burda-Kieburg have a different form \cite{ABK}*{Equation 19} for the correlation kernel of $\fx_1^{(\wh{T})}, \fx_2^{(\wh{T})},\ldots$ where their point process differs from that of ours by a multiplicative factor of $\wh{T}^{2/3}$ and a translation by $1 + \log \wh{T}$.

The following convergence statements are given in \cite{LWW}*{Theorem 3.2} with a proof sketch. As $\wh{T} \to 0$, the process $\zeta_1^{(\wh{T})},\zeta_2^{(\wh{T})},\ldots$ defined by
\[ \fx_i^{(\wh{T})} = 2^{-1/3} \wh{T}^{2/3} \zeta_i^{(\wh{T})} + 1 + \log \wh{T} \]
converges to the Airy point process \cite{AGZ}*{p232-234} defined by the correlation kernel
\[ K_{\mathrm{Airy}}(\zeta,\eta) = \frac{\Ai(\zeta) \Ai'(\eta) - \Ai(\eta) \Ai'(\zeta)}{\zeta - \eta}. \]
As $\wh{T} \to +\infty$, we have the convergence
\[ \left\{ \wh{T}^{-1} \fx_i^{(\wh{T})} \right\}_{i=1}^\infty \to \left\{ -i + \frac{1}{2} \right\}_{i=1}^\infty \]
to deterministic particles, referred to as \emph{picket fence statistics} due to the associated measure being a semi-infinite set of equally spaced unit delta masses. Furthermore, as $\wh{T} \to +\infty$, the fluctuations of the largest particle are Gaussian. More precisely,
\[ \wh{T}^{-1/2} \left( \fx_1^{(\wh{T})} + \frac{\wh{T}}{2} \right) \to \mbox{standard Gaussian}. \]
While the convergence of the second largest, third largest, etc eigenvalues are not explicitly shown in \cite{ABK}, \cite{LWW}, there is good evidence (e.g. \cite{LWW}*{Theorem 1.1}) that each should be Gaussian and a rigorous proof should be accessible through the correlation kernels. We note that Liu-Wang-Wang also consider Ginibre matrices of different rectangular sizes and show that the point process $(\fx^{(\wh{T})}_i)_{i=1}^\infty$ appears universally in this setting \cite{LWW}*{Section 3.4}.

Although the correlation kernel \eqref{eq:corr_kernel} determines $\fx^{(\wh{T})} := (\fx_i^{(\wh{T})})_{i=1}^\infty$, we use an alternative characterization via the Laplace transform. By combining our result (in particular \Cref{thm:local_cpx}) for the Ginibre case with that of \cite{LWW}, we obtain a formula for the Laplace transform of $\fx^{(\wh{T})}$.

\begin{theorem}[Formula for Laplace Transform] \label{thm:laplace}
For $c_1,\ldots,c_m > 0$, we have
\begin{align*}
\begin{multlined}
\E\left[ \prod_{i=1}^m \sum_{j=1}^\infty e^{c_i \fx_j^{(\wh{T})}} \right] \\
= \frac{1}{(2\pi\bi)^m} \oint \cdots \oint \det \left[ \frac{1}{u_i - u_j - c_j} \right]_{i,j=1}^m \prod_{i=1}^m \frac{\Gamma(-u_i-c_i)}{\Gamma(-u_i)} \exp\left[ -\wh{T} \left( \frac{c_i(c_i + 1)}{2} + c_i u_i \right) \right] du_i
\end{multlined}
\end{align*}
where the $u_i$-contour $\fU_i$ is a positively oriented contour around $-c_i,-c_i+1,-c_i+2,\ldots$ which starts and ends at $+\infty$, and $\fU_i$ is enclosed by $\fU_j - c_i$, $\fU_j + c_j$ for $j > i$.
\end{theorem}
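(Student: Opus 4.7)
The plan is to derive the Laplace transform formula as a consequence of Theorem~\ref{thm:local_cpx} combined with \cite{LWW}*{Theorem 1.2}. Theorem~\ref{thm:local_cpx}, established earlier in the paper for the complex Ginibre product case, produces the limit as $N \to \infty$ with $T/N \to \wh{T}$ of the joint exponential moments
\[
\E\left[\prod_{i=1}^m \sum_{j=1}^N e^{c_i \xi_j}\right],
\]
with $\xi_j$ as in \eqref{eq:original_rescale}, and this limit is precisely the right-hand side of the claimed formula. Independently, \cite{LWW}*{Theorem 1.2} shows, via convergence of correlation functions, that the edge point process $\{\xi_j\}$ converges to the determinantal process $\fx^{(\wh{T})}$ with correlation kernel \eqref{eq:corr_kernel}. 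Together these two inputs identify the contour integral as the Laplace functional $\E\left[\prod_{i=1}^m \sum_{j=1}^\infty e^{c_i \fx_j^{(\wh{T})}}\right]$.

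The technical step that needs attention is bridging \emph{convergence of joint exponential moments} (from Theorem~\ref{thm:local_cpx}) and \emph{convergence of the point process} (from \cite{LWW}) into \emph{convergence of the Laplace functional}. First I would establish tightness of the random variables $P_{c_i}^{(N)} := \sum_{j=1}^N e^{c_i \xi_j}$ as $N \to \infty$ by specializing the finite-$N$ moment formula to $c_1 = \cdots = c_m = c_i$ and reading off uniform bounds for $\E[(P_{c_i}^{(N)})^p]$ for each positive integer $p$. Second, combining these moment bounds with the point-process convergence of \cite{LWW}, I would identify the distributional limit of $P_{c_i}^{(N)}$ with $\sum_{j=1}^\infty e^{c_i \fx_j^{(\wh{T})}}$; this infinite sum is almost surely finite for $c_i > 0$ because of the picket fence asymptotics $\fx_j^{(\wh{T})} \sim -j \wh{T}$ recalled in Section~\ref{sec:intro:local}. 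Third, the uniform integrability inherited from the moment bounds upgrades convergence in distribution of the $P_{c_i}^{(N)}$'s to convergence of expectations of their products, yielding exactly the asserted identity.

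The main obstacle really lies inside Theorem~\ref{thm:local_cpx}: the derivation of the finite-$N$ contour-integral formula via the Macdonald/Jack moment-formula machinery specialized to $\beta = 2$ and complex Ginibre factors, followed by the steepest descent analysis of the resulting integrals along the diagonal regime $T \sim \wh{T} N$. The Gamma-function factors $\Gamma(-u_i - c_i)/\Gamma(-u_i)$ emerge from the ratios of Pochhammer symbols in the finite-$N$ Schur-type moment formula, the Gaussian exponential factor $\exp[-\wh{T}(c_i(c_i+1)/2 + c_i u_i)]$ comes from the $T$-th power of the one-step eigenvalue of the difference operator at the rescaled saddle, and the determinantal prefactor assembles from the nested-contour arrangement and a Cauchy-type identity. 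The contour conditions on $\fU_i$, $\fU_j - c_i$, $\fU_j + c_j$ are inherited directly from the nesting that appears in Theorem~\ref{thm:local_cpx}. Granting that theorem and \cite{LWW}*{Theorem 1.2}, the remaining argument outlined above is soft and analytic in character.
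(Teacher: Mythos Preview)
Your proposal is correct and takes essentially the same approach as the paper: the paper's proof of \Cref{thm:laplace} is the one-line remark ``By combining \cite{LWW}*{Theorem 3.2} with \Cref{thm:local_cpx}, we obtain \Cref{thm:laplace},'' and you have spelled out the soft analytic bridging (moment bounds, tightness, uniform integrability, identification of the limit sum) that the paper leaves implicit.
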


Taking $\wh{T} \to 0$ and $c_i$ to grow linearly with $\wh{T}^{-2/3}$, we can recover a formula for the Laplace transform of the Airy point process (see \cite{BG2}*{Equation 14}). Taking $\wh{T} \to +\infty$ and $c_i$ to decay linearly with $\wh{T}^{-1/2}$, we obtain the Laplace transform of a Gaussian; if instead we let $c_i$ decay as $\wh{T}^{-1}$, we obtain the Laplace transform for picket fence statistics. In the former two cases, these limits are directly accessible from the contour integral formula. In the latter case, one must take the residue expansion.

Our main results for $\beta = 2$ consider a natural extension of $\fx^{(\wh{T})} = (\fx^{(\wh{T})}_1,\fx^{(\wh{T})}_2,\ldots)$ across time $\wh{T}$.

\begin{definition}
By an \emph{interpolating process} (that is, interpolating between the Airy point process and picket fence statistics), we mean a process $(\fx^{(\wh{T})})_{\wh{T} > 0} := (\fx_i^{(\wh{T})})_{\wh{T} > 0, i \in \Z_+}$ defined by the joint Laplace transform
\begin{align*}
\begin{multlined}
\E\left[ \prod_{i=1}^m \sum_{j=1}^\infty e^{c_i \fx_j^{(\wh{T}_j)}} \right] \\
= \frac{1}{(2\pi\bi)^m} \oint \cdots \oint \det \left[ \frac{1}{u_i - u_j - c_j} \right]_{i,j=1}^m \prod_{i=1}^m \frac{\Gamma(-u_i-c_i)}{\Gamma(-u_i)} \exp \left[ - \wh{T}_i \left( \frac{c_i(c_i + 1)}{2} + c_i u_i \right) \right] du_i
\end{multlined}
\end{align*}
where $\wh{T}_1 \ge \cdots \ge \wh{T}_m > 0$, $c_1,\ldots,c_m > 0$, the $u_i$-contour $\fU_i$ is a positively oriented contour around $-c_i,-c_i+1,-c_i+2,\ldots$ which starts and ends at $+\infty$, and $\fU_i$ is enclosed by $\fU_j - c_i$, $\fU_j + c_j$ for $j > i$.
\end{definition}

Indeed, \Cref{thm:laplace} implies that the distribution of $(\fx^{(\wh{T})})_{\wh{T} > 0}$ for a fixed time slice $\wh{T}$ is exactly the point process with correlation kernel \eqref{eq:corr_kernel}. Given a vector $U = (u_1,\ldots,u_k)$, we write $\log U := (\log u_1,\ldots,\log u_k)$. We now state the main results for local fluctuations for $\beta = 2$.

\begin{theorem}[$\beta = 2$ Ginibre, Edge Fluctuations] \label{thm:local_cpx_intro}
Suppose $(\by^{(T)})_{T\in\Z_+}$ are the squared singular values of $Y_T$ defined as in \eqref{eq:s_value} where $X_1,X_2,\ldots$ are independent $N\times N$ complex Ginibre matrices. Then
\[ \lim_{N\to\infty} \log\left( \frac{1}{N^{\lfloor N \wh{T} \rfloor + 1}} \by^{(\lfloor N\wh{T} \rfloor)}\right) = \fx^{(\wh{T})} \]
in finite dimensional distributions across time $\wh{T} > 0$.
\end{theorem}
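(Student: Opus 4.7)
The interpolating process is specified by its joint Laplace transform at positive real arguments, so the plan is to compute the corresponding joint exponential observables in the prelimit and match them to the defining formula. With $\xi_j^{(T)} := \log(y_j^{(T)}/N^{T+1})$ and $T_i := \lfloor N\wh{T}_i \rfloor$, the task reduces to showing that
\begin{equation*}
\E \prod_{i=1}^m \sum_{j=1}^N \left( \frac{y_j^{(T_i)}}{N^{T_i+1}} \right)^{c_i}
\end{equation*}
converges as $N \to \infty$, for every $m \in \Z_+$, $\wh{T}_1 \ge \cdots \ge \wh{T}_m > 0$, and $c_1, \ldots, c_m > 0$, to the contour integral on the right-hand side of the formula defining $\fx^{(\wh{T})}$.

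The first task is to derive a contour integral representation for the above observable at finite $N$. At $\beta = 2$, the joint law of $(\by^{(T)})_{T \ge 1}$ from complex Ginibre products is a Schur process (a Macdonald process at $q=t$), so joint power-sum observables at times $T_1 \ge \cdots \ge T_m$ admit a nested $m$-fold contour integral expression obtained by applying Macdonald difference operators to the associated Cauchy-type generating function; the integrand carries factors of the form $[v_i/(v_i+1)]^{c_i(T_i + 1)}$-type weights (reflecting the Laguerre/Ginibre structure and the $N^{T+1}$ rescaling), pole terms like $1/(v_j - v_i - c_i)$ from nested residues, and a Vandermonde-Cauchy factor. Alternatively, the same formula should arise as the Ginibre degeneration ($\wh{M}_T \to \infty$) of the $\beta$-Jacobi joint observables developed earlier in the paper. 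One then performs saddle-point asymptotics with $T_i = \lfloor N\wh{T}_i \rfloor$ via a rescaling $v_i = u_i$ (with an appropriate shift): Stirling's formula converts the factors $[v_i/(v_i+1)]^{c_i(T_i+1)}$ into $\exp[-\wh{T}_i(c_i(c_i+1)/2 + c_i u_i)]$, shifted Pochhammer-type terms become $\Gamma(-u_i-c_i)/\Gamma(-u_i)$, and the Cauchy/pole structure of the prelimit integrand collapses, after careful residue bookkeeping, into the Cauchy determinant $\det[1/(u_i - u_j - c_j)]_{i,j=1}^m$. Contour deformations must be tracked to verify that each limiting $u_i$-contour $\fU_i$ is positively oriented around $-c_i, -c_i + 1, \ldots$ and that the nesting $\fU_i \subset \fU_j - c_i$, $\fU_i \subset \fU_j + c_j$ for $j > i$ is preserved.

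Finally, convergence of these joint Laplace-type observables must be promoted to finite-dimensional convergence of the point process $(\xi_j^{(T_i)})_{j \ge 1,\, i = 1,\ldots,m}$. Since $\sum_j e^{c \xi_j^{(T)}}$ is monotone decreasing in $c > 0$ and dominated by $N e^{c \xi_1^{(T)}}$, uniform tail control on $\xi_1^{(T)}$ (e.g.\ via large-deviation bounds on the top squared singular value of Ginibre products) yields tightness of the truncated point processes, and the family of joint Laplace transforms for $(c_1,\ldots,c_m) \in (0,\infty)^m$ determines the joint law of decreasing particle configurations tending to $-\infty$; this identifies the limit as $\fx^{(\wh{T})}$. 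The principal obstacle is the second step: organizing the prelimit residues to reproduce exactly the Cauchy-determinant and $\Gamma$-ratio structure of the defining formula while maintaining valid, correctly nested contour configurations throughout the $N \to \infty$ limit. A secondary technical task is establishing the tightness via uniform tail bounds on the largest rescaled log-singular-value.
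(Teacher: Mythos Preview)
Your overall strategy matches the paper's: compute joint exponential moments $\E\prod_i \fP_{c_i}(\by^{(T_i)})$ via an $m$-fold contour integral (Theorem~4.2 for Jacobi, degenerated to Ginibre in Proposition~6.4), then take $N\to\infty$ (Theorem~6.1). However, several details in your second paragraph are off.

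First, the prelimit integrand for Ginibre does not carry factors of the form $[v_i/(v_i+1)]^{c_i(T_i+1)}$; that expression belongs to the \emph{global} regime (Theorem~5.1). The Ginibre formula (Proposition~6.4) has instead
\[
\prod_{\ell=1}^N \frac{u_i+\ell-1}{u_i+c_i+\ell-1}\cdot\left(\frac{\Gamma(1-u_i)}{\Gamma(1-u_i-c_i)}\right)^{T_i}.
\]
The asymptotic analysis is not a saddle-point computation: after the shift $u_i\mapsto u_i-N+1$, the $N$-fold rational product becomes $\frac{\Gamma(-u_i-c_i)}{\Gamma(-u_i)}\cdot\frac{\Gamma(N-u_i)}{\Gamma(N-u_i-c_i)}$, and the $T_i$th power of the second ratio is $\bigl({}_{c_i}G_N(u_i)\bigr)^{T_i}$ in the paper's notation. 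Stirling (Lemmas~6.5--6.6) gives ${}_cG_N(u)=\exp\bigl(-\tfrac{c(c+1)}{2(N-u-c)}+O(N^{-2})\bigr)\cdot(1-u/N)^c$, whence the limit $e^{-\wh{T}_i(c_i(c_i+1)/2+c_iu_i)}$. The passage to the limit is by dominated convergence on contours that grow with $N$ (rectangles with right edge near $\Re u=N$), with the uniform bound $|{}_cG_L(u)|\le e^{-c\Re u/L+O(1/L)}$ of Lemma~6.6 supplying the domination; the contribution from the right edge of each rectangle is shown to vanish separately.

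Second, the Cauchy determinant does not arise from ``residue bookkeeping.'' The cross-term in the prelimit formula is already
\[
\prod_{i<j}\frac{(u_j-u_i)(u_j+c_j-u_i-c_i)}{(u_j-u_i-c_i)(u_j+c_j-u_i)},
\]
and combining this with $\prod_i(-c_i)^{-1}$ \emph{is} the Cauchy determinant identity; it is exact at every $N$, not an asymptotic outcome.

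Your third paragraph addresses a point the paper handles only lightly: it simply states that Theorem~6.1 ``gives'' Theorem~1.4, since the interpolating process is \emph{defined} by that Laplace transform formula. Your tightness-plus-identification plan is a reasonable way to make this rigorous, but note that Laplace transforms at positive real arguments do not in general determine a distribution (the paper says as much in the $\beta\ne2$ discussion); for $\beta=2$ one should invoke the existence and determinantal structure of the single-time limits from \cite{LWW} to close the gap, rather than large-deviation bounds on the top singular value.
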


\begin{figure}[h]
    \centering
    \includegraphics[scale=0.38]{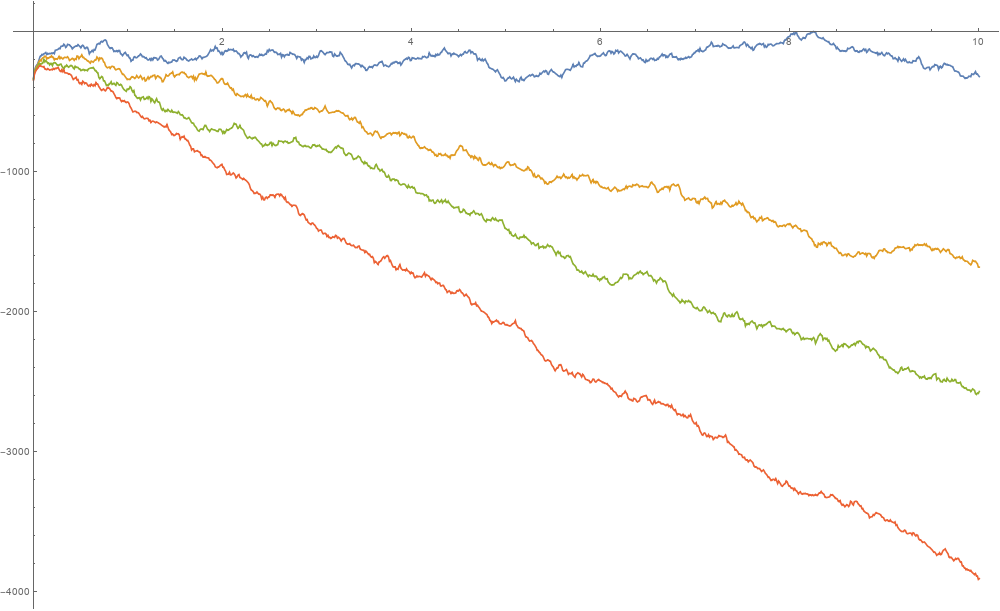}
    \caption{$\log\left( \frac{1}{N^{\lfloor N \wh{T}\rfloor + 1}} y_i^{(\lfloor N \wh{T} \rfloor)} \right)$, $i = 1,2,3,4$, $\wh{T} \in (0,10]$ from a $100 \times 100$ Ginibre matrix}
\end{figure}

\begin{theorem}[$\beta = 2$ Jacobi, Edge Fluctuations] \label{thm:local_cpx_jacobi_intro}
Suppose $(\by^{(T)})_{T \in \Z_+}$ are the squared singular values of $Y_T$ defined as in \eqref{eq:s_value} where $X_1,X_2,\ldots$ are independent $N\times N$ matrices with squared singular values distributed as the ($\beta = 2$) Jacobi ensemble with parameters $\alpha_T := \alpha_T(N), M_T := M_T(N)$ such that
\[ \liminf_{N\to\infty} \left( \inf_{T > 0} M_T/N \right) > 0, \quad \liminf_{N\to\infty} \left( \inf_{T > 0} \alpha_T \right) > 0 \]
and there exists $\gamma:\R_+ \to \R_+$ such that for each $\wh{T} > 0$
\[ \gamma(\wh{T}) = \lim_{N\to\infty} \sum_{\tau = 1}^{\lfloor \wh{T} N \rfloor} \left( \frac{1}{N + \alpha_\tau - 1} - \frac{1}{N + M_\tau + \alpha_\tau - 1} \right) \]
where we assume the right hand side limit exists. Then
\[ \lim_{N\to\infty} \log\left( \left( \frac{1}{N} \prod_{\tau=1}^{\lfloor N \wh{T} \rfloor} \frac{N+M_\tau+\alpha_\tau-1}{N+\alpha_\tau-1} \right) \by^{(\lfloor N \wh{T} \rfloor)} \right) = \fx^{(\gamma(\wh{T}))} \]
in finite dimensional distributions across time $\wh{T} > 0$.
\end{theorem}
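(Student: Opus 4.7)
The plan is to establish convergence of joint Laplace transforms, which characterize the interpolating process $(\fx^{(\wh{T})})_{\wh{T} > 0}$ by definition. Concretely, for $m \geq 1$, $\wh{T}_1 \ge \cdots \ge \wh{T}_m > 0$, and $c_1,\ldots,c_m > 0$, I need to show
\[
\E\!\left[\prod_{i=1}^m \sum_{j=1}^N \bigl(R_i(N)\, y_j^{(\lfloor N\wh{T}_i\rfloor)}\bigr)^{c_i}\right]
\;\xrightarrow[N\to\infty]{}\;
\E\!\left[\prod_{i=1}^m \sum_{j=1}^\infty e^{c_i \fx_j^{(\gamma(\wh{T}_i))}}\right],
\]
where $R_i(N) := N^{-1}\prod_{\tau=1}^{\lfloor N\wh{T}_i\rfloor}(N+M_\tau+\alpha_\tau-1)/(N+\alpha_\tau-1)$. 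As in the Ginibre case (\Cref{thm:local_cpx_intro}), convergence of such exponential linear statistics across distinct times determines finite-dimensional convergence of the associated point processes, so the theorem reduces to this Laplace transform statement.

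The first step is to express the left-hand side as a multi-contour integral. For $\beta = 2$ the normalized Jack functions from \eqref{eq:Jack_relation} specialize to normalized Schur polynomials, so Macdonald-type difference operators applied to the Schur generating function of the $\beta$-Jacobi product process yield a formula
\[
\frac{1}{(2\pi\bi)^m}\oint\cdots\oint \det\!\left[\frac{1}{v_i-v_j-c_j}\right]_{i,j=1}^m \prod_{i=1}^m \cF_N^{(i)}(v_i)\,dv_i,
\]
where $\cF_N^{(i)}(v)$ bundles the rescaling factor $R_i(N)^{c_i}$ together with the Jacobi product $\prod_{\tau=1}^{\lfloor N\wh{T}_i\rfloor} g_\tau(v;c_i)$, each $g_\tau$ being a ratio of four Gamma functions that encodes the single-step Jacobi transition with parameters $(\alpha_\tau, M_\tau)$, and the contours $\fU_i$ are nested as in \Cref{thm:laplace}.

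Next comes the asymptotic analysis. On the scaled contour $v_i = u_i$ with $u_i = O(1)$, each $\log g_\tau(u_i;c_i)$ is Taylor-expanded in $c_i$: the first-order coefficient is essentially $\frac{1}{N+\alpha_\tau-1}-\frac{1}{N+M_\tau+\alpha_\tau-1}$, shifted by amounts of order one absorbed into $u_i$, and the second-order coefficient is the corresponding derivative of this difference. Summing over $\tau$ and combining with the Stirling asymptotics of the Gamma factors arising from $R_i(N)$ yields
\[
\log \cF_N^{(i)}(u_i) \;=\; -c_i u_i\, \gamma_N^{(i)} \;-\; \tfrac{c_i(c_i+1)}{2}\, \gamma_N^{(i)} \;+\; o(1),
\]
where $\gamma_N^{(i)} := \sum_{\tau=1}^{\lfloor N\wh{T}_i\rfloor}\bigl(\tfrac{1}{N+\alpha_\tau-1}-\tfrac{1}{N+M_\tau+\alpha_\tau-1}\bigr)\to \gamma(\wh{T}_i)$ by hypothesis. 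The determinantal kernel $\det[1/(u_i-u_j-c_j)]$ and the ratios $\Gamma(-u_i-c_i)/\Gamma(-u_i)$ (surviving from the prefactor Gammas in $R_i$) persist because the $u_i$ remain of order one, so the limiting integrand matches exactly the one in the definition of $\fx^{(\gamma(\wh{T}))}$.

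The main obstacle is twofold. First, extracting precisely the $\tfrac{c(c+1)}{2}$ combination, including the half-integer shift, requires careful bookkeeping: the quadratic Taylor coefficient of the Jacobi product gives only the leading $c^2/2$ piece, and the remaining $c/2$ correction must be recovered from the subleading discrete-to-continuum correction in the Stirling expansion of the $R_i$-Gammas. Second, upgrading pointwise convergence of the integrand to convergence of the full contour integral requires uniform tail bounds on the deformed contour; the hypotheses $\liminf_N\inf_T M_T/N > 0$ and $\liminf_N\inf_T \alpha_T > 0$ are exactly what keep the poles of each $g_\tau$ uniformly separated from the chosen contour, allowing a dominated convergence argument. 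With these two issues resolved, convergence of the multi-contour integral to the Laplace transform of $(\fx^{(\gamma(\wh{T}_i))})_{i=1}^m$ follows, completing the proof.
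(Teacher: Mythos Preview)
Your approach is essentially the paper's: start from the $\theta=1$ contour-integral moment formula (\Cref{thm:obs_cpx}), shift $u_i\mapsto u_i-N+1$, rewrite each Jacobi factor as a ratio of the functions ${}_cG_L(u)=L^{-c}\Gamma(L-u)/\Gamma(L-u-c)$, do a Stirling-type expansion to extract $-\gamma_i\bigl(\tfrac{c_i(c_i+1)}{2}+c_i u_i\bigr)$, and justify the limit by dominated convergence using the hypotheses on $\alpha_T$ and $M_T/N$.

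Two small corrections to your sketch. First, the ratio $\Gamma(-u_i-c_i)/\Gamma(-u_i)$ does \emph{not} come from $R_i$; the factor $R_i(N)^{c_i}$ is a pure scalar with no $\Gamma$'s. That ratio arises from the finite product $\prod_{\ell=1}^N\frac{u_i+\ell-N}{u_i+c_i+\ell-N}$ already present in the integrand of \Cref{thm:obs_cpx} after the shift: rewriting it via the reflection formula gives $N^{c_i}\,{}_{c_i}G_N(u_i)\,\Gamma(-u_i-c_i)/\Gamma(-u_i)$, and the $N^{-c_i}$ in $R_i^{c_i}$ is exactly what cancels the $N^{c_i}$. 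Second, the prelimit contours are \emph{finite} (they must enclose $-c_i,\ldots,-c_i+N-1$), not the infinite contours of \Cref{thm:laplace}; the paper handles this by taking rectangular contours of width $\sim N$ and showing the far vertical side contributes $o(1)$ via the exponential bound $|{}_cG_L(u)|\le\exp(-c\,\Re u/L+O(1/L))$ from \Cref{lem:GL}. Your ``uniform tail bounds'' remark is the right idea but you should be explicit that this is where the growing part of the contour is killed.
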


\Cref{thm:local_cpx_intro,thm:local_cpx_jacobi_intro} generalize \cite{LWW}*{Theorem 1.2} in two directions. In one direction, we show convergence of the right edge jointly across time instead of a single fixed time. To the best of the author's knowledge, this is the first appearance of the limit process $(\fx^{(\wh{T})})_{\wh{T} > 0}$. In another direction, we establish that this convergence is universal among products of Jacobi ensembles. Our methods also extend to Laguerre ensembles, although we do not state any results in this direction beyond the Ginibre case (note that Ginibre singular values are a special case of the Laguerre ensemble).

We expect that $(\fx^{(\wh{T})})_{\wh{T} > 0}$ is determinantal. We believe that the correlation kernel for $(\fx^{(\wh{T})})_{\wh{T} > 0}$ can be computed via the correlation kernels arising in \cite{BGS} for the truncated unitary product process. However, working with Laplace transforms has the advantage of generalizing to arbitrary $\beta > 0$.

We now move on to the main result for arbitrary $\beta > 0$.

\begin{theorem}[$\beta > 0$, Edge Fluctuations] \label{thm:local_intro}
Let $k_1,\ldots,k_m$ be positive integers. Suppose $(\by^{(T)})_{T \in \Z_+}$ is distributed as the $N$-particle $\beta$-Jacobi ensemble with parameters $\alpha_T := \alpha_T(N), M_T := M_T(N)$ such that
\[ \liminf_{N\to\infty} \left( \inf_{T > 0} M_T/N \right) > 0, \quad \liminf_{N\to\infty} \left( \inf_{T > 0} \alpha_T \right) > 0 \]
and there exists $\gamma:\R_+ \to \R_+$ such that for each $\wh{T} > 0$
\[ \gamma(\wh{T}) = \lim_{N\to\infty} \sum_{\tau = 1}^{\lfloor \wh{T} N \rfloor} \left( \frac{1}{N + \alpha_\tau - 1} - \frac{1}{N + M_\tau + \alpha_\tau - 1} \right) \]
where we assume the right hand side limit exists. Then
\begin{align*}
& \lim_{N\to\infty} \E\left[ \prod_{i=1}^m \left( \frac{1}{N} \prod_{\tau=1}^{\lfloor N \wh{T}_i \rfloor} \frac{N+M_\tau+\alpha_\tau-1}{N+\alpha_\tau-1} \right)^{k_i} \int \! x^{k_i} d\mathrm{m}_{\by^{(\lfloor N \wh{T}_i \rfloor)}}(x) \right] \\
&\quad \quad = \frac{(\beta/2)^{-m}}{(2\pi\bi)^{\sum_{i=1}^m k_i}} \oint \cdots \oint \prod_{\substack{1 \le i,i' \le m, 1 \le j \le k_i, 1 \le j' \le k_{i'} \\ (i,j) < (i,j')}} \frac{(u_{i',j'} - u_{i,j})(u_{i',j'} - u_{i,j} + 1 - \beta/2)}{(u_{i',j'} - u_{i,j} + 1)(u_{i',j'} - u_{i,j} - \beta/2)} \\
& \quad \quad \quad \quad \times \prod_{i=1}^m \left( \prod_{a=1}^{k_i-1} \frac{1}{u_{i,a} - u_{i,a+1} - 1 + \beta/2} \right) \left( \prod_{j=1}^{k_i} \frac{e^{-\gamma(\wh{T}_i)(\beta/2)^{-1} u_{i,j}}}{(\beta/2)^{-1}u_{i,j}} \, du_{i,j} \right)
\end{align*}
where the $u_{i,j}$-contour is positively oriented around $0$, and the $u_{i',j'}$-contour encloses a $\max(\beta/2,1)$-neighborhood of the $u_{i,j}$-contour for $(i,j) < (i',j')$ in lexicographical order.
\end{theorem}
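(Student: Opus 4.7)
My plan is to first derive an exact finite-$N$ contour integral formula for the joint moments
\[\E\prod_{i=1}^m p_{k_i}(\by^{(T_i)}), \qquad p_k(\by) := \sum_\ell y_\ell^k = N \int x^k \, d\mathrm{m}_{\by}(x),\]
and then to analyze it asymptotically as $N \to \infty$. The finite-$N$ formula is to be obtained using the Macdonald/Jack/Heckman--Opdam machinery advertised in the paper's methods section: a power sum $p_{k_i}$ is produced by applying $k_i$ commuting first-order difference operators to the appropriate Jack-Cauchy generating function. Iterating the eigenfunction identity \eqref{eq:Jack_relation} over $\tau$ factorizes
\[\E\bigl[J_\kappa(\by^{(T_i)})/J_\kappa(1^N)\bigr] = \prod_{\tau=1}^{T_i} \E\bigl[J_\kappa(\bx^{(\tau)})/J_\kappa(1^N)\bigr],\]
with each factor explicitly computable from the $\beta$-Jacobi density via Selberg/Kadell-type integrals. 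The combined output is a $\bigl(\sum_i k_i\bigr)$-fold contour integral whose integrand is a product of time-dependent rational factors $\prod_{\tau=1}^{T_i}(v_{i,j}-A_\tau)/(v_{i,j}-B_\tau)$ (one such product per variable $v_{i,j}$), Vandermonde-type interaction terms between distinct variables that already carry $\beta/2$-shifts from operator non-commutation, and an ``edge'' prefactor with a pole at $v = -1$ that is the finite-$N$ analog of the $v/(v+1)$ factor in \Cref{thm:lln}.

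The asymptotic analysis proceeds by rescaling $v_{i,j} = -1 + u_{i,j}/((\beta/2)N)$, zooming in on the pole at $v = -1$ corresponding to the right edge. Under this substitution: (i) the edge prefactor contributes one factor of $v/(v+1)$ per variable, and combined with the rescaling on the left-hand side of the theorem this produces $\prod_j 1/((\beta/2)^{-1}u_{i,j})$; (ii) for each variable, the logarithm of $\prod_{\tau=1}^{T_i}(v-A_\tau)/(v-B_\tau)$ expanded about $v=-1$ splits into a constant piece that, exponentiated, cancels exactly against the normalization factor $\bigl(N^{-1}\prod_\tau (N+M_\tau+\alpha_\tau-1)/(N+\alpha_\tau-1)\bigr)^{k_i}$ on the left-hand side, plus a linear piece
\[-\frac{u_{i,j}}{\beta/2}\sum_{\tau=1}^{\lfloor N\wh T_i \rfloor}\!\left(\frac{1}{N+\alpha_\tau-1}-\frac{1}{N+M_\tau+\alpha_\tau-1}\right)\longrightarrow -\frac{\gamma(\wh T_i)}{\beta/2}\, u_{i,j}\]
by the defining limit of $\gamma$, yielding the exponential $e^{-\gamma(\wh T_i)(\beta/2)^{-1}u_{i,j}}$; (iii) the cross-variable interaction terms, being rational in $v_{i,j}-v_{i',j'} = (u_{i,j}-u_{i',j'})/((\beta/2)N)$, reduce to the stated product of ratios involving $u_{i',j'}-u_{i,j}+1-\beta/2$ and $u_{i',j'}-u_{i,j}-\beta/2$; and (iv) the same-time factor $\prod_a 1/(u_{i,a}-u_{i,a+1}-1+\beta/2)$ is inherited from the commutation relations of the $k_i$ operators acting at a single time slice $T_i$, and is present already at finite $N$.

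The main obstacle will be twofold. Algebraically, obtaining the exact finite-$N$ joint moment formula with the correct nested contour geometry---in particular the $\max(\beta/2,1)$-neighborhood nesting between contours $\fU_{i',j'}$ and $\fU_{i,j}$ for $(i,j) < (i',j')$---requires careful iterated application of the difference operators and residue bookkeeping across the times $T_1 \ge \cdots \ge T_m$, extending the single-time/two-variable nesting implicit in \Cref{thm:clt}. Analytically, interchanging the limit and the $(\sum_i k_i)$-fold integral after the rescaling requires uniform-in-$N$ bounds on the rescaled integrand over a bounded deformation of the contour, and the hypotheses $\liminf_N \inf_\tau \alpha_\tau > 0$ and $\liminf_N \inf_\tau M_\tau/N > 0$ are precisely what is needed to control the quadratic-in-$u$ Taylor error in step (ii) uniformly in $\tau$, so that the term-by-term Riemann sum in fact converges to $\gamma(\wh T_i) u$ after summing $\lfloor N\wh T_i\rfloor$ factors. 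I expect the algebraic part---establishing the exact finite-$N$ joint moment formula with the right $\beta/2$-shifted nesting---to be the harder of the two.
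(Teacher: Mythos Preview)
Your proposal is correct and follows essentially the same approach as the paper: establish the exact finite-$N$ joint moment formula (this is Theorem~\ref{thm:obs}, whose derivation is indeed the substantial algebraic step) and then localize at the right edge and pass to the limit. The paper's execution of the asymptotic step is slightly cleaner than what you outline: instead of first passing to the rescaled variable $v$ and then zooming in via $v=-1+u/((\beta/2)N)$, it simply shifts $u_{i,j}\mapsto u_{i,j}-\theta N$ directly in Theorem~\ref{thm:obs}, which leaves the $\cB$ and $\cC$ factors (depending only on differences) exactly intact and reduces the analysis to the uniform-on-compacts convergence of the remaining factor on fixed bounded contours---so no Taylor-remainder or dominated-convergence bookkeeping is needed.
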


We expect the right hand side of \Cref{thm:local_intro} describes the Laplace transforms of finite dimensional distributions of an interpolating process which is a $\beta$-deformation of the $\beta = 2$ case. However, it is not clear that the right hand side determines a limiting process since the moment problem is indeterminate. Despite the indeterminacy, our result implies tightness.

\begin{corollary}
For $(\by^{(T)})_{T \in \Z_+}$ distributed as in \Cref{thm:local_intro}, the finite dimensional distributions of
\begin{align} \label{eq:e_jacobi}
\log\left( \left( \frac{1}{N} \prod_{\tau=1}^{\lfloor N \wh{T} \rfloor} \frac{N+M_\tau+\alpha_\tau-1}{N+\alpha_\tau-1} \right) \by^{(\lfloor N \wh{T} \rfloor)} \right)
\end{align}
are tight in $N$.
\end{corollary}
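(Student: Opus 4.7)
My plan is to extract uniform exponential-moment bounds on the rescaled particles from \Cref{thm:local_intro} and then invoke Markov's inequality to derive tightness of the number of particles above any given level, which is the standard criterion for vague tightness of point configurations on $\R$.

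Write $x_j^{(\wh T)} := \log\!\bigl(C_N(\wh T)\, y_j^{(\lfloor N\wh T\rfloor)}\bigr)$ with $C_N(\wh T) := \tfrac{1}{N}\prod_{\tau=1}^{\lfloor N\wh T\rfloor}\tfrac{N+M_\tau+\alpha_\tau-1}{N+\alpha_\tau-1}$. The space of locally finite counting measures on $\R$ is Polish in the vague topology, and joint tightness in a finite product of Polish spaces reduces to tightness of each marginal. Thus it suffices to show, for each fixed $\wh T > 0$, that the random configuration $\{x_j^{(\wh T)}\}_{j=1}^N$ is vaguely tight; by a standard criterion, this is equivalent to tightness in $N$ of $\#\{j : x_j^{(\wh T)} > A\}$ for every $A \in \R$.

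The first step is to specialize \Cref{thm:local_intro} to $m=1$ and $k_1 = k$ to obtain the identification
\[
C_N(\wh T)^k \int x^k\, d\mathrm{m}_{\by^{(\lfloor N\wh T\rfloor)}}(x) \ =\ \tfrac{1}{N}\sum_{j=1}^{N} e^{k x_j^{(\wh T)}},
\]
whose expectation converges, as $N\to\infty$, to the finite contour integral on the right-hand side of that theorem. After the corresponding bookkeeping that ties this normalized empirical moment to the unnormalized power sum $\sum_j e^{k x_j^{(\wh T)}}$ (which amounts to tracing the factors of $N$ through \Cref{thm:local_intro}), this yields the uniform exponential-moment bound
\[
\sup_{N\geq 1}\ \E\Bigl[\sum_{j=1}^{N} e^{k x_j^{(\wh T)}}\Bigr] \ <\ \infty \qquad (k\in\Z_+).
\]
Second, I apply the elementary inequality $\mathbf{1}_{\{x > A\}} \le e^{k(x - A)}$, valid for every $k > 0$, and sum over particles to get $\#\{j : x_j^{(\wh T)} > A\} \le e^{-kA}\sum_j e^{k x_j^{(\wh T)}}$. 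Taking expectations and invoking the bound above produces a constant $C_k < \infty$ with $\sup_N \E\#\{j : x_j^{(\wh T)} > A\} \le C_k\, e^{-kA}$. For any $\varepsilon > 0$ and $M \in \N$, choosing first $k$ and then $A$ so that $C_k e^{-kA}/M < \varepsilon$ and applying Markov yields $\sup_N \P\bigl(\#\{j : x_j^{(\wh T)} > A\} \ge M\bigr) < \varepsilon$, the desired tightness. Combining these single-time statements across finitely many times $\wh T_1,\dots,\wh T_p > 0$ via the product-Polish argument above gives joint tightness and completes the proof.

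The only real content is the first step: isolating the unnormalized exponential statistic $\sum_j e^{k x_j^{(\wh T)}}$ from \Cref{thm:local_intro} and confirming that the resulting moment bound is $O(1)$ rather than $O(N)$ uniformly in $N$ (otherwise the Markov argument would be vacuous). Once this is verified, the remaining steps are routine and present no serious obstacle.
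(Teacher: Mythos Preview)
Your approach is correct and is precisely the argument the paper has in mind when it says ``our result implies tightness'' without further elaboration: bound $\E\sum_j e^{k x_j^{(\wh T)}}$ uniformly in $N$ via the moment formula, then apply Markov's inequality to control $\#\{j: x_j^{(\wh T)} > A\}$, which yields vague tightness of the point configuration.

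Your one stated concern---whether the unnormalized exponential statistic is $O(1)$ rather than $O(N)$---is resolved by looking at the paper's Section~6 reformulation. There the working statement is \Cref{thm:local} (Theorem~6.3), whose left-hand side is
\[
\E\Biggl[\prod_{i=1}^m C_N(\wh T_i)^{k_i}\,\fP_{k_i}\bigl(\by^{(T_i)}\bigr)\Biggr]
\]
with $\fP_k(\by)=\sum_j y_j^k$ the \emph{unnormalized} power sum, and whose limit is the finite contour integral you quote. Taking $m=1$ gives exactly $\sup_N \E\bigl[\sum_j e^{k x_j^{(\wh T)}}\bigr]<\infty$, so the Markov step goes through as written. (Comparing with the introduction's \Cref{thm:local_intro}, which uses $\int x^{k_i}\,d\mathrm m_{\by}$ rather than $\fP_{k_i}(\by)$, there is a stray factor of $N^m$ between the two statements; the proof in Section~6 makes clear that the unnormalized version is the one that holds, and that is what you need.) Once this is in hand, the remaining steps---the inequality $\mathbf 1_{\{x>A\}}\le e^{k(x-A)}$, the reduction of joint tightness to marginal tightness in a product of Polish spaces, and the identification of vague tightness with tightness of half-line counts---are routine, exactly as you say.
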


We conjecture that \eqref{eq:e_jacobi} converges to a universal limit process $(\fx^{(\wh{T},\beta)})_{\wh{T} > 0}$ with the property that $\wh{T} \to 0$ yields the $\beta$-Airy point process and $\wh{T} \to +\infty$ yields a Gaussian limit under proper rescaling. Assuming this conjecture, the right hand side of \Cref{thm:local_intro} gives formulas for Laplace transform of the conjectural point process
\[ \E \left[ \prod_{i=1}^m \sum_{j=1}^\infty e^{k_i \fx_i^{(\gamma(\wh{T}),\beta)}} \right] \]
evaluated at positive integer values $(k_1,\ldots,k_m) \in (\Z_+)^m$. In addition, this conjecture implies that if $k = k_1 = \cdots = k_m$ are taken to grow on the order $\wh{T}^{-2/3}$ as $\wh{T} \to 0$, then we should obtain the Laplace transform of the $\beta$-Airy point process (see \cite{GSh}) in the limit.

\subsection{Method} \label{sec:intro:method}

Our method uses the formalism of Macdonald processes which are stochastic processes with special algebraic properties inherited from the Macdonald symmetric functions. In particular, certain observables of Macdonald processes are accessible through operators which diagonalize the Macdonald symmetric functions. Finding suitable representations for the action of these operators, such as contour integral formulas, provides access to asymptotics of these processes. This idea was pioneered by \cite{BC} and further extended in \cites{BG1,GZ,Ahn1}. In this work, we establish that the $\beta$-Jacobi product process can be realized as a limit of Macdonald processes. Through the Macdonald process formalism, we obtain moment formulas for the $\beta$-Jacobi product process. We are thus able to access global limits for constant order $T$ and local limits for $T$ of order $N$.

The connection between Macdonald processes and $\beta$-ensembles was first observed by Borodin-Gorin \cite{BG1}. They showed that the Heckman-Opdam limit (see \Cref{sec:background}) of certain Macdonald processes produce the $\beta$-Jacobi corners process --- a multilevel extension of the $\beta$-Jacobi ensemble. Using formulas for observables of Macdonald processes, they accessed the global fluctuations of the $\beta$-Jacobi corners process. On the other hand, Borodin-Gorin-Strahov \cite{BGS} recently established that squared singular values of products of truncated unitary matrices ($\beta = 2$ in our model) are a limit of Schur processes (a special case of Macdonald processes). We unify these two connections, finding that the $\beta$-Jacobi product process is a limit of Macdonald processes, thus obtaining observables for the product process. We note that the observables we use are derived from a different set of operators than that of \cite{BG1}. The operators we use were introduced by Negut \cite{N} in an algebraic setting, reexpressed as contour integral formulas by Gorin-Zhang \cite{GZ} for the asymptotic analysis of the $\beta$-Jacobi corners process, and further applied to a broad class of Macdonald processes by the author \cite{Ahn1}. These operators have the advantage of giving exact moment formulas of Macdonald processes and their degenerations, thus are useful tools in streamlining asymptotic analyses.

The $\beta$-Jacobi product process is constructed by generalizing the notion of matrix products with right $\U_\beta$ symmetry. Expanding on our discussion earlier, this generalization was introduced by Gorin-Marcus in \cite{GM} where they considered measures $\sigma_{\bx,\by}^\beta$ with total mass $1$, determined by
\[ \frac{J_\kappa(\bx;\beta/2)}{J_\kappa(1^N;\beta/2)} \frac{J_\kappa(\by;\beta/2)}{J_\kappa(1^N;\beta/2)} = \int \frac{J_\kappa(\bu;\beta/2)}{J_\kappa(1^N;\beta/2)} \, d\sigma_{\bx,\by}^\beta(\bu), \quad \quad \kappa \in \Y_N \]
for fixed $\bx,\by \in \R_{\ge 0}^N$. For $\beta = 1,2,4$, $\sigma_{\bx,\by}^\beta$ coincides with the distribution of squared singular values of $\mathrm{diag}(\by) U \mathrm{diag}(\bx)$ where $U$ is Haar $\U_\beta$ distributed. However, $\sigma_{\bx,\by}^\beta$ are only known to be signed measures for general $\beta > 0$. Through the connection between $\beta$-Jacobi ensembles and Macdonald processes, we establish that if we integrate $\sigma_{\bx,\by}^\beta$ in $\by$ with respect to a $\beta$-Jacobi ensemble, the resulting measure is a probability measure. This is precisely our result \Cref{thm:existence} which states that if $\by$ is distributed as a $\beta$-Jacobi ensemble, the random operation $\bx \boxtimes_\beta \by$ is well-defined. Our proof relies on describing the Markov transition kernels of the $\beta$-Jacobi product process in terms of expectations of Jack symmetric functions and demonstrating convergence of analogous expectations on the side of Macdonald processes. These prelimit Markov transition kernels preserve the Macdonald process structure. This is the key point which allows us to realize $\beta$-Jacobi product processes as limits of Macdonald processes.

With this limit relation established, we obtain moment formulas for the $\beta$-Jacobi product process through formulas for moments of Macdonald processes. Global and local fluctuations are obtained by the moment's method. The global asymptotic analysis uses techniques related to the approaches of \cites{BG1,GZ,Ahn1}. Our method of accessing fluctuations of the right edge (as $T,N \to\infty$) relies on our observation that as $T$ grows, moments of a fixed order are dominated by the largest particles. As a comparison, to access fine asymptotics of the largest particles for Wigner matrices via moments, the moments must be taken of order $N^{2/3}$ where $N$ is the size of the matrix; in fact, the universality of the Airy point process at the spectral edge of Wigner matrices was first shown by Sinai-Soshnikov \cite{SS} and Soshnikov \cite{Sosh} using high order moments. For further background on the moment's method, see the survey \cite{Sod} and references therein.

Currently, our methods are only applicable for regimes where the moments are of constant order, with the exception of $\beta = 2$. For example, we cannot access local edge fluctuations for finite $T$ as this would require the moments to grow on the order $N^{2/3}$. The main obstruction for a growing order of moments is that our moment formulas are given by contour integrals whose dimensions match the order of the moments. At present, we are unable to control the asymptotics of these contour integrals if the dimension is increasing. When $\beta = 2$, the formulas simplify and dimensions reduce. This is the reason why our local tightness results for arbitrary $\beta > 0$ are upgraded to full convergence results for $\beta = 2$. In fact, for $\beta = 2$, the dimensions of the moment formulas are stable enough that any $T,N$ regime should be analyzable via our approach.

The remainder of this article is organized as follows. In \Cref{sec:background}, we provide the necessary background on Macdonald symmetric, Jack symmetric and Heckman-Opdam hypergeometric functions along with references for additional study. We detail the connections between these functions and the $\beta$-Jacobi product process in \Cref{sec:process}, and obtain moment formulas in \Cref{sec:moments} by passing through the Macdonald process formalism. In the remaining \Cref{sec:global,sec:local}, we prove our main theorems on global and local fluctuations respectively via asymptotic analysis of the moment formulas.

\section*{Acknowledgments}
I would like to thank my advisor Vadim Gorin for suggesting the study of matrix products in orthogonal and symplectic symmetry classes, for useful discussions, and giving feedback on several drafts. The author was partially supported by National Science Foundation Grant DMS-1664619.

\section{Preliminaries on Special Functions} \label{sec:background}

\subsection{Symmetric Functions}
Let $\Y$ denote the set of partitions. We represent $\lambda \in \Y$ by the nondecreasing sequence $(\lambda_1,\lambda_2,\ldots)$ of its parts. Let $\ell(\lambda) := \#\{i \ge 1: \lambda_i \ne 0\}$ and $|\lambda| := \sum_{i \ge 1} \lambda_i$. Denote by $\Lambda$ the algebra over $\C$ of symmetric functions in countably many variables $x_1,x_2,\ldots$. Define $p_0 := 1$ and
\[ p_k := \sum_{i \ge 1} x_i^k, ~~~k \in \Z_+, \quad \quad \quad \quad p_\lambda := \prod_{i = 1}^{\ell(\lambda)} p_{\lambda_i}, ~~~\lambda \in \Y. \]
Then $\{p_\lambda\}_{\lambda \in \Y}$ forms a linear basis of $\Lambda$. Fixing $0 < q,t < 1$, we have the scalar product
\[ \langle p_\lambda, p_\mu \rangle_{(q,t)} = \delta_{\lambda\mu} \prod_{i=1}^{\ell(\lambda)} \frac{1 - q^{\lambda_i}}{1 - t^{\lambda_i}} \prod_{i=1}^\infty i^{m_i(\lambda)} m_i(\lambda)! \]
where $m_i(\lambda)$ is the multiplicity of $i$ in $\lambda$. We drop the subscript $(q,t)$ when the dependence is clear.

The \textit{Macdonald symmetric functions} $\{P_\lambda(X;q,t)\}_{\lambda \in \Y}$ are the uniquely defined family of homogeneous symmetric functions satisfying
\[ \langle P_\lambda(X;q,t), P_\mu(X;q,t) \rangle = 0 \]
for $\lambda \neq \mu$ such that the leading monomial of $P_\lambda$ is $x_1^{\lambda_1} x_2^{\lambda_2} \cdots$ with respect to lexicographical ordering of the powers $(\lambda_1,\lambda_2,\ldots)$. This implies that $\{P_\lambda(X;q,t)\}_{\lambda \in \Y}$ forms a linear basis for $\Lambda$ with dual basis $\{ Q_\lambda(X;q,t) \}_{\lambda \in \Y}$ where $Q_\lambda(X;q,t)$ is a multiple of $P_\lambda(X;q,t)$. Given $\mu,\nu \in \Y$, $P_\mu(X;q,t)P_\nu(X;q,t)$ expands in the basis $\{P_\lambda(X;q,t)\}_{\lambda \in \Y}$ such that the coefficient of $P_\lambda(X;q,t)$ is
\[ \langle P_\mu(X;q,t) P_\nu(X;q,t), Q_\lambda(X;q,t) \rangle. \]
This coefficient is nonzero only if
\[ |\mu| + |\nu| = |\lambda|, \quad \quad \mu,\nu \subset \lambda \]
where $\mu \subset \lambda$ means
\[ \mu_i \le \lambda_i, \quad i \ge 1, \]
see \cite{Mac}*{Chapter VI (7.4)} for details.

The \emph{skew Macdonald symmetric functions} $P_{\lambda/\mu}(X;q,t), Q_{\lambda/\mu}(X;q,t)$ are defined by
\begin{align*}
\begin{split}
\langle P_{\lambda/\mu}(X;q,t), Q_\nu(X;q,t) \rangle & = \langle P_\lambda(X;q,t), Q_\mu(X;q,t) Q_\nu(X;q,t) \rangle, \quad \nu \in \Y, \\
\langle Q_{\lambda/\mu}(X;q,t), P_\nu(X;q,t) \rangle & = \langle Q_\lambda(X;q,t), P_\mu(X;q,t) P_\nu(X;q,t) \rangle, \quad \nu \in \Y.
\end{split}
\end{align*}
Then $P_{\lambda/\mu}(X;q,t) \ne 0$ only if $\lambda \supset \mu$, and likewise for $Q_{\lambda/\mu}(X;q,t)$.

Let $\Y_N$ denote the set of partitions of length $\le N$, $\Lambda_N$ the algebra over $\C$ of symmetric polynomials in $N$-variables $x_1,\ldots,x_N$ and $\pi_N: \Lambda \to \Lambda_N$ the restriction homomorphism which effectively takes $x_{N+1} = x_{N+2} = \cdots = 0$. Then
\[ P_\lambda(x_1,\ldots,x_N;q,t) := \pi_N P_\lambda(X;q,t) \]
for $\lambda \in \Y_N$ form a basis for $\Lambda_N$. We have
\[ a_1,\ldots,a_N \ge 0 ~~~\implies~~~ P_{\lambda/\mu}(a_1,\ldots,a_N;q,t) \ge 0; \]
see \cite{Mac}*{Chapter VI,(7.9') \& (7.14')} for the ingredients to prove this nonnegativity. For formal variables $X = (x_1,x_2,\ldots)$ and $Y = (y_1,y_2,\ldots)$, we have the identity
\begin{align} \label{eq:cauchy1}
\sum_{\lambda \in \Y} P_\lambda(X;q,t) Q_\lambda(Y;q,t) = \prod_{i,j=1}^\infty \frac{(tx_iy_j;q)_\infty}{(x_iy_j;q)_\infty} =: \Pi(X,Y;q,t)
\end{align}
where $(a;q)_\infty := \prod_{i \ge 0} (1 - aq^i)$, see \cite{Mac}*{Chapter VI,(4.13)}. Replacing $X$ with $(a_1,\ldots,a_M,0,\ldots)$ and $Y$ with $(b_1,\ldots,b_N,0,\ldots)$ such that $\sup_{i,j} |a_ib_j| < 1$, the sum on the left converges and the identity holds analytically. From \cite{Mac}*{Chapter VI, Section 7.4}, we have
\begin{align} \label{eq:branch}
P_{\lambda/\nu}(X,Y;q,t) & = \sum_{\mu \in \Y} P_{\lambda/\mu}(X;q,t) P_{\mu/\nu}(Y;q,t), \\ \label{eq:flipid}
\sum_{\lambda \in \Y} P_\lambda(X) Q_{\lambda/\mu}(Y) &= \Pi(X,Y;q,t) P_\mu(X).
\end{align}

Define the normalized Macdonald symmetric polynomials
\[ \wh{P}_\lambda(x_1,\ldots,x_N;q,t) = \frac{P_\lambda(x_1,\ldots,x_N;q,t)}{P_\lambda(1,t,\ldots,t^{N-1};q,t)}, ~~~\lambda \in \Y_N \]
which satisfy the label-variable symmetry \cite{Mac}*{Chapter VI, (6.6)} for $\lambda,\mu \in \Y_N$
\begin{align} \label{eq:varind1}
\wh{P}_\lambda(q^{\mu_1} t^{N-1},q^{\mu_2} t^{N-2},\ldots,q^{\mu_N};q,t) = \wh{P}_\mu(q^{\lambda_1} t^{N-1},q^{\lambda_2} t^{N-2},\ldots,q^{\lambda_N};q,t).
\end{align}
Since the normalized Macdonald symmetric functions form a  linear basis for the algebra of symmetric functions, there exist $c^\lambda_{\mu\nu}(\wh{P};q,t)$ satisfying
\[ \wh{P}_\mu(x_1,\ldots,x_N;q,t) \wh{P}_\nu (x_1,\ldots,x_N;q,t) = \sum_\lambda c^\lambda_{\mu\nu}(\wh{P};q,t) \wh{P}_\lambda (x_1,\ldots,x_N;q,t). \]

\subsection{Jack Symmetric and Heckman-Opdam Hypergeometric Functions}
We describe two degenerations of the Macdonald symmetric polynomials. For $\theta > 0$, the Jack symmetric polynomials are
\[ J_\lambda(x_1,\ldots,x_N;\theta) := \lim_{q \to 1} P_\lambda(x_1,\ldots,x_N;q,q^\theta), \quad \lambda \in \Y_N, \]
see \cite{Mac}*{Chapter VI, Section 10}. Let $\wh{J}_\lambda$ denote the limit obtained by replacing $P_\lambda$ with $\wh{P}_\lambda$.

For $\theta > 0$ and $\br = (r_1 > \cdots > r_N > 0 = r_{N+1} = \cdots = r_M)$, define the \emph{Heckman-Opdam hypergeometric function} (for Type A root systems \cites{HO87I,HO87II,HS,Opd88III,Opd88IV}) by
\begin{align*}
t = q^\theta, \quad q &= \exp(-\e), \quad \lambda = \lfloor \e^{-1} (r_1,\ldots,r_N) \rfloor, ~~~ x_i = \exp( \e z_i), \\
\cF_{\br}(z_1,\ldots,z_M;\theta) & := \lim_{\e \to 0} \e^{\theta N(N-1)/2 + N(M-N)} P_\lambda(x_1,\ldots,x_M;q,t),
\end{align*}
and its renormalizations
\begin{align*}
\wt{\cF}_{\br}(z_1,\ldots,z_M;\theta) & := \lim_{\e \to 0} \e^{N(\theta-1) + \theta N(N-1)/2 + N(M-N)} Q_\lambda(x_1,\ldots,x_M;q,t), \\
\wh{\cF}_{\br}(z_1,\ldots,z_M;\theta) & := \lim_{\e \to 0} \wh{P}_\lambda(x_1,\ldots,x_M;q,t),
\end{align*}
see \cite{BG1}*{Propositions 6.4 and 6.5} for details. We have
\[ \wh{\cF}_{\br}(z_1,\ldots,z_N) = \frac{\cF_{\br}(z_1,\ldots,z_N)}{\cF_{\br}(0,-\theta,-2\theta,\ldots,-(N-1)\theta)} \]
which extends the definition of $\wh{\cF}_{\br}$ for $\br = (r_1 \ge \cdots \ge r_N)$.

Let
\[ \cR^N := \{\br \in \R_{\ge 0}^N: r_1 \ge \cdots \ge r_N \}, \quad \cU^N := \cR^N \cap [0,1]^N. \]
We consider $\cR^N$ as a subset of $\cR^M$ for $M \ge N$ by identifying $\vec{r} \in \cR^N$ with $(r_1,\ldots,r_N,0,\ldots,0) \in \cR^M$.

The identity (\ref{eq:cauchy1}) implies
\begin{align} \label{eq:cauchy2}
\begin{multlined}
\int_{\cR^{\min(N,M)}} \wt{\cF}_{\br}(a_1,\ldots,a_N;\theta) \cF_{\br}(b_1,\ldots,b_M) \prod_{i=1}^{\min(N,M)} dr_i \\
= \prod_{i=1}^N \prod_{j=1}^M \frac{\Gamma(-a_i - b_j)}{\Gamma(\theta - a_i - b_j)}
=: H((a_1,\ldots,a_N),(b_1,\ldots,b_M);\theta)
\end{multlined}
\end{align}
for $a_1,\ldots,a_N,b_1,\ldots,b_M$ such that $\Re(a_i + b_j) < 0$ for all $i \in [[1,N]],j \in [[1,M]]$ (see \cite{BG1}*{Proposition 6.6}). The variable-index symmetry (\ref{eq:varind1}) implies
\begin{align} \label{eq:varind2}
\wh{\cF}_{\br}(-\lambda_1 - (N-1)\theta, -\lambda_2 - (N-2)\theta,\ldots, -\lambda_N;\theta) = \wh{J}_\lambda(\exp(-r_1),\exp(-r_2),\ldots,\exp(-r_N);\theta)
\end{align}
for $\br \in \cR^N$ and $\lambda \in \Y_N$ (see \cite{GM}*{Section 2}).

We also have
\begin{align*}
\wh{J}_\mu(x_1,\ldots,x_N;\theta) \wh{J}_\nu(x_1,\ldots,x_N;\theta) &= \sum_\lambda c_{\mu\nu}^\lambda(\wh{J};\theta) \wh{J}_\lambda(x_1,\ldots,x_N;\theta) \\
\wh{\cF}_{\vec{\ell}}(z_1,\ldots,z_N;\theta) \wh{\cF}_{\vec{r}}(z_1,\ldots,z_N;\theta) &= \int_{\vec{s}} c_{\vec{\ell},\vec{r}}^{\vec{s}}(\wh{\cF};\theta) \wh{\cF}_{\vec{s}}(z_1,\ldots,z_N;\theta) 
\end{align*}
where
\begin{align} \label{eq:LRconv}
c_{\mu\nu}^\lambda(\wh{P};q,t) \to c_{\mu\nu}^\lambda(\wh{J};\theta), \quad t = q^\theta \to 1
\end{align}
and $c_{\vec{\ell},\vec{r}}^{\vec{s}}(\wh{\cF};\theta)$ is a (possibly signed) measure in $\vec{s}$ with total mass $1$ supported in the set of $\vec{s} \in \cR^N$ satisfying
\[ s_1 + \cdots + s_N = (r_1 + \ell_1) + \cdots (r_N + \ell_N), \quad \quad r_N + \ell_N \le s_i \le r_1 + \ell_1, \quad 1 \le i \le N, \]
see \cite{GM}*{Section 2}. For $\theta = 1/2,1,2$, the measures $c_{\vec{\ell},\vec{r}}^{\vec{s}}$ are nonnegative and therefore probability measures.

\section{\texorpdfstring{$\beta$}{beta}-Jacobi Product Process} \label{sec:process}

In this section, we describe the connection between the special functions from \Cref{sec:background} and $\beta$-Jacobi product processes. The main result of this section (\Cref{thm:mactoprod}) realizes $\beta$-Jacobi product processes as limits of \emph{Macdonald processes} --- Markov chains with distributions exhibiting a special structure in terms of Macdonald symmetric functions. We first find a description for the transition kernels of $\beta$-Jacobi product processes for $\beta = 1,2,4$, then extrapolate to arbitrary $\beta > 0$. The results in this section imply \Cref{thm:existence}, see \Cref{prop:bmarkov} and the following discussion.

Let $\alpha > 0$ and $M,N \in \Z_+$ throughout this section.

\begin{definition}
Fix $\beta \in \{1,2,4\}$. Suppose $X,Y$ are random, independent, $N\times N$ random matrices with right $\U_\beta$-invariant distributions. Let $\bx,\by \in \cR^N$ be the respective random squared singular values of $X,Y$ and
\[ \bx \boxtimes_\beta \by \]
denote the squared singular values of $XY$.
\end{definition}

\begin{definition} 
Let $\theta > 0$. Denote by $\P_\theta^{\alpha,M,N}$ the measure on $N$-particles
\[ \bx = (\underbrace{1,~\ldots,~1,}_{N - \min(M,N)} x_1,\ldots,x_{\min(M,N)}) \in \cU^N \]
with density proportional to
\[ \prod_{1 \le i < j \le \min(M,N)} |x_i - x_j|^{2\theta} \prod_{i=1}^{\min(M,N)} x_i^{\theta \alpha - 1} (1 - x_i)^{\theta|M-N| + \theta - 1} \, dx_i. \]
\end{definition}

This is the $\beta$-Jacobi ensemble with $\beta = 2\theta$.

\begin{remark}
The $\beta$-Jacobi ensemble can be defined for $M \in \R_+$ if $M \ge N$. Although the discussion in this section requires $M \in \Z^+$, the formulas in \Cref{sec:moments} can be extended to real $M \ge N$ by analytic continuation.
\end{remark}

For $\theta = 1/2,1,2$, this is the distribution of the squared singular values of a truncated Haar $\U_{2\theta}$ matrix.

\begin{proposition}[{\cite{For10}*{Proposition 3.8.2}}] \label{prop:U_beta}
Let $\beta \in \{1,2,4\}$, $L, N',N \in \Z_+$ such that $L \ge N' \ge N$, and $U$ be random Haar $\U_\beta(L)$ distributed. If $X$ is an $N'\times N$ submatrix of $U$ and
\[ \alpha = N' - N + 1, \quad M = L - N', \]
then the squared singular values of $X$ are $\P_{\beta/2}^{\alpha,M,N}$ distributed.
\end{proposition}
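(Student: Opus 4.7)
The approach is to derive the joint density of the squared singular values by parametrizing the Haar measure on $\U_\beta(L)$ via the block structure of $U$. Write
\[ U = \begin{pmatrix} X & Y \\ Z & W \end{pmatrix} \]
where $X$ is the upper-left $N' \times N$ block, $Y$ is $N' \times (L-N)$, $Z$ is $(L-N') \times N$, and $W$ is $(L-N') \times (L-N)$. The unitarity relation $U^*U = I_L$ restricted to the first $N$ columns yields $X^*X + Z^*Z = I_N$, so the squared singular values of $X$ automatically lie in $[0,1]$; this identity is also the structural source of the $(1-x_i)$-factors in the Jacobi density.

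The first step is to exploit the invariance of the Haar measure under left multiplication by $\mathrm{diag}(V_1^*, I_{L-N'})$ for $V_1 \in \U_\beta(N')$ and right multiplication by $\mathrm{diag}(V_2, I_{L-N})$ for $V_2 \in \U_\beta(N)$. This allows one to diagonalize $X$ via its SVD $X = V_1 D V_2^*$ and absorb $V_1, V_2$ into $U$, so that the density of the squared singular values is obtained as a pushforward under the map $U \mapsto D^2$. The change of variables from the matrix entries of $X$ to $(V_1, D, V_2)$ produces the Vandermonde factor $\prod_{i<j}|x_i - x_j|^\beta$ via the standard $\beta$-ensemble Jacobian.

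Next, I would condition on the singular values and integrate over the remaining blocks $Y, Z, W$ subject to the unitarity constraints. The relation $Z^*Z = I_N - D^2$ restricts $Z$ to a Stiefel-like variety over $\F_\beta$, and the invariant integration over this variety produces a matrix Beta-type integral contributing $\prod_i x_i^{(\beta/2)\alpha - 1}$ with $\alpha = N' - N + 1$. The analogous constraint $YY^* = I_{N'} - XX^*$ together with the remaining degrees of freedom in $W$ (which is subject to being completed to a unitary) yields $\prod_i (1-x_i)^{(\beta/2)(|M-N|+1) - 1}$ with $M = L - N'$. The exponents fall out of counting real dimensions of the relevant Stiefel manifolds over $\F_\beta$, which naturally introduces the $\beta/2$ factors.

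The main obstacle is the careful bookkeeping in the Jacobian and Stiefel volume computations, especially for $\beta = 1$ and $\beta = 4$, where the non-abelian structure of the orthogonal and symplectic groups requires a judicious choice of local coordinates and dimension counting; the quaternionic case is typically handled via the standard $2\times 2$ complex matrix representation of quaternions. The complete calculation is classical and is carried out in detail in \cite{For10}*{Proposition 3.8.2}, from which this proposition is quoted directly.
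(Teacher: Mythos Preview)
The paper itself supplies no proof for this proposition: it is quoted directly from \cite{For10}*{Proposition 3.8.2} and treated as a known input. Your proposal correctly recognizes this, and the sketch you give (SVD change of variables producing the Vandermonde, followed by integration over the constrained blocks via Stiefel-manifold volumes to produce the $x_i^{(\beta/2)\alpha-1}(1-x_i)^{(\beta/2)(|M-N|+1)-1}$ factors) is indeed the standard derivation carried out in Forrester's reference.
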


\begin{remark}
At first glance, the realization of the $\beta$-Jacobi ensemble as the squared singular values of a \emph{rectangular} truncated Haar $\U_\beta$-matrix may appear incompatible with our definition of $\bx \boxtimes_\beta \by$ involving \emph{square} matrices $X$ and $Y$. However, it can be shown that these two matrix interpretations are compatible. More specifically, if the distributions of $\bx^{(1)},\ldots,\bx^{(T)}$ are $\beta$-Jacobi ensembles with certain parameters, then the distribution of $\bx^{(1)} \boxtimes \cdots \boxtimes \bx^{(T)}$ can be identified with that of the squared singular values of a product of $T$ rectangular, truncated Haar $\U_\beta$ matrices. We provide the details of this identification in \Cref{sec:matrix}.
\end{remark}

The density of a $\beta$-Jacobi ensemble can also be given in terms of Heckman-Opdam hypergeometric functions. The following proposition is a consequence of a general convergence statement \cite{BG1}*{Theorem 2.8} for $\beta$-Jacobi \emph{corners} processes --- multilevel extensions of $\beta$-Jacobi ensembles.

We adopt the following notation for brevity: for any $\mu \in \Y_N$, set
\begin{align*}
\rho_\mu^N := \rho_\mu^N(q,t) &:= (q^{\mu_1} t^{N - 1}, q^{\mu_2} t^{N - 2},\ldots,q^{\mu_N}),\\
\varrho_\mu^N := \varrho_\mu^N(\theta) &:= -(\mu_1 + \theta(N-1),\mu_2 + \theta(N-2),\ldots, \mu_N), \\
(a_1,\ldots,a_N) + c & := (a_1 + c,\ldots,a_N + c) \quad \mbox{for} \quad a_1,\ldots,a_N,c \in \R,
\end{align*}
where the dependence on $(q,t)$ and $\theta$ will be clear from context.

\begin{proposition} \label{prop:mac_to_jacobi}
Let $\theta > 0$. Let $\e > 0$ be a small parameter with $t = q^\theta, q = e^{-\e}$, and $\lambda$ be a random element of $\Y_N$ with
\[ \prob(\lambda = \mu) = \frac{1}{\Pi(\rho_0^N, t^\alpha \rho_0^M;q,t)} P_\mu(\rho_0^N;q,t) Q_\mu(t^\alpha\rho_0^M;q,t). \]
If $\by \sim \P_\theta^{\alpha,M,N}$, then $q^\lambda \to \by$ in distribution as $\e \to 0$. In particular, the density of $\br = -\log \by$ is
\[ \frac{1}{H(\varrho_0^N,\varrho_0^M - \alpha\theta;\theta)}\cF_{\br}(\varrho_0^N;\theta) \wt{\cF}_{\br}(\varrho_0^M - \alpha \theta;\theta) \, d\br. \]
\end{proposition}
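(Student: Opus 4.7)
The plan is to read the Macdonald measure in the statement as the top-level marginal of an ascending Macdonald process (with specializations $\rho_0^N$ and $t^\alpha\rho_0^M$) and then take its Heckman--Opdam degeneration. This degeneration is exactly what is computed in \cite{BG1}*{Theorem 2.8} for the corresponding $\beta$-Jacobi \emph{corners} process; the top-level marginal of a $\beta$-Jacobi corners process is, by construction, the $\beta$-Jacobi ensemble $\P_\theta^{\alpha,M,N}$. Restricting that multilevel convergence to the top level therefore yields $q^\lambda \to \by$ in distribution, with $\by \sim \P_\theta^{\alpha,M,N}$.

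To extract the explicit Heckman--Opdam form of the density, I would work directly with the weights. Under $q = e^{-\e}$, $t = q^\theta$, and the matching $x_i = \exp(\e z_i)$, the principal specializations in the statement correspond to Heckman--Opdam variables via $\rho_0^N \leftrightarrow \varrho_0^N$ and $t^\alpha\rho_0^M \leftrightarrow \varrho_0^M - \alpha\theta$. Setting $\mu = \lfloor \e^{-1}\br\rfloor$ for $\br \in \cR^N$ and applying the defining limits of $\cF$ and $\wt{\cF}$ from \Cref{sec:background} gives the pointwise asymptotics
\[ P_\mu(\rho_0^N;q,t) \sim \e^{-\theta N(N-1)/2}\,\cF_{\br}(\varrho_0^N;\theta), \]
\[ Q_\mu(t^\alpha\rho_0^M;q,t) \sim \e^{-N(\theta-1)-\theta N(N-1)/2 - N(M-N)}\,\wt{\cF}_{\br}(\varrho_0^M - \alpha\theta;\theta). \]
Including the Jacobian $\e^N$ that converts the sum over $\mu$ into an integral over $\br$, the $\e$-powers recombine with the Heckman--Opdam scaling of $\Pi(\rho_0^N,t^\alpha\rho_0^M;q,t)$, which by the Cauchy identity \eqref{eq:cauchy2} limits to $H(\varrho_0^N,\varrho_0^M - \alpha\theta;\theta)$. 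The quotient produces the stated density for $\br = -\log\by$.

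The main obstacle is upgrading this pointwise convergence of weights to weak convergence of the rescaled random vector $q^\lambda$. Tightness is required, and I would obtain it from the explicit principal-specialization product formula \cite{Mac}*{Chapter VI, (6.11)} for $P_\mu(\rho_0^N;q,t)$ together with its dual counterpart for $Q_\mu(t^\alpha\rho_0^M;q,t)$; these give manifestly non-negative closed-form bounds that, after the Heckman--Opdam rescaling, admit an integrable majorant on $\cR^N$ uniform in $\e$. Alternatively, this tightness is already implicit in the proof of \cite{BG1}*{Theorem 2.8}, so no additional work is needed beyond invoking that result and projecting to the top level. Either route produces both the convergence $q^\lambda \to \by$ and the Heckman--Opdam density simultaneously.
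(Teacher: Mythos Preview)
Your proposal is correct and matches the paper's approach: the paper does not give a standalone proof but simply states that the proposition is a consequence of \cite{BG1}*{Theorem 2.8} for $\beta$-Jacobi corners processes, which is exactly what you invoke (projecting to the top level). Your additional sketch of the direct pointwise computation of the Heckman--Opdam density is a helpful elaboration, but not something the paper provides beyond the citation.
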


\begin{definition}
Let $\beta \in \{1,2,4\}$ and $\bx^{(1)},\bx^{(2)},\ldots$ be independent, random elements of $\cR^N$. Define the $\beta$-\emph{product process} on $(\bx^{(T)})_{T \in \Z_+}$ to be the random sequence $(\by^{(T)})_{T \in \Z_+}$ where $\by^{(1)} := \bx^{(1)}$ and
\[ \by^{(T+1)} := \by^{(T)} \boxtimes_\beta \bx^{(T+1)}, \quad \quad T \in \Z_+. \]
\end{definition}

By the independence of $\bx^{(1)},\bx^{(2)},\ldots$, the $\beta$-product process is a Markov process in discrete time $T$. We compute the Markov transition probabilities of this process for $\bx^{(T)}$ distributed as a $\beta$-Jacobi ensemble.

\begin{proposition} \label{prop:markov}
Let $\theta \in \{1/2,1,2\}$. If $\bx \sim \P_\theta^{\alpha,M,N}$, $\by \in \cU^N$ and $\bz = \by \boxtimes_{2\theta} \bx$, then
\begin{align} \label{eq:Jmom}
\E \wh{J}_\kappa(\bz;\theta) = \wh{J}_\kappa(\by;\theta) \frac{H(\varrho_\kappa^N,\varrho_0^M - \alpha\theta)}{H(\varrho_0^N,\varrho_0^M-\alpha\theta)}, \quad \kappa \in \Y_N.
\end{align}
The distribution of $\bz$ is determined by (\ref{eq:Jmom}).
\end{proposition}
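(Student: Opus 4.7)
The plan is to combine the Jack function identity (\ref{eq:Jack_relation}) with the Heckman--Opdam density of the $\beta$-Jacobi ensemble furnished by \Cref{prop:mac_to_jacobi}. Specializing (\ref{eq:Jack_relation}) at $\beta = 2\theta$ with the fixed matrix chosen to have squared singular values $\by \in \cU^N$ and the random truncated $\U_{2\theta}$ matrix having squared singular values $\bx \sim \P^{\alpha,M,N}_\theta$ (by \Cref{prop:U_beta}) immediately yields
\[ \E \wh{J}_\kappa(\bz;\theta) \,=\, \wh{J}_\kappa(\by;\theta)\,\E \wh{J}_\kappa(\bx;\theta), \]
using that $\wh{J}_\kappa(\cdot;\theta) = J_\kappa(\cdot;\theta)/J_\kappa(1^N;\theta)$ is exactly the ratio appearing in (\ref{eq:Jack_relation}). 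The entire problem therefore reduces to proving the single identity
\[ \E \wh{J}_\kappa(\bx;\theta) \,=\, \frac{H(\varrho_\kappa^N,\varrho_0^M - \alpha\theta;\theta)}{H(\varrho_0^N,\varrho_0^M - \alpha\theta;\theta)}. \]

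To prove this, I change variables $\br := -\log\bx$ and apply \Cref{prop:mac_to_jacobi}, which identifies the density of $\br$ as $\cF_{\br}(\varrho_0^N;\theta)\wt{\cF}_{\br}(\varrho_0^M - \alpha\theta;\theta)/H(\varrho_0^N,\varrho_0^M - \alpha\theta;\theta)$. The variable-index symmetry (\ref{eq:varind2}) then rewrites the integrand as
\[ \wh{J}_\kappa(e^{-\br};\theta) \,=\, \wh{\cF}_{\br}(\varrho_\kappa^N;\theta) \,=\, \frac{\cF_{\br}(\varrho_\kappa^N;\theta)}{\cF_{\br}(\varrho_0^N;\theta)}. \]
Upon substitution into the expectation, the $\cF_{\br}(\varrho_0^N;\theta)$ factors cancel exactly, and the remaining integral $\int \cF_{\br}(\varrho_\kappa^N;\theta)\,\wt{\cF}_{\br}(\varrho_0^M - \alpha\theta;\theta)\,d\br$ is identified with $H(\varrho_\kappa^N,\varrho_0^M - \alpha\theta;\theta)$ via the Cauchy-type identity (\ref{eq:cauchy2}). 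The hypothesis $\Re(a_i + b_j) < 0$ required by (\ref{eq:cauchy2}) is satisfied because $\alpha > 0$ together with $\kappa_i \ge 0$ forces every entrywise sum of $\varrho_\kappa^N$ and $\varrho_0^M - \alpha\theta$ to be at most $-\alpha\theta < 0$.

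The uniqueness of the distribution follows from compactness of the support combined with density of symmetric polynomials. Since the squared singular values of both $X$ and $Y$ lie in $[0,1]$ (equivalently $\|X\|_{\mathrm{op}},\|Y\|_{\mathrm{op}} \le 1$), the product $YX$ satisfies $\|YX\|_{\mathrm{op}} \le 1$, whence $\bz \in \cU^N$. The normalized Jack polynomials $\{\wh{J}_\kappa(\cdot;\theta)\}_{\kappa \in \Y_N}$ span the algebra of symmetric polynomials in $N$ variables, which separates points of the compact set $\cU^N$ via the elementary symmetric polynomials; by Stone--Weierstrass they are dense in $C(\cU^N)$, so the values $\E \wh{J}_\kappa(\bz;\theta)$ uniquely determine the law of $\bz$. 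The proof is essentially formal once the three ingredients---the Jack identity (\ref{eq:Jack_relation}), the Heckman--Opdam density, and the Cauchy-type identity (\ref{eq:cauchy2})---are lined up; the only mild care needed is tracking the normalization conventions between $J_\kappa$, $\wh{J}_\kappa$, $\cF_{\br}$, and $\wh{\cF}_{\br}$, and verifying the strict-negativity condition for (\ref{eq:cauchy2}).
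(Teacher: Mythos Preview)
Your proof is correct and follows essentially the same route as the paper's. The only cosmetic difference is in how the factorization $\E\,\wh{J}_\kappa(\bz;\theta)=\wh{J}_\kappa(\by;\theta)\,\E\,\wh{J}_\kappa(\bx;\theta)$ is obtained: you invoke the Jack zonal spherical function identity \eqref{eq:Jack_relation} directly, whereas the paper passes through \cite{GM}*{Proposition~2.2} to identify the law of $\bz$ with the Heckman--Opdam structure constants $c_{\bl,\br}^{\bs}(\wh{\cF};\theta)$ and then uses their defining multiplicativity together with \eqref{eq:varind2}. After that, both arguments are identical---the Heckman--Opdam density from \Cref{prop:mac_to_jacobi}, the cancellation via \eqref{eq:varind2}, the Cauchy identity \eqref{eq:cauchy2}, and the uniqueness from density of symmetric polynomials on the compact set $\cU^N$. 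One small point: when you cite \Cref{prop:U_beta} to realize $\bx$ as the squared singular values of a truncated $\U_{2\theta}$ matrix, that truncation is $N'\times N$ rather than $N\times N$, while \eqref{eq:Jack_relation} and the definition of $\boxtimes_{2\theta}$ are stated for square matrices; this is harmless (any right $\U_{2\theta}$-invariant $N\times N$ matrix with the prescribed singular values works), but it is worth a half-sentence acknowledging the reduction.
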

\begin{proof}
If $\bl,\br \in \cR^N$, then by \cite{GM}*{Proposition 2.2} the probability measure of the random vector $\bs \in \cR^N$ defined by
\[ \exp(-\bs) = \exp(-\bl) \boxtimes_{2\theta} \exp(-\br) \]
is given by $c_{\bl,\br}^{\bs}(\wh{\cF};\theta)$. If we let $\br \in \cR^N$ be random such that $\exp(-\br) \sim \P_\theta^{\alpha,M,N}$, then for any $\kappa \in \Y_N$,
\begin{align*}
\E \wh{J}_\kappa(\exp(-\bs);\theta) &= \int_{\br} \frac{1}{H(\varrho_0^N, \varrho_0^M - \alpha\theta)} \cF_{\br}(\varrho_0^N;\theta) \wt{\cF}_{\br}(\varrho_0^M - \alpha\theta;\theta) \int_{\bs}  c_{\bl,\br}^{\bs}(\wh{\cF};\theta) \wh{J}_\kappa(\exp(-\bs);\theta) \, d\br \\
&= \frac{1}{H(\varrho_0^N, \varrho_0^M - \alpha\theta)} \int_{\br} \cF_{\br}(\varrho_0^N;\theta) \wt{\cF}_{\br}(\varrho_0^M - \alpha\theta;\theta) \wh{\cF}_{\bl}(\varrho_\kappa^N;\theta) \wh{\cF}_{\br}(\varrho_\kappa^N;\theta) \, d\br \\
&= \frac{\wh{J}_\kappa(\exp(-\bl);\theta)}{H(\varrho_0^N, \varrho_0^M - \alpha\theta)} \int_{\br} \cF_{\br}(\varrho_\kappa^N;\theta) \wt{\cF}_{\br}(\varrho_0^M - \alpha\theta;\theta) \, d\br \\
&= \wh{J}_\kappa(\exp(-\bl);\theta) \frac{H(\varrho_\kappa^N, \varrho_0^M - \alpha\theta)}{H(\varrho_0^N, \varrho_0^M - \alpha\theta)}
\end{align*}
where the first equality uses \Cref{prop:mac_to_jacobi}, the second and third equalities use (\ref{eq:varind2}), and the fourth equality uses (\ref{eq:cauchy2}). Taking $\by = \exp(-\bl)$ and $\bz = \exp(-\bs)$ proves (\ref{eq:Jmom}). Since the Jack symmetric functions in $N$-variables form a basis for the space of symmetric polynomials in $N$-variables and since $\bz$ is supported in $\cU^N$, (\ref{eq:Jmom}) determines the distribution of $\bz$.
\end{proof}

The transition probabilities of Proposition \ref{prop:markov} can be extended to arbitrary $\beta > 0$. We detail this extrapolation below. The idea is seeing the Markov kernels of Proposition \ref{prop:markov} as limits of a family of kernels derived from Macdonald symmetric functions.

\begin{definition}
Let $0 < q,t < 1$. Define the kernel
\[ K_{q,t}^{\alpha,M,N}(\mu,\lambda) := \frac{1}{\Pi(\rho_0^N,t^\alpha \rho_0^M;q,t)} \frac{P_\lambda(\rho_0^N;q,t)}{P_\mu(\rho_0^N;q,t)} Q_{\lambda/\mu}(t^\alpha \rho_0^M;q,t), \quad \quad \lambda,\mu \in \Y_N \]
\end{definition}

\begin{lemma} \label{lem:Pmom}
For each $\kappa, \mu \in \Y_N$, we have
\begin{align}
\sum_{\lambda \in \Y} \wh{P}_\kappa(\rho_\lambda^N;q,t) K_{q,t}^{\alpha,M,N}(\mu,\lambda) &= \wh{P}_\kappa(\rho_\mu^N;q,t) \frac{\Pi(\rho_\kappa^N,t^\alpha \rho_0^M;q,t)}{\Pi(\rho_0^N, t^\alpha \rho_0^M;q,t)}.
\end{align}
In particular, $K_{q,t}^{\alpha,M,N}(\mu,\cdot)$ defines a probability distribution.
\end{lemma}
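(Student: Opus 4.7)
The plan is to reduce the identity to the skew Cauchy identity \eqref{eq:flipid} via the label-variable symmetry \eqref{eq:varind1}, which exchanges the roles of $\kappa$ and $\lambda$ in $\wh{P}_\kappa(\rho_\lambda^N;q,t)$.

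First I would unfold the normalization: writing $\wh{P}_\kappa(\rho_\lambda^N;q,t) = P_\kappa(\rho_\lambda^N;q,t)/P_\kappa(\rho_0^N;q,t)$ and then applying \eqref{eq:varind1} (with the identification $\rho_\mu^N = (q^{\mu_1} t^{N-1},\ldots,q^{\mu_N})$) to rewrite
\[
\wh{P}_\kappa(\rho_\lambda^N;q,t) = \wh{P}_\lambda(\rho_\kappa^N;q,t) = \frac{P_\lambda(\rho_\kappa^N;q,t)}{P_\lambda(\rho_0^N;q,t)}.
\]
Substituting into the left-hand side, the factors $P_\lambda(\rho_0^N;q,t)$ cancel against the denominators in $K_{q,t}^{\alpha,M,N}(\mu,\lambda)$, leaving
\[
\frac{1}{\Pi(\rho_0^N,t^\alpha\rho_0^M;q,t)\,P_\mu(\rho_0^N;q,t)} \sum_{\lambda \in \Y} P_\lambda(\rho_\kappa^N;q,t)\, Q_{\lambda/\mu}(t^\alpha\rho_0^M;q,t).
\]

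Second, I would apply the skew Cauchy identity \eqref{eq:flipid} with $X = \rho_\kappa^N$ and $Y = t^\alpha \rho_0^M$, which evaluates the sum to $\Pi(\rho_\kappa^N, t^\alpha\rho_0^M;q,t)\,P_\mu(\rho_\kappa^N;q,t)$. Then one more application of the label-variable symmetry \eqref{eq:varind1} gives $P_\mu(\rho_\kappa^N;q,t)/P_\mu(\rho_0^N;q,t) = \wh{P}_\mu(\rho_\kappa^N;q,t) = \wh{P}_\kappa(\rho_\mu^N;q,t)$, yielding the claimed right-hand side. A mild technical point is that \eqref{eq:flipid} is stated formally; here all variables $q^{\kappa_i} t^{N-i}$ and $t^{\alpha + M - j}$ lie in $(0,1)$, so the sum converges absolutely and the identity holds analytically (as discussed after \eqref{eq:cauchy1}).

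For the probability-distribution claim, I would specialize to $\kappa = \emptyset$: then $\rho_\emptyset^N = \rho_0^N$, $\wh{P}_\emptyset \equiv 1$, and the identity reduces to $\sum_\lambda K_{q,t}^{\alpha,M,N}(\mu,\lambda) = 1$, giving total mass one. Nonnegativity follows from the positivity of $P_\lambda(\rho_0^N;q,t)$ and of $Q_{\lambda/\mu}(a_1,\ldots,a_M;q,t)$ at nonnegative arguments (the analogue of the nonnegativity in \cite{Mac}*{Chapter VI, (7.9') \& (7.14')} cited in \Cref{sec:background}, together with $Q_{\lambda/\mu}$ being a positive scalar multiple of $P_{\lambda/\mu}$ up to the standard Macdonald normalization), and from positivity of $\Pi(\rho_0^N, t^\alpha\rho_0^M;q,t)$ for $0<q,t<1$.

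I do not anticipate a main obstacle: the computation is essentially a bookkeeping exercise combining two identities already recorded in the preliminaries, namely the label-variable symmetry \eqref{eq:varind1} (used twice, in opposite directions) and the skew Cauchy identity \eqref{eq:flipid}. The only thing to be careful about is ensuring that the specializations lie in the convergence region of \eqref{eq:cauchy1}, which is immediate here since $q,t \in (0,1)$ force $|q^{\kappa_i} t^{N-i} \cdot t^{\alpha+M-j}| < 1$ for all indices.
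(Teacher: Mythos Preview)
Your proposal is correct and follows essentially the same approach as the paper: apply the label-variable symmetry \eqref{eq:varind1} to swap $\kappa$ and $\lambda$, cancel the $P_\lambda(\rho_0^N)$ factors, invoke the skew Cauchy identity \eqref{eq:flipid}, and use \eqref{eq:varind1} once more to arrive at $\wh{P}_\kappa(\rho_\mu^N)$. Your additional remarks on convergence and nonnegativity are welcome elaborations that the paper leaves implicit.
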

\begin{proof}
We contract the notation by dropping the $(q,t)$ from $\Pi, P, \wh{P},Q$. We have
\begin{align*}
\sum_{\lambda \in \Y} \wh{P}_\kappa(\rho_\lambda^N) K_{q,t}^{\alpha,M,N}(\mu,\lambda) &= \frac{1}{P_{\mu}(\rho_0^N) \Pi(\rho_0^N, t^\alpha \rho_0^M)} \sum_{\lambda \in \Y} \wh{P}_\lambda(\rho_\kappa^N) P_\lambda(\rho_0^N) Q_{\lambda/\mu}(t^\alpha \rho_0^M) \\
&= \frac{1}{P_{\mu}(\rho_0^N) \Pi(\rho_0^N, t^\alpha \rho_0^M)} \sum_{\lambda \in \Y} P_\lambda(\rho_\kappa^N) Q_{\lambda/\mu}(t^\alpha \rho_0^M) \\
&= \wh{P}_\mu(\rho_\kappa^N) \frac{\Pi(\rho_\kappa^N,t^\alpha \rho_0^M)}{\Pi(\rho_0^N, t^\alpha \rho_0^M)}
\end{align*}
where we use \eqref{eq:varind1} in the first equality and \eqref{eq:flipid} in the final equality. By taking $\kappa = (0)$, we see that $K_{q,t}^{\alpha,M,N}(\mu,\cdot)$ defines a probability distribution. By \eqref{eq:varind1}, the lemma follows.
\end{proof}

\begin{proposition} \label{prop:bmarkov}
Let $\theta > 0$. Suppose $\bv \in \cU^N$, and $\e > 0$ is a small parameter with $q = e^{-\e}$, $t = q^\theta$, and $\mu = \mu(\e) \in \Y_N$ such that $q^\mu \to \bv$ as $\e \to 0$. If $\lambda \sim K_{q,t}^{\alpha,M,N}(\mu,\cdot)$, then as $\e \to 0$, $q^\lambda$ converges in distribution to a probability measure $\cK_\theta^{\alpha,M,N}(\bv,d\bu)$ on $\cU^N$ determined by
\begin{align} \label{eq:bJmom}
\int_{\cU^N} \wh{J}_\kappa(\bu;\theta) \cK_\theta^{\alpha,M,N}(\bv,d\bu) =  \wh{J}_\kappa(\bv;\theta) \frac{H(\varrho_\kappa^N,\varrho_0^M - \alpha\theta;\theta)}{H(\varrho_0^N,\varrho_0^M - \alpha\theta;\theta)}, \quad \kappa \in \Y_N.
\end{align}
\end{proposition}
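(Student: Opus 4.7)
My plan is to take the Heckman--Opdam limit $\e \to 0$ directly in the identity provided by Lemma \ref{lem:Pmom}. Since $\cU^N \subset [0,1]^N$ is compact, the pushforward $\nu_\e$ of $K_{q,t}^{\alpha,M,N}(\mu,\cdot)$ under $\lambda \mapsto q^\lambda$ is automatically tight, so along some subsequence $\e_k \to 0$ it converges weakly to a probability measure $\cK^{\ast}$ on $\cU^N$. It then suffices to show (a) that every such subsequential limit satisfies \eqref{eq:bJmom} and (b) that \eqref{eq:bJmom} pins down $\cK^{\ast}$ uniquely; convergence in distribution of $q^\lambda$ along the full family will then follow.

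For (a), I would rewrite Lemma \ref{lem:Pmom} as
\[
\int_{\cU^N} \Phi_\kappa^{(q,t)}(\bu)\, d\nu_\e(\bu) \;=\; \wh{P}_\kappa(\rho_\mu^N;q,t) \cdot \frac{\Pi(\rho_\kappa^N, t^\alpha \rho_0^M; q, t)}{\Pi(\rho_0^N, t^\alpha \rho_0^M; q, t)},
\]
where $\Phi_\kappa^{(q,t)}(u_1,\ldots,u_N) := \wh{P}_\kappa(u_1 t^{N-1}, u_2 t^{N-2}, \ldots, u_N; q, t)$, exploiting that $\wh{P}_\kappa(\rho_\lambda^N; q,t)$ depends on $\lambda$ only through $q^\lambda$. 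Three limits as $\e \to 0$ are needed: (i) $\Phi_\kappa^{(q,t)} \to \wh{J}_\kappa(\cdot;\theta)$ uniformly on $\cU^N$, from coefficient-wise convergence of the normalized Macdonald polynomial and $t^{N-i} \to 1$ on a compact cube; (ii) $\wh{P}_\kappa(\rho_\mu^N;q,t) \to \wh{J}_\kappa(\bv;\theta)$, by the same reasoning and the hypothesis $q^\mu \to \bv$; and (iii) the key Cauchy-kernel ratio
\[
\frac{\Pi(\rho_\kappa^N, t^\alpha \rho_0^M; q, t)}{\Pi(\rho_0^N, t^\alpha \rho_0^M; q, t)} \;\longrightarrow\; \frac{H(\varrho_\kappa^N, \varrho_0^M - \alpha\theta;\theta)}{H(\varrho_0^N, \varrho_0^M - \alpha\theta;\theta)}.
\]
Combining (i)--(iii) along the chosen subsequence then yields \eqref{eq:bJmom} for $\cK^{\ast}$.

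The main technical step is (iii). Each argument $\rho_{\kappa,i}^N \cdot t^\alpha \rho_{0,j}^M = q^{c_{ij}^\kappa}$ has strictly positive exponent $c_{ij}^\kappa = \kappa_i + \theta(N - i + \alpha + M - j)$, since $\alpha,\theta > 0$, so the classical $q$-Gamma asymptotic $(q^a;q)_\infty \sim (q;q)_\infty (1-q)^{1-a}/\Gamma(a)$ applies. Each Pochhammer ratio $(tq^{c_{ij}^\kappa};q)_\infty/(q^{c_{ij}^\kappa};q)_\infty$ then contributes $(1-q)^{-\theta}\,\Gamma(c_{ij}^\kappa)/\Gamma(c_{ij}^\kappa+\theta)$; the divergent $(1-q)^{-\theta}$ factors cancel pairwise between numerator and denominator of the $\Pi/\Pi$ ratio, and using $c_{ij}^\kappa = -\varrho_{\kappa,i}^N - (\varrho_{0,j}^M - \alpha\theta)$ the surviving product of Gamma quotients is exactly the stated ratio of $H$-functions. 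For the uniqueness claim (b), the family $\{\wh{J}_\kappa(\cdot;\theta)\}_{\kappa \in \Y_N}$ is a linear basis of symmetric polynomials in $N$ variables, so its linear span is uniformly dense in $C(\cU^N)$ by Stone--Weierstrass; thus \eqref{eq:bJmom} determines $\cK^{\ast}$ uniquely, every subsequential weak limit coincides with this measure $\cK_\theta^{\alpha,M,N}(\bv,\cdot)$, and weak convergence of $q^\lambda$ follows.
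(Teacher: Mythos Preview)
Your proposal is correct and follows essentially the same route as the paper: both arguments apply Lemma~\ref{lem:Pmom}, use the uniform convergence $\wh{P}_\kappa(u_1 t^{N-1},\ldots,u_N;q,t)\to\wh{J}_\kappa(\bu;\theta)$ on the compact set $\cU^N$, pass to the limit in the Cauchy-kernel ratio to obtain \eqref{eq:bJmom}, and then invoke density of the Jack polynomials in $C(\cU^N)$ to pin down the limiting measure. The only differences are cosmetic: you make the tightness/subsequential-limit structure explicit and you spell out the $q$-Gamma computation for step~(iii), whereas the paper simply asserts the limit of the $\Pi/\Pi$ ratio (which is implicit in the Heckman--Opdam degeneration recorded earlier in \S\ref{sec:background}).
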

\begin{proof}
By Lemma \ref{lem:Pmom}, for any $\kappa \in \Y_N$ we have the convergence
\[ \lim_{\e \to 0} \sum_{\upsilon \in \exp(-\e \Y_N)} \wh{P}_\kappa \left(\upsilon_1 t^{N-1},\upsilon_2 t^{N-2},\ldots,\upsilon_N;q,t \right) K_{q,t}^{\alpha,M,N}\left(\mu,-\e^{-1} \log \upsilon\right) = \wh{J}_\kappa(\bv;\theta) \frac{H(\varrho_\kappa^N,\varrho_0^M - \alpha\theta;\theta)}{H(\varrho_0^N,\varrho_0^M - \alpha\theta;\theta)}, \]
and the uniform convergence
\[ \lim_{\e \to 0} \wh{P}_\kappa\left(u_1 t^{N-1},u_2 t^{N-2},\ldots, u_N; q,t \right) = \wh{J}_\kappa(u_1,u_2,\ldots, u_N; \theta ) \]
over $(u_1,\ldots,u_N) \in \cU^N$. Therefore
\[ \lim_{\e \to 0} \sum_{\upsilon \in \exp(-\e \Y_N)} \wh{J}_\kappa (\upsilon_1,\ldots,\upsilon_N;\theta) K_{q,t}^{\alpha,M,N}\left(\mu,-\e^{-1} \log \upsilon\right) = \wh{J}_\kappa(\bv;\theta) \frac{H(\varrho_\kappa^N,\varrho_0^M - \alpha\theta;\theta)}{H(\varrho_0^N,\varrho_0^M - \alpha\theta;\theta)}. \]
Since $\cU^N$ is compact and the Jack symmetric functions are dense in $C(\cU^N,\R)$, the desired convergence and (\ref{eq:bJmom}) follow. The fact that $\cK_\theta^{\alpha,M,N}(\bv,d\bu)$ is a probability measure comes from taking $\kappa = (0)$ above.
\end{proof}

Using the notation $\bv \boxtimes_{2\theta} \mathbf{w}$ to denote the random vector distributed as $\cK_\theta^{\alpha,M,N}(\bv,d\bu)$ where $\mathbf{w} \sim \P_\theta^{\alpha,M,N}$, we can reexpress \eqref{eq:bJmom} as
\[ \E \wh{J}_\kappa(\bv \boxtimes_{2\theta} \mathbf{w};\theta) = \wh{J}_\kappa(\bv;\theta) \E \wh{J}_\kappa(\mathbf{w};\theta). \]
Indeed, we have
\begin{align*}
\E \wh{J}_\kappa(\mathbf{w};\theta) &= \frac{1}{H(\varrho_0^N,\varrho_0^M - \alpha\theta;\theta)} \int \wh{J}_\kappa(\exp(-\br);\theta) \cF_{\br}(\varrho_0^N;\theta) \wt{\cF}_{\br}(\varrho_0^M - \alpha \theta;\theta) \\
&= \frac{1}{H(\varrho_0^N,\varrho_0^M - \alpha\theta;\theta)} \int \cF_{\vec{r}}(\varrho_\kappa^N;\theta) \wt{\cF}_{\br}(\varrho_0^M - \alpha \theta;\theta) = \frac{H(\varrho_\kappa^N,\varrho_0^M - \alpha\theta;\theta)}{H(\varrho_0^N,\varrho_0^M - \alpha\theta;\theta)}
\end{align*}
where we use \Cref{prop:mac_to_jacobi} for the first equality, \eqref{eq:varind2} for the second, and \eqref{eq:cauchy2} for the third. Therefore, we see that \Cref{prop:bmarkov} implies \Cref{thm:existence}.

For the remainder of this section, fix $\bA := (\alpha_T,M_T)_{T \in \Z_+} \in (\R_+ \times \Z_+)^{\Z_+}$.

\begin{definition}
Let $\theta > 0$. The \emph{$\beta$-Jacobi product process with parameter $\bA$} is a Markov chain $(\by^{(T)})_{T \in \Z_+}$ with state space $\cU^N$ such that (i) $\by^{(1)} \sim \P_\theta^{\alpha_1,M_1,N}$ and (ii) $\cK_\theta^{\alpha_T,M_T,N}(\bv,d\bu)$ is the Markov kernel from $\by^{(T-1)}$ to $\by^{(T)}$. We denote the associated probability measure by $\P_\theta^{\bA,N}$.
\end{definition}

We refer to Markov chains of the form above as \emph{$\beta$-Jacobi product processes} where $\beta = 2\theta$. Informally, we may view $\by^{(T)}$ as satisfying
\[ \by^{(T+1)} = \by^{(T)} \boxtimes_\beta \bx^{(T)} \]
where $\bx^{(T)} \sim \P_\theta^{\alpha_T,M_T,N}$, $\by^{(1)} = \bx^{(1)}$ and $\boxtimes_\beta$ is an extension of the operation for $\beta = 1,2,4$.

\begin{definition} \label{def:iaMp}
Let $\P_{q,t}^{\bA,N}$ be the measure on sequences $(\lambda^T)_{T \in \Z_+} \in (\Y_N)^\infty$ where
\begin{align*}
\P_{q,t}^{\bA,N}(\lambda^1 = \mu) &= \frac{1}{\Pi(\rho_0^N,t^{\alpha_1} \rho_0^{M_1})} P_\mu(\rho_0^N;q,t) Q_\mu(t^{\alpha_1} \rho_0^{M_1};q,t), \\
\P_{q,t}^{\bA,N}(\lambda^T = \lambda | \lambda^{T-1} = \mu) &= K_{q,t}^{\alpha_T,M_T,N}(\mu,\lambda), \quad T > 1.
\end{align*}
\end{definition}

The measure $\P_{q,t}^{\bA,N}$ is an example of a so-called \emph{(infinite) ascending Macdonald process} \cite{BG1}*{Section 2.1}.

\begin{theorem} \label{thm:mactoprod}
Let $\theta > 0$. Let $\e > 0$ be a small parameter with $q = e^{-\e}$, $t = q^\theta$, $(\lambda^T := \lambda^T(\e))_{T \in \Z_+} \sim \P_{q,t}^{\bA,N}$. Then the finite dimensional distributions of $(q^{\lambda^T} )_{T \in \Z_+}$ converge weakly to those of $\P_\theta^{\bA,N}$ as $\e \to 0$.
\end{theorem}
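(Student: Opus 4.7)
The approach is induction on the number of time slices $T$. The base case $T=1$ is \Cref{prop:mac_to_jacobi}. For the inductive step, I condition on $\lambda^{T-1}$ and reduce to the convergence of the one-step kernel $K_{q,t}^{\alpha_T, M_T, N}$ to $\cK_\theta^{\alpha_T, M_T, N}$. The critical technical step is to promote the pointwise convergence of \Cref{prop:bmarkov} (valid along deterministic sequences $\mu(\e)$ with $q^{\mu(\e)} \to \bv$) to the uniform-in-$\mu$ convergence
\begin{align*}
\lim_{\e \to 0} \sup_{\mu \in \Y_N} \left| \sum_{\lambda \in \Y_N} g(q^\lambda) K_{q,t}^{\alpha, M, N}(\mu, \lambda) - \int_{\cU^N} g(\bu) \, \cK_\theta^{\alpha, M, N}(q^\mu, d\bu) \right| = 0, \quad g \in C(\cU^N, \R),
\end{align*}
with the limit integral continuous in $q^\mu \in \cU^N$. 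For $g = \wh{J}_\kappa(\cdot;\theta)$, uniformity follows by comparing the two sides using the exact identity of \Cref{lem:Pmom} on the prelimit side and \eqref{eq:bJmom} on the limit side; both simplify to a Jack polynomial in $q^\mu$ times a normalization, so uniformity reduces to the uniform convergence $\wh{P}_\kappa(\cdot;q,t) \to \wh{J}_\kappa(\cdot;\theta)$ on the compact set $\cU^N$ as $t\to 1$. The extension to general $g$ uses density of Jack polynomials in $C(\cU^N, \R)$, which holds because $\cU^N \cong [0,1]^N/S_N$ and symmetric polynomials separate the points of this quotient, so Stone--Weierstrass applies.

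Given this uniform convergence, the inductive step runs smoothly. For $f \in C((\cU^N)^T, \R)$, I write $\E f(q^{\lambda^1}, \ldots, q^{\lambda^T}) = \E h_\e(q^{\lambda^1}, \ldots, q^{\lambda^{T-1}})$, where $h_\e$ is obtained by averaging $f$ over $q^{\lambda^T}$ under the prelimit kernel conditioned on $\lambda^{T-1}$. Approximating $f$ uniformly by finite sums of tensor products $\prod_{i=1}^T g_i(\bv_i)$ via Stone--Weierstrass on $(\cU^N)^T$, the uniform kernel convergence gives $h_\e \to h$ uniformly, where $h(\bv_1, \ldots, \bv_{T-1}) := \int f(\bv_1, \ldots, \bv_{T-1}, \bu)\, \cK_\theta^{\alpha_T, M_T, N}(\bv_{T-1}, d\bu)$. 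The Feller property of $\cK_\theta^{\alpha_T, M_T, N}$ (from \eqref{eq:bJmom} together with density of Jack polynomials) ensures $h$ is continuous, so the inductive hypothesis applied to $h$ yields $\E h_\e \to \E h(\by^{(1)}, \ldots, \by^{(T-1)}) = \E f(\by^{(1)}, \ldots, \by^{(T)})$, completing the induction.

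The main obstacle is the upgrade from pointwise to uniform convergence of the kernels. This is precisely where the algebraic factorization in \Cref{lem:Pmom} is essential: it allows a comparison between the prelimit and limit kernels' actions on Jack test functions that is controlled uniformly over all $\mu \in \Y_N$, not just along a fixed convergent sequence, after which density closes the argument.
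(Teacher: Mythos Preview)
Your proof is correct, but it takes a more analytic route than the paper. Both arguments induct on $T$ with base case \Cref{prop:mac_to_jacobi}, and both use \Cref{lem:Pmom} to handle the conditional expectation at the last time. The difference is in how the inductive step is closed.

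The paper avoids your uniform-kernel-convergence step entirely by working only with test functions of the special form $\prod_{i=1}^T \wh{P}_{\kappa^i}(\rho_{\lambda^i}^N;q,t)$. After applying \Cref{lem:Pmom} to integrate out $\lambda^T$, the resulting product $\wh{P}_{\kappa^{T-1}}(\rho_{\lambda^{T-1}}^N)\,\wh{P}_{\kappa^T}(\rho_{\lambda^{T-1}}^N)$ is expanded via the structure coefficients $c^\mu_{\kappa^{T-1}\kappa^T}(\wh{P};q,t)$ into a \emph{finite} linear combination of single $\wh{P}_\mu(\rho_{\lambda^{T-1}}^N)$, and the induction hypothesis applies term by term; the known convergence $c^\mu_{\kappa^{T-1}\kappa^T}(\wh{P};q,t)\to c^\mu_{\kappa^{T-1}\kappa^T}(\wh{J};\theta)$ then yields the limit. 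This is a purely algebraic reduction: no approximation by tensor products, no Feller property, and no need to upgrade \Cref{prop:bmarkov} from pointwise to uniform.

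Your approach instead proves the uniform statement $\sup_{\mu}|K_{q,t}g(q^\mu)-\cK_\theta g(q^\mu)|\to 0$ and then runs a standard Markov-chain convergence argument via Stone--Weierstrass and the Feller property. This is a more general-purpose argument (it would apply to kernels without a nice multiplicative structure on eigenfunctions), at the cost of an extra step that the paper's algebraic trick with structure coefficients sidesteps. Your uniformity argument is sound: the key observation that $|\wh{P}_\kappa(\rho_\lambda^N;q,t)-\wh{J}_\kappa(q^\lambda;\theta)|\to 0$ uniformly over $\lambda\in\Y_N$ (polynomials with converging coefficients on a compact set, plus $t^{N-j}\to 1$) lets you pass from \Cref{lem:Pmom} to \eqref{eq:bJmom} uniformly in $\mu$.
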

\begin{proof}
We drop the $(q,t)$ and $\theta$ in our notation, though all limits are as $\e \to 0$ with $q = e^{-\e}$, $t = q^\theta$. Let $(\by^{(T)})_{T\in\Z_+} \sim \P_\theta^{\bA,N}$. It suffices to show that
\begin{align*}
\E \wh{P}_{\kappa^1}(\rho_{\lambda^1}^N) \cdots \wh{P}_{\kappa^T}(\rho_{\lambda^T}^N) \to \E \wh{J}_{\kappa^1}(\by^{(1)}) \cdots \wh{J}_{\kappa^T}(\by^{(T)})
\end{align*}
for any $T \ge 1$. We induct on $T$, observing that the $T = 1$ case follows from \Cref{prop:mac_to_jacobi}. If we assume the $T-1$ case, then
\begin{align*}
\E \wh{P}_{\kappa^1}(\rho_{\lambda^1}^N) \cdots \wh{P}_{\kappa^T}(\rho_{\lambda^T}^T) &= \frac{\Pi(\rho_{\kappa^T}^N, t^{\alpha_T} \rho_0^{M_T})}{\Pi(\rho_0^N, t^{\alpha_T} \rho_0^{M_T})} \E \wh{P}_{\kappa^1}(\rho_{\lambda^1}^N) \cdots \wh{P}_{\kappa^{T-1}}(\rho_{\lambda^{T-1}}) \wh{P}_{\kappa^T}(\rho_{\lambda^{T-1}}) \\
&= \frac{\Pi(\rho_{\kappa^T}^N, t^{\alpha_T} \rho_0^{M_T})}{\Pi(\rho_0^N, t^{\alpha_T} \rho_0^{M_T})} \sum_\mu c^\mu_{\kappa^{T-1} \kappa^T}(\wh{P}) \E \wh{P}_{\kappa^1}(\rho_{\lambda^1}^N) \cdots \wh{P}_{\kappa^{T-2}} (\rho_{\lambda^{T-2}}) \wh{P}_\mu(\rho_{\lambda^{T-1}})
\end{align*}
where the first equality follows from Lemma \ref{lem:Pmom}. The latter converges, by our induction hypothesis and (\ref{eq:LRconv}), to
\begin{align*}
\frac{H(\varrho_{\kappa^T}^N, \varrho_0^{M_T} - \alpha_T \theta)}{H(\varrho_0^N, \varrho_0^{M_T} - \alpha_T \theta)} \sum_\mu c^\mu_{\kappa^{T-1}\kappa^T}(\wh{J}) \E \wh{J}_{\kappa^1}(\by^{(1)}) \cdots \wh{J}_{\kappa^{T-2}} (\by^{(T-2)}) \wh{J}_\mu(\by^{(T-1)})
\end{align*}
which can be written as
\begin{align*}
\frac{H(\varrho_{\kappa^T}^N, \varrho_0^{M_T} - \alpha_T \theta)}{H(\varrho_0^N, \varrho_0^{M_T} - \alpha_T \theta)} \E \wh{J}_{\kappa^1}(\by^{(1)}) \cdots \wh{J}_{\kappa^{T-2}} (\by^{(T-2)}) \wh{J}_{\kappa^{T-1}}(\by^{(T-1)}) \wh{J}_{\kappa^T}(\by^{(T-1)}) = \E \wh{J}_{\kappa^1}(\by^{(1)}) \cdots \wh{J}_{\kappa^T}(\by^{(T)})
\end{align*}
by Proposition \ref{prop:bmarkov}.
\end{proof}

\section{Observables} \label{sec:moments}
The main results of this section are formulas for the joint moments of $\beta$-Jacobi product processes. We provide formulas for general $\beta > 0$ and more convenient formulas for $\beta = 2$.

Fix $N \in \Z_+$ and $\bA := (\alpha_T,M_T)_{T \in \Z_+} \in (\R_+ \times \Z_+)^{\Z_+}$ throughout this section. Given $U = (u_1,\ldots,u_k)$, $V = (v_1,\ldots,v_\ell)$, define
\begin{align*}
\cA_T(U) := \cA_T(U;\theta) &:= \prod_{i=1}^k \frac{u_i}{u_i + \theta N} \prod_{\tau = 1}^T \frac{u_i - \theta(\alpha_\tau - 1)}{u_i - \theta(\alpha_\tau + M_\tau - 1)}, \\
\cB(U) := \cB(U;\theta) &:= \frac{1}{(u_2 - u_1 + 1 - \theta) \cdots (u_k - u_{k-1} + 1 - \theta)} \prod_{1 \le i < j \le k} \frac{(u_j - u_i)(u_j - u_i + 1 - \theta)}{(u_j - u_i + 1)(u_j - u_i - \theta)}, \\
\cC(U,V) := \cC(U,V;\theta) &:= \prod_{i=1}^k \prod_{j=1}^\ell \frac{(v_j - u_i)(v_j - u_i + 1 - \theta)}{(v_j - u_i + 1)(v_j - u_i - \theta)}.
\end{align*}

\begin{definition}
Given $(x_1,\ldots,x_N) \in \R^N$, define for any $k > 0$
\[ \fP_k(x_1,\ldots,x_N) = \sum_{i=1}^N x_i^k. \]
\end{definition}

\begin{theorem} \label{thm:obs}
Let $\theta > 0$ and $(\by^{(T)})_{T \in \Z_+} \sim \P_\theta^{\bA,N}$. If $T_1 \ge \cdots \ge T_m > 0$ and $k_1,\ldots,k_m > 0$ are integers, then
\[ \E\left[ \prod_{i=1}^m \fP_{k_i}(\by^{(T_i)}) \right] = \frac{(-\theta)^{-m}}{(2\pi \bi)^{k_1 + \cdots + k_m}} \oint \cdots \oint \prod_{1 \le i < i' \le m} \cC(U_i,U_{i'}) \prod_{i=1}^m \cB(U_i) \cA_{T_i}(U_i) dU_i \]
where $U_i = (u_{i,1},\ldots,u_{i,k_i})$,
\begin{enumerate}
    \item the $u_{i,j}$-contour $\fU_{i,j}$ is positively oriented around the pole at $-\theta N$ and does not enclose $\theta(\alpha_\tau + M_\tau - 1)$ for $1 \le \tau \le T_i$;
    \item whenever $(i,j) < (i',j')$ in lexicographical order, $\fU_{i,j}$ is enclosed by $\fU_{i',j'} - \theta, \fU_{i',j'} + 1$;
\end{enumerate}
given that such contours exist.
\end{theorem}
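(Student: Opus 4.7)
The plan is to derive the moment formula by realizing $\E \prod_i \fP_{k_i}(\by^{(T_i)})$ as the $\e \to 0$ limit of analogous moments on the Macdonald side under the coupling from \Cref{thm:mactoprod}, where such moments are accessible via the Negut-type difference operators developed in \cite{N}, \cite{GZ}, and \cite{Ahn1}.

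First I would work on the Macdonald side with the prelimit observable $\fP_k^{q,t}(\lambda) := \sum_{i=1}^N q^{k \lambda_i} t^{k(N-i)}$, which converges to $\fP_k(\by^{(T)})$ under $q^{\lambda^T} \to \by^{(T)}$ with $q = e^{-\e}$, $t = q^\theta$, $\e \to 0$. The essential algebraic input is the existence of difference operators $\fO_{k,N}^{q,t}$ diagonalizing Macdonald polynomials,
\[ \fO_{k,N}^{q,t} P_\lambda(\rho_0^N; q, t) = E_k^{q,t}(\lambda) P_\lambda(\rho_0^N; q, t), \]
whose eigenvalues $E_k^{q,t}(\lambda)$ admit $k$-fold contour integral representations with integrands of the form ``$(q,t)$-analog of $\cB(U)$ times single-variable potentials encoding $\lambda$''. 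Inserting these operators into the partition function of $\P_{q,t}^{\bA,N}$ and iterating through the Cauchy structure of \Cref{def:iaMp} together with the diagonal action of \Cref{lem:Pmom}, one computes $\E \prod_i \fP_{k_i}^{q,t}(\lambda^{T_i})$ as a single iterated contour integral. In this bookkeeping, each time step $1 \le \tau \le T_i$ contributes a single-variable potential factor proportional to $\Pi(\rho_\kappa^N, t^{\alpha_\tau} \rho_0^{M_\tau})/\Pi(\rho_0^N, t^{\alpha_\tau} \rho_0^{M_\tau})$ evaluated at the insertion point, each operator contributes an internal $\cB^{q,t}$-kernel, and commuting later-time operators past earlier ones produces the cross-kernels $\cC^{q,t}(U_i, U_{i'})$. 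This yields an exact $(q,t)$-version of the claimed formula with contours arranged in the standard Negut nesting $\fU^{q,t}_{i,j} \subset t \cdot \fU^{q,t}_{i',j'} \cap q^{-1} \cdot \fU^{q,t}_{i',j'}$ for $(i,j) < (i',j')$.

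Finally I would take $\e \to 0$ and rescale $u_{i,j} \mapsto q^{u_{i,j}/\theta}$ (or a similar logarithmic substitution) so that the clusters of $q$- and $t$-poles coalesce to rational poles in the new variables. On the left, convergence to $\E \prod_i \fP_{k_i}(\by^{(T_i)})$ follows from \Cref{thm:mactoprod} and uniform boundedness of observables on $\cU^N$. On the right, the integrand converges pointwise to $\prod_i \cA_{T_i}(U_i) \cB(U_i) \prod_{i < i'} \cC(U_i, U_{i'})$: the pole cluster at $q^N$ becomes the pole at $u = -\theta N$ in $\cA_{T_i}$; the pole clusters at $t^{\alpha_\tau + M_\tau - 1}$ excluded on the Macdonald side degenerate to the excluded poles at $\theta(\alpha_\tau + M_\tau - 1)$; and the multiplicative Negut nesting becomes the additive nesting in condition~(2). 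The main obstacle will be justifying the exchange of the $\e \to 0$ limit and the contour integrals: one must deform the contours off the coalescing pole clusters before passing to the limit and then establish uniform integrability on the deformed contours, following an argument closely parallel to those in \cite{Ahn1} for general Macdonald processes.
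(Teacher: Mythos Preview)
Your overall strategy matches the paper's: realize the Jacobi moments as $\e\to 0$ limits of Macdonald-process moments via \Cref{thm:mactoprod}, use the Negut-type operators from \cite{GZ,Ahn1} to obtain a $(q,t)$-contour formula, then change variables $z_{i,j}=e^{\e u_{i,j}}$ and pass to the limit. However, there is a genuine gap in your description.

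The Negut operator formula for $\E\prod_i \fp_{k_i}(\lambda^{T_i})$ (this is \Cref{prop:macobs} in the paper, coming from \cite{Ahn1}*{Theorem 3.8}) has contours positively oriented around \emph{both} $0$ and $t^N$, not just around $t^N$. Under the logarithmic substitution $z=e^{\e u}$, the pole at $t^N$ becomes the desired pole at $u=-\theta N$, but the pole at $z=0$ is sent to $u=-\infty$, and the integrand does not decay there; so one cannot simply deform the contours and pass to the limit. Equivalently, your observable $\fP_k^{q,t}(\lambda)=\sum_i q^{k\lambda_i}t^{k(N-i)}$ is, up to a harmless scalar, $\fp_k(\lambda)-t^{-kN}$, and you are implicitly assuming a contour formula for $\E\prod_i(\fp_{k_i}(\lambda^{T_i})-t^{-k_iN})$ with contours around $t^N$ only. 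That identity is true, but it is not a routine deformation: it requires a combinatorial residue-cancellation argument showing that the sum over all ways of placing some $z_{i,j}$-contours around $0$ and the rest around $t^N$ collapses exactly to the single term with all contours around $t^N$ plus the constant $\prod_i t^{-k_iN}$. This is the content of Section~4.2 of the paper (\Cref{prop:macobs-}), proved first in the one-block case (Example~4.1) via a telescoping identity for the residues at $0$, and then in general by factorizing over blocks and applying inclusion-exclusion. Once \Cref{prop:macobs-} is in hand, the remaining limit transition is exactly as you describe: the contours can be fixed independently of $\e$, and the convergences $\fA_T(e^{\e U})\to\cA_T(U)$, $\e^{k-1}\fB(e^{\e U})\to k\,\cB(U)$, $\fC(e^{\e U},e^{\e U'})\to\cC(U,U')$ are uniform on those contours.
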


\begin{remark}
The existence of the contours is guaranteed for $N$ large. Since our applications are for $N$ large, the question of existence is not a hindrance.
\end{remark}

\begin{theorem} \label{thm:obs_cpx}
Let $(\by^{(T)})_{T \in \Z_+} \sim \P_1^{\bA,N}$. If $T_1 \ge \cdots \ge T_m > 0$ are integers and $c_1,\ldots,c_m > 0$ are real, then
\begin{align*}
\begin{multlined}
\E\left[ \prod_{i=1}^m \fP_{c_i}(\by^{(T_i)}) \right] = \frac{\prod_{i=1}^m (-c_i)^{-1}}{(2\pi\bi)^m} \oint \cdots \oint \prod_{1 \le i < j \le m} \frac{(u_j - u_i)(u_j + c_j - u_i - c_i)}{(u_j - u_i - c_i)(u_j + c_j - u_i)} \\
\times \prod_{i=1}^m \left( \prod_{\ell=1}^N \frac{u_i + \ell - 1}{u_i + c_i + \ell - 1} \cdot \prod_{\tau = 1}^{T_i} \prod_{\ell=1}^{M_\tau} \frac{u_i + c_i - \alpha_\tau - \ell + 1}{u_i - \alpha_\tau - \ell + 1} \right) du_i
\end{multlined}
\end{align*}
where the $u_i$-contour $\fU_i$ is positively oriented around $\{-c_i- \ell+1\}_{\ell=1}^N$ but does not enclose $\alpha_\tau+\ell-1$ for $1 \le \ell \le M_\tau$, $1 \le \tau \le T_i$, and is enclosed by $\fU_j - c_i, \fU_j + c_j$ for $j > i$; given that such contours exist.
\end{theorem}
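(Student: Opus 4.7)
The plan is to derive the formula at $\theta = 1$ (equivalently $t = q$) by working directly at the level of the Schur process $\P_{q,q}^{\bA,N}$ and passing to the Heckman-Opdam limit. Since Macdonald polynomials degenerate to Schur polynomials at $t = q$, the process $\P_{q,q}^{\bA,N}$ of \Cref{def:iaMp} is an ascending Schur process with specializations $\rho_0^N$ and $q^{\alpha_\tau}\rho_0^{M_\tau}$. By \Cref{thm:mactoprod}, $q^{\lambda^T} \to \by^{(T)}$ in distribution as $q = e^{-\epsilon} \to 1$, hence $\sum_{j=1}^N q^{c\lambda_j^T} \to \fP_c(\by^{(T)})$ for each real $c > 0$. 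It thus suffices to establish a prelimit contour integral formula for the joint multiplicative moments of the Schur process and then pass to the $q \to 1$ limit.

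The first step is to derive the prelimit identity, whose natural form uses one contour variable per multiplicative observable. For each $i$, the power sum $\sum_j q^{c_i \lambda_j^{T_i}}$ arises from a single-contour operator whose integrand is built from the Cauchy generating functions of the specializations $\rho_0^N$ and $q^{\alpha_\tau}\rho_0^{M_\tau}$, shifted by $q^{c_i}$ in the variable dual to $\rho_0^N$. When $m$ such operators at different time slices are composed, their ordering produces the interaction factor $\prod_{i<j} \frac{(q^{u_j} - q^{u_i})(q^{u_j + c_j} - q^{u_i + c_i})}{(q^{u_j} - q^{u_i + c_i})(q^{u_j + c_j} - q^{u_i})}$. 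This can be established via the Okounkov-Reshetikhin free-fermion vertex operator formalism, where the multiplicative observables are diagonal modes whose braiding yields exactly these factors, or directly from the double-contour representation of the Schur process correlation kernel by expanding joint moments as sums over pairings.

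The second step is the $q = e^{-\epsilon} \to 1$ limit. Using $q^x = 1 - \epsilon x + O(\epsilon^2)$, each difference $q^a - q^b$ contributes $\epsilon(b - a)(1 + O(\epsilon))$. The interaction factor carries equal counts of such differences in numerator and denominator, so the $\epsilon$'s cancel and the limit is $\prod_{i<j} \frac{(u_j - u_i)(u_j + c_j - u_i - c_i)}{(u_j - u_i - c_i)(u_j + c_j - u_i)}$. The single-variable Cauchy ratios $\frac{1 - q^{u+\ell}}{1 - q^{u+c+\ell}}$ degenerate to $\frac{u+\ell}{u+c+\ell}$; after a shift in the summation index these match $\prod_{\ell=1}^N \frac{u+\ell-1}{u+c+\ell-1}$ and $\prod_{\ell=1}^{M_\tau} \frac{u+c-\alpha_\tau-\ell+1}{u-\alpha_\tau-\ell+1}$. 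The remaining powers of $\epsilon$ combine with the prelimit normalization to produce the prefactor $\prod_i (-c_i)^{-1}$; contour positions $q^{u+c+\ell-1} = 1$ pass to $u = -c-\ell+1$, and nesting conditions are preserved.

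The main obstacle will be establishing the prelimit identity with the correct interaction structure and prefactor normalization. The multiplicative observable with real exponent $c$ lies outside the Negut-operator framework used for \Cref{thm:obs}, so a separate derivation via free-fermion vertex operators or the determinantal kernel is required; a particularly delicate aspect is tracking the powers of $(1 - q^{c_i})$ so that the prefactor limit is finite and equal to $\prod_i(-c_i)^{-1}$. As an independent consistency check, one may restrict to integer $c_i = k_i$, specialize \Cref{thm:obs} to $\theta = 1$, and collapse its $\sum k_i$-dimensional contour integrals into $m$-dimensional ones through iterated residues exploiting the Cauchy-like structure $\prod_{a<b} \frac{(u_b-u_a)^2}{(u_b-u_a+1)(u_b-u_a-1)}$ that emerges at $\theta = 1$; analytic continuation in $c_i > 0$ would then extend to the general real case.
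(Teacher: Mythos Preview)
Your overall architecture matches the paper: work with the Schur process $\P_{q,q}^{\bA,N}$, establish an $m$-dimensional contour formula for $\E\prod_i \boldsymbol{p}_{t_i}(\lambda^{T_i})$ with $t_i=q^{c_i}$, and pass to the Jacobi limit via \Cref{thm:mactoprod} and the substitution $z_i=e^{\e u_i}$. Your description of how the cross terms, the single-variable factors, and the contours survive the $\e\to 0$ limit is correct.

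Where you diverge from the paper is in the prelimit step, and here your proposal is incomplete. You propose to obtain the Schur-level identity via free-fermion vertex operators or by expanding joint moments through the determinantal kernel, but you do not carry either out; you also do not explain why the resulting contours omit $0$, which is essential for the limit to produce $\fP_{c_i}$ rather than a divergent quantity. The paper instead derives the prelimit formula directly from the first Macdonald difference operator at $q=t$, namely
\[
D_t^{b_1,\dots,b_n}=(1-t^{-1})\sum_{i=1}^n\prod_{j\neq i}\frac{b_i-t^{-1}b_j}{b_i-b_j}\,T_{t,b_i}+t^{-n},
\]
which is an eigenoperator for Schur polynomials with eigenvalue $\boldsymbol{p}_t(\lambda)$; iterated application to the Cauchy product gives the formula of \Cref{prop:schurobs} by a single residue expansion (this is \Cref{thm:schur_obs}). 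The paper then performs a separate ``excision of the pole at $0$'' argument (\Cref{prop:schurobs-}) to replace $\boldsymbol{p}_{t_i}(\lambda^{T_i})$ by $\boldsymbol{p}_{t_i}(\lambda^{T_i})-t_i^{-N}$, which is precisely what is needed so that $(t_i^{-N}-\boldsymbol{p}_{t_i}(\lambda^{T_i}))/(c_i\e)\to\fP_{c_i}(\by^{(T_i)})$ and so that the $u_i$-contours encircle only $\{-c_i-\ell+1\}_{\ell=1}^N$. Your sketch skips this step entirely.

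Your closing ``consistency check''---collapsing the $\sum k_i$-dimensional integral of \Cref{thm:obs} at $\theta=1$ to $m$ contours via residues and analytically continuing in $c_i$---is in fact a complete alternative proof, and the paper notes exactly this in \Cref{rem:extension}. If you had presented that as the main argument rather than an afterthought, it would stand on its own.
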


\begin{remark} \label{rem:extension}
We note that \Cref{thm:obs} holds for arbitrary $\theta > 0$, but is restricted to taking $\fP_{k_i}$ where $k_i$ are positive \emph{integers}. Moreover, the contours have dimension $k_1 + \cdots + k_m$. In contrast, the contours in \Cref{thm:obs_cpx} for $\theta = 1$ have dimension $m$, and $k_i = c_i$ can be an arbitrary positive real. In this article, we come to these two formulas from seemingly different approaches. However, it was pointed out by E. Dimitrov that the $\theta > 0$ implies the $\theta = 1$ formulas. By residue expansion and combinatorics, the higher dimensional contour integral formulas reduce to $m$-dimensional contour integral formulas when $\theta = 1$. The reason we can take $c_1,\ldots,c_m > 0$ arbitrary follows from analytic continuation. 
\end{remark}

The remainder of this section is devoted to the proofs of \Cref{thm:obs,thm:obs_cpx}. The starting point is a set of contour integral formulas for exponential moments of Macdonald processes, obtained in \cite{GZ} and further generalized in \cite{Ahn1}. For the $\theta = 1$ case, we use specialized formulas which are derived in \Cref{sec:obs}. \Cref{thm:obs,thm:obs_cpx} follow by applying \Cref{thm:mactoprod} and taking suitable limit transitions of these formulas.

\subsection{Formulas for Macdonald Processes}
We state and prove discrete analogues of \Cref{thm:obs,thm:obs_cpx} in the setting of Macdonald processes (recall \Cref{def:iaMp}).

\begin{definition} \label{def:p}
For $\lambda \in \Y_n$ define
\[ \fp_k(\lambda;q,t) := (1 - t^{-k}) \sum_{i=1}^n q^{k\lambda_i} t^{k(-i+1)} + t^{-kn} \]
for integers $k > 0$. We write $\fp_k(\lambda)$ if $q,t$ is clear from context. For $t > 0$, define
\[ \boldsymbol{p}_t(\lambda) := (1 - t^{-1})\sum_{i=1}^n t^{\lambda_i - i + 1} + t^{-n}. \]
Observe that these definitions are independent of $n$ as long as $\ell(\lambda) \le n$. 
\end{definition}

If we represent an ordered $k$-tuple of variables $(z_1,\ldots,z_k)$ by $Z$, then $Z^{-1} := (z_1^{-1},\ldots,z_k^{-1})$, $qZ := (qz_1,\ldots,qz_k)$, $dZ := dz_1 \cdots dz_k$. Given $Z = (z_1,\ldots,z_k)$ and $W = (w_1,\ldots,w_\ell)$, let 
\begin{align*}
\fA_T(Z) := \fA_T(Z;q,t) &:= \prod_{i=1}^k \frac{z_i - 1}{z_i - t^N} \prod_{\tau = 1}^T \frac{1 - t^{\alpha_\tau - 1}z_i}{1 - t^{\alpha_\tau + M_\tau - 1}z_i}, \\
\fB(Z) := \fB(Z;q,t) &:= \frac{\sum_{i=1}^k \frac{1}{z_i} \frac{q^{i-1}}{t^{i-1}}}{(z_2 - \frac{q}{t} z_1) \cdots (z_k - \frac{q}{t}z_{k-1})} \prod_{1 \le i < j \le k} \frac{(z_j - z_i)(z_j - \frac{q}{t} z_i)}{(z_j - q z_i)(z_j - \frac{1}{t} z_i)}, \\
\fC(Z,W) := \fC(Z,W;q,t) &:=  \prod_{i=1}^k \prod_{j=1}^\ell \frac{(w_j - z_i)(w_j - \frac{q}{t}z_i)}{(w_j - qz_i)(w_j - \frac{1}{t}z_i)}.
\end{align*}

\begin{proposition} \label{prop:macobs}
Let $(\lambda^T)_{T>0} \sim \P_{q,t}^{\bA,N}$. If $T_1 \ge \cdots \ge T_m > 0$ and $k_1,\ldots,k_m > 0$ are integers, then 
\[ \E \left[ \fp_{k_1}(\lambda^{T_1}) \cdots \fp_{k_m}(\lambda^{T_m}) \right] = \frac{1}{(2\pi \bi)^{k_1 + \cdots + k_m}} \oint \cdots \oint \prod_{1 \le i < i' \le m} \fC(Z_i, Z_{i'}) \prod_{i = 1}^m \fB(Z_i) \fA_{T_i}(Z_i) \, dZ_i \]
where $Z_i = (z_{i,1},\ldots,z_{i,k_i})$,
\begin{enumerate}
    \item the $z_{i,j}$-contour is positively oriented around $0, t^{N}$ but does not enclose $t^{-\alpha_\tau-M_\tau+1}$ for $1 \le \tau \le T_i$;
    \item the contours satisfy $|z_{i,j}| < t|z_{i',j'}|$ for any $(i,j) < (i',j')$ in lexicographical order;
\end{enumerate}
given that such contours exist.
\end{proposition}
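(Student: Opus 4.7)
The formula is structurally a moment identity for an ascending Macdonald process, and the natural route is to adapt the contour-integral machinery built in \cite{N,GZ,Ahn1}. The starting point is the family of Negut difference operators $\cD_N^{(k)}$, which act diagonally on the Macdonald symmetric polynomial basis,
\[ \cD_N^{(k)} P_\lambda(x_1,\ldots,x_N;q,t) = \fp_k(\lambda;q,t) \, P_\lambda(x_1,\ldots,x_N;q,t), \]
and admit an explicit contour-integral presentation whose integrand is built from the kernel $\fB(Z)$ and a multiplicative piece $\prod_{j=1}^k \prod_{i=1}^N \frac{tx_i - z_j}{x_i - z_j}$ acting on the $x$-variables, with contours nested as $|z_1| < t|z_2| < \cdots < t^{k-1}|z_k|$.

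The next step is to translate the joint moment into an action of a product of such operators on the Cauchy kernels appearing in the measure $\P_{q,t}^{\bA,N}$. Using the explicit form of $\P_{q,t}^{\bA,N}$ and the branching and skew Cauchy identities \eqref{eq:branch}, \eqref{eq:flipid}, one can telescope the transition kernels $K_{q,t}^{\alpha_T,M_T,N}$ and represent $\E[\fp_{k_1}(\lambda^{T_1}) \cdots \fp_{k_m}(\lambda^{T_m})]$ as a single operator expression in which $\cD_N^{(k_i)}$ acts on the $x$-slot of the factor $P_{\lambda^{T_i}}(x_1,\ldots,x_N)$. The hypothesis $T_1 \ge \cdots \ge T_m$ is exactly what is needed to arrange these operators into a fixed linear order; evaluating at $x = \rho_0^N$ at the end and collapsing the intermediate sums over $\lambda^1,\ldots,\lambda^{T_1}$ produces a ratio of shifted to unshifted partition functions which simplifies cleanly.

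Once this rewriting is in place, each operator $\cD_N^{(k_i)}$ contributes its own $k_i$-dimensional contour integral with integrand $\fB(Z_i) \,dZ_i$ times the evaluated multiplicative piece. On the specialization $\rho_0^N$ the piece $\prod_{j}\prod_i \frac{t x_i - z_j}{x_i - z_j}$ telescopes to $\prod_j \frac{z_j-1}{z_j - t^N}$, and the ratio of the shifted and unshifted Cauchy kernels $\Pi(\cdot, t^{\alpha_\tau}\rho_0^{M_\tau})$ over $1 \le \tau \le T_i$ contributes $\prod_{\tau=1}^{T_i} \frac{1 - t^{\alpha_\tau-1}z_j}{1 - t^{\alpha_\tau+M_\tau-1}z_j}$; together this yields $\fA_{T_i}(Z_i)$. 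The cross-factor $\fC(Z_i,Z_{i'})$ arises from normal-ordering the operators past one another, and matches the quadratic shuffle-algebra relation satisfied by the $\cD_N^{(k)}$. The contour conditions of the proposition then follow: condition (1) records the poles at $0$ (from $\sum_i z_i^{-1}$ inside $\fB$) and at $t^N$ (from the $(z_j - t^N)^{-1}$ in the $\fA_{T_i}$ factor), while excluding $t^{-\alpha_\tau - M_\tau + 1}$ (zeros of $1 - t^{\alpha_\tau + M_\tau - 1}z$), and condition (2) is precisely the nesting required to normal-order operators across levels without crossing poles of $\fC$.

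The main obstacle I expect is the bookkeeping in the second step: verifying that pushing the operator $\cD_N^{(k_i)}$ through the transition kernels at levels $T_i + 1, \ldots, T_{i'}$ and past the operator $\cD_N^{(k_{i'})}$ produces exactly the interaction $\fC(Z_i,Z_{i'})$ and no spurious residues. For single-time expectations this is already present in \cite{GZ}, and the multi-time generalization is handled in \cite{Ahn1} for broad classes of Macdonald processes; the work here is to match the specializations $\rho_0^N$ and $t^{\alpha_\tau}\rho_0^{M_\tau}$ to that general framework so that the generic moment formula specializes to the stated identity.
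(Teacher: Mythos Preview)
Your proposal is correct and follows essentially the same route as the paper: the paper's proof consists of a direct citation to \cite{Ahn1}*{Theorem 3.8} for the general ascending Macdonald process with specializations $\rho^+ = (a_1,\ldots,a_N)$, $\rho^-_\tau = (b_{\tau,1},\ldots,b_{\tau,M_\tau})$, followed by the substitution $\rho^+ = \rho_0^N$, $\rho_\tau^- = t^{\alpha_\tau}\rho_0^{M_\tau}$, which is exactly the ``match the specializations'' step you identify at the end. Your write-up simply unpacks more of the Negut-operator machinery behind that cited theorem than the paper does.
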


\begin{proof}
Choose $T \ge \max(T_1,\ldots,T_m)$. Consider the measure $\fM$ on $\Y^T$ defined by
\[ \fM(\mu^1,\ldots,\mu^T) = \frac{1}{\Pi(\rho^+;\rho^-_1,\ldots,\rho^-_T)} P_{\mu^T}(\rho^+) Q_{\mu^T/\mu^{T-1}}(\rho^-_T) Q_{\mu^{T-1}/\mu^{T-2}}(\rho^-_{T-1}) \cdots Q_{\mu^2/\mu^1}(\rho^-_2) Q_{\mu^1}(\rho^-_1) \]
where $\rho^+ = (a_1,\ldots,a_N), \rho^-_\tau = (b_{\tau,1},\ldots,b_{\tau,M_\tau})$ and $a_i$, $b_{\tau,j} > 0$ are chosen so that the quantity above is summable over $(\mu^1,\ldots,\mu^T) \in \Y^T$. Define
\[ G_\tau(z) = \prod_{i=1}^N \frac{1 - t^{-1}a_i z^{-1}}{1 - a_i z^{-1}} \prod_{\tau' = 1}^\tau \prod_{i=1}^{M_{\tau'}} \frac{1 - b_{\tau',j}z}{1 - tb_{\tau',j}z}. \]

For such a distribution, \cite{Ahn1}*{Theorem 3.8} shows that
\[ \E_{\fM}[ \fp_{k_1}(\mu^{T_1}) \cdots \fp_{k_m}(\mu^{T_m})] = \frac{1}{(2\pi \bi)^{k_1 + \cdots + k_m}} \oint \cdots \oint \prod_{1 \le i < i' \le m} \fC(Z_i,Z_{i'}) \prod_{i = 1}^m \fB(Z_i) G_{T_i}(Z_i) \, dZ_i \]
where $|Z_i| = k_i$, the $z_{i,j}$-contour is positively oriented around all the poles of $G_{T_i}$ among $0,a_1,\ldots,a_N$, but does not contain any poles of $G_{T_i}$ among $b_{\tau,1},\ldots,b_{\tau,M_\tau}$ for $1 \le \tau \le T_i$, and $|z_{i,j}| < t|z_{i',j'}|$ for $(i,j) < (i',j')$ in lexicographical order. Setting $\rho^+ = \rho_0^N$ and $\rho_\tau^- = t^{\alpha_\tau} \rho_0^{M_\tau}$ implies the desired result.
\end{proof}

\begin{proposition} \label{prop:schurobs}
Let $(\lambda^T)_{T>0} \sim \P_{q,q}^{\bA,N}$. If $T_1 \ge \cdots \ge T_m > 0$ are integers and $t_1,\ldots,t_m > 0$ are real, then
\begin{align*}
\begin{multlined}
\E\left[ \prod_{i=1}^m \boldsymbol{p}_{t_i}(\lambda^{T_i}) \right] = \frac{1}{(2\pi\bi)^m} \oint \cdots \oint \prod_{1 \le i < j \le m} \frac{(z_j - z_i)(t_i z_j - t_j z_i)}{(t_i z_j - z_i)(z_j - t_j z_i)} \\
\times \prod_{i=1}^m \left( \prod_{\ell=1}^N \frac{z_i - q^{\ell-1}}{z_i - t_i q^{\ell-1}} \cdot \prod_{\tau = 1}^{T_i} \prod_{\ell=1}^{M_\tau} \frac{1 - t_i^{-1} q^{\alpha_\tau + \ell - 1} z_i}{1 - q^{\alpha_\tau + \ell - 1} z_i}\right) \frac{dz_i}{z_i}
\end{multlined}
\end{align*}
where the $z_i$-contour $\fY_i$ is positively oriented around $0, \{t_i q^{\ell-1}\}_{\ell=1}^N$ but does not enclose $q^{-\alpha_\tau-\ell+1}$ for $1 \le \ell \le M_\tau$, $1 \le \tau \le T_i$, and is encircled by $t_j^{-1} \fY_j$, $t_i \fY_j$ for $j > i$; given that such contours exist.
\end{proposition}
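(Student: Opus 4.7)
The plan is to deduce \Cref{prop:schurobs} from \Cref{prop:macobs} by specializing the Macdonald formula to the Schur locus $t = q$ (equivalently $\theta = 1$), collapsing the $(k_1 + \cdots + k_m)$-dimensional integral into an $m$-dimensional one via residue expansion, and extending from integer to real exponents by analytic continuation, along the lines indicated in \Cref{rem:extension}.

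First specialize \Cref{prop:macobs} to $t = q$ with $k_i \in \Z_+$ and set $t_i := q^{k_i}$. A direct comparison with \Cref{def:p} shows $\boldsymbol{p}_{q^k}(\lambda) = \fp_k(\lambda; q, q)$, so the left-hand sides agree at these rational points. In the Schur limit the integrand simplifies: each factor $(z_{i,b} - (q/t)z_{i,a}) = (z_{i,b} - z_{i,a})$ in the denominators of $\fB$ and $\fC$ cancels against one copy of $(z_{i,b} - z_{i,a})$ from the numerator. The remaining poles of the integrand in a given cluster $Z_i$ are located at $z_{i,b} = q^{\pm 1} z_{i,a}$ for $a \neq b$, at $z_{i,j} = 0$, and at $z_{i,j} = q^N$, together with analogous cross-cluster poles from $\fC(Z_i, Z_{i'})$.

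Next, within each cluster $Z_i = (z_{i,1}, \ldots, z_{i,k_i})$, iteratively deform the outermost contour inward past the others in the cluster. The lexicographic constraint $|z_{i,a}| < q|z_{i,b}|$ for $a < b$ controls which poles lie inside which contours, and a careful accounting shows that the only residue configuration producing a nonvanishing contribution is the \emph{geometric chain} $z_{i,j} = q^{j-1} z_{i,1}$; competing chains give zero residues because of the $(z_{i,b} - z_{i,a})^2$ zeros remaining after the Schur cancellation. Setting $z_i := z_{i,1}$, telescoping of the products in $\fA_{T_i}$ along this chain produces exactly the factors $\prod_{\ell=1}^N (z_i - q^{\ell-1})/(z_i - t_i q^{\ell-1})$ and $\prod_{\tau, \ell}(1 - t_i^{-1} q^{\alpha_\tau + \ell - 1} z_i)/(1 - q^{\alpha_\tau + \ell - 1} z_i)$ stated in \Cref{prop:schurobs}, with $t_i = q^{k_i}$; analogous residues reduce $\fC(Z_i, Z_j)$ to the claimed cross-term.

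Finally, both sides of the identity are rational in each $t_i$ and agree on the infinite set $\{q^{k_i} : k_i \in \Z_+\}$, so analytic continuation extends it to all real $t_i > 0$, the contour constraint $|z_{i,j}| < q|z_{i',j'}|$ between clusters translating into the condition that $\fY_i$ is encircled by $t_j^{-1}\fY_j$ and $t_i \fY_j$ for $j > i$. The main obstacle is the combinatorial and analytic bookkeeping of the iterated residues: identifying precisely which poles survive the Schur cancellation, verifying that only the geometric chain contributes nontrivially, and checking that the telescoping reproduces the precise products in the statement.
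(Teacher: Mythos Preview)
Your approach is genuinely different from the paper's. The paper does \emph{not} deduce \Cref{prop:schurobs} from \Cref{prop:macobs} by residue collapse at $t=q$; instead it identifies $(\lambda^1,\ldots,\lambda^T)$ under $\P_{q,q}^{\bA,N}$ with a marginal of an explicit Schur process $\mathbb{SP}_{\rho_0^N,\rho}$ (\Cref{def:schur_process}) and then invokes \Cref{thm:schur_obs}, which is proved directly in the appendix using the first-order Macdonald (here, Schur) difference operators $D_t^{x_1,\ldots,x_n}$. Those operators diagonalize Schur polynomials with eigenvalue $\boldsymbol{p}_t(\lambda)$, so iterating them against the partition function and reading off each application as a one-dimensional residue integral yields the $m$-dimensional formula immediately, for \emph{arbitrary} real $t_i>0$, with no analytic continuation needed. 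This is shorter and avoids the residue bookkeeping entirely.

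Your route---the one flagged in \Cref{rem:extension}---is conceptually appealing because it would exhibit \Cref{prop:schurobs} as a literal specialization of the general $\theta>0$ machinery, but as written it is only a sketch. Two points deserve caution. First, the residue reduction: after setting $t=q$ the integrand of \Cref{prop:macobs} still carries the factor $\sum_{j=1}^{k_i} z_{i,j}^{-1}$ from $\fB$, and the claim that only the geometric string $z_{i,j}=q^{j-1}z_{i,1}$ survives requires tracking this sum together with the double zeros $(z_{i,b}-z_{i,a})^2$ and the poles at $q^{\pm 1}z_{i,a}$ carefully---it is correct but not as automatic as you indicate. Second, the analytic continuation: the left-hand side is not rational in $t_i$ but an infinite sum of Laurent monomials $t_i^{\lambda_j-j+1}$ weighted by Schur-process probabilities, and the right-hand side has $t_i$-dependent contours; you need to identify a common domain of analyticity containing an accumulation point of $\{q^k:k\in\Z_+\}$ before invoking the identity theorem. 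Both issues are surmountable, but the paper's operator approach sidesteps them entirely.
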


\begin{proof}
For any integers $T_1 \ge \cdots \ge T_m > 0$, we have
\begin{align*}
\P_{q,q}^{\bA}(\lambda^1 = \mu^1, \ldots,\lambda^T = \mu^T) = s_{\mu^T}(\rho_0^N) s_{\mu^T/\mu^{T-1}}(q^{\alpha_T} \rho_0^{M_T}) \cdots  s_{\mu^2/\mu^1}(q^{\alpha_2} \rho_0^{M_2}) s_{\mu^1}(q^{\alpha_1} \rho_0^{M_1})
\end{align*}
where $\rho_0^N := \rho_0^N(q,q) = (1,q,\ldots,q^{N-1})$. The distribution of $(\lambda^1,\ldots,\lambda^T)$ can be described in terms of Schur processes (see \Cref{def:schur_process}). If $\rho = (q^{\alpha_1} \rho_0^{M_1},\ldots,q^{\alpha_T} \rho_0^{M_T})$ and $(\nu^1,\ldots,\nu^{1 + \sum_{i=1}^T M_i}) \sim \mathbb{SP}_{\rho_0^N,\rho}$, then
\[ (\lambda^1,\ldots,\lambda^T) \overset{distr}{=} (\nu^{M_1}, \nu^{M_1 + M_2},\ldots, \nu^{\sum_{i=1}^T M_i}) \]
by \eqref{eq:schur_branch} and \eqref{eq:schur_process}. Thus \Cref{prop:schurobs} can be seen as a special case of \Cref{thm:schur_obs}.
\end{proof}

\subsection{Excision of Poles at Zero}

The proofs of \Cref{thm:obs,thm:obs_cpx} are essentially taking the appropriate limit transitions of \Cref{prop:macobs,prop:schurobs}, but with a preprocessing step which replaces $\fp_k(\lambda^T)$ with $\fp_k(\lambda^T) - t^{-kN}$ and $\boldsymbol{p}_t(\lambda^T)$ with $\boldsymbol{p}_t(\lambda^T) - t^{-N}$ in the expectations. This replacement corresponds to removing the pole at $0$ in the contours.

\begin{proposition} \label{prop:macobs-}
Let $(\lambda^T)_{T>0} \sim P_{q,t}^{\bA,N}$. If $T_1 \ge \cdots \ge T_m > 0$ and $k_1,\ldots,k_m > 0$ are integers, then
\[ \E\left[  \prod_{i=1}^m \left(\fp_{k_i}(\lambda^{T_i}) - t^{-k_iN}\right) \right] = \frac{1}{(2\pi \bi)^{k_1 + \cdots + k_m}} \oint \cdots \oint \prod_{1 \le i < i' \le m} \fC(Z_i, Z_{i'}) \prod_{i = 1}^m \fB(Z_i) \fA_{T_i}(Z_i) \, dZ_i \]
where $Z_i = (z_{i,1},\ldots,z_{i,k_i})$,
\begin{enumerate}
    \item the $z_{i,j}$-contour $\fY_{i,j}$ is positively oriented around the pole at $t^N$ but does not enclose $0$ or $t^{-\alpha_\tau-M_\tau+1}$ for $1 \le \tau \le T_i$;
    \item $\fY_{i,j}$ is enclosed by $t \fY_{i',j'}, q^{-1}\fY_{i',j'}$ for any $(i,j) < (i',j')$ in lexicographical order;
\end{enumerate}
given that such contours exist.
\end{proposition}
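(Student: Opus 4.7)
The plan is to derive Proposition~\ref{prop:macobs-} from Proposition~\ref{prop:macobs} by combining an inclusion-exclusion expansion of $\prod_i (\fp_{k_i}(\lambda^{T_i}) - t^{-k_i N})$ with a residue analysis at the poles $z_{i,j} = 0$. Starting from the identity
\[
\prod_{i=1}^m \bigl(\fp_{k_i}(\lambda^{T_i}) - t^{-k_i N}\bigr) = \sum_{S \subseteq [m]} (-1)^{|S|}\, t^{-k_S N} \prod_{i \notin S} \fp_{k_i}(\lambda^{T_i}), \qquad k_S := \sum_{i \in S} k_i,
\]
I would take expectation and substitute the contour integral formula from Proposition~\ref{prop:macobs} for each $\E[\prod_{i \notin S} \fp_{k_i}(\lambda^{T_i})]$. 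This expresses the left-hand side of Proposition~\ref{prop:macobs-} as an alternating sum of $2^m$ contour integrals with large contours enclosing $\{0, t^N\}$, where block $i$ is omitted precisely when $i \in S$.

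On the other hand, I would deform the target small-contour integral (with contours around $\{t^N\}$ only) by expanding each small contour into the corresponding large contour one at a time, respecting the nesting throughout. At each deformation the contour crosses only the pole at $z_{i,j} = 0$ (the cross-variable poles $z_{i',j'} = q z_{i,j}$ and $z_{i',j'} = z_{i,j}/t$ remain on the opposite side by the nesting conditions $\fY_{i,j} \subset t\fY_{i',j'},\, q^{-1}\fY_{i',j'}$), so the difference between large and small contour integrals is captured by iterated residues at $z_{i,j}=0$. Matching the resulting expansion with the inclusion-exclusion above reduces the proof to three residue facts: (a)~the cross-block factors $\fC(Z_i, Z_{i'})$ each evaluate to $1$ when any $z_{i,j}$ is set to $0$, so residues decouple across blocks; (b)~$\fA_{T_i}(0) = t^{-N}$, contributing one factor $t^{-N}$ per variable collapsed to $0$; and (c)~the iterated residue of $\fB(Z_i)$ at $z_{i,k_i}=0,\, z_{i,k_i-1}=0,\,\ldots,\,z_{i,1}=0$ (outermost first, matching the nesting $|z_{i,j}| < t|z_{i,j+1}|$) equals $(-1)^{k_i - 1}$, while residues taken over a proper nonempty subset $\{z_{i,j} : j \in J\}$, $J \subsetneq [k_i]$, collectively cancel. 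Once (a)--(c) are established, only subsets $T = \bigsqcup_{i\in S}\{(i,1),\ldots,(i,k_i)\}$ that are unions of full blocks contribute, and $\prod_{i\in S}(-1)^{k_i - 1} \cdot (-1)^{|T|} = (-1)^{|S|}$ matches the inclusion-exclusion sign.

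The main obstacle is verifying fact~(c). The full-block iterated residue I would establish by induction on $k_i$: as $z_{i,k_i}\to 0$, the cross-factors $(z_{i,k_i} - (q/t)z_{i,a}), (z_{i,k_i} - z_{i,a}), (z_{i,k_i} - qz_{i,a}), (z_{i,k_i} - z_{i,a}/t)$ for $a < k_i$ all collapse to simple monomials in $z_{i,a}$, producing a cancellation that reduces the iterated-residue computation of $\fB(Z_i)$ to an analogous computation for $\fB(z_{i,1},\ldots,z_{i,k_i - 1})$ up to an overall factor of $-1$; I have verified the sign pattern $(-1)^{k_i - 1}$ by direct calculation for $k_i = 1, 2, 3$. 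The cancellation of partial-block residues is the more delicate point: after taking residues at $\{z_{i,j}:j\in J\}$ with $J \subsetneq [k_i]$, the remaining expression is a rational function in the remaining variables whose large-contour integral, when summed with alternating signs over $J$, telescopes to zero thanks to the algebraic structure of the numerator $\sum_j (q/t)^{j-1}/z_{i,j}$ together with the consecutive-pair denominators $(z_{i,j+1} - (q/t)z_{i,j})^{-1}$ in $\fB$. Together with (a) and (b), this yields the claimed contour integral formula.
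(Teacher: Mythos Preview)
Your high-level strategy --- inclusion--exclusion on the left side combined with a residue decomposition splitting each contour into a piece around $0$ and a piece around $t^N$ --- is exactly the paper's approach. The paper phrases it as writing each large contour from Proposition~\ref{prop:macobs} as $\fY_{i,j}\cup\fZ_{i,j}$ (rather than sequential deformation), expanding the integral as a sum over subsets $\Upsilon$ of variables placed on $\fZ$ contours around $0$, and showing that after cancellation only $\Upsilon$'s that are unions of full blocks survive.

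There are, however, two concrete problems with your execution of part~(c). First, your sign bookkeeping is off: the integral of $\fB(Z_i)\fA_{T_i}(Z_i)$ over the nested $\fZ$ contours equals $+t^{-k_iN}$, not $(-1)^{k_i-1}t^{-k_iN}$. Your ``iterated residue at $0$, outermost first'' is \emph{not} the value of this integral: with the nesting $|z_{i,j}|<t|z_{i,j+1}|$, when you integrate the outer variable $z_{i,k_i}$ first over $\fZ_{i,k_i}$, the cross-term poles $z_{i,k_i}=qz_{i,j}$ and $z_{i,k_i}=z_{i,j}/t$ (for $j<k_i$) lie \emph{inside} that contour and must also be picked up. The correct order is innermost first, which gives $+1$ for $\fB$ alone. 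The additional $(-1)^{|T|}$ you invoke has no clear origin and appears to be compensating for this error. Second, and more substantively, your description of the partial-block cancellation is not the actual mechanism. What happens is: the term with $\Upsilon\cap Z_i=\{z_{i,j}:j\in J\}$ on $\fZ$ contours vanishes unless $J$ is a \emph{contiguous} run $\{r+1,\ldots,r+d\}$ (this uses the consecutive denominators $(z_{i,a+1}-\tfrac{q}{t}z_{i,a})^{-1}$ in $\fB$); then, for each fixed $0<d<k_i$, summing over the position $r$ telescopes to zero via the identity
\[
\frac{1}{w_1}-\sum_{r=1}^{k_i-d-1}\frac{(q/t)^{r-1}}{w_r w_{r+1}}\Bigl(w_{r+1}-\tfrac{q}{t}w_r\Bigr)-\frac{(q/t)^{k_i-d-1}}{w_{k_i-d}}=0,
\]
which comes directly from the numerator $\sum_{j}(q/t)^{j-1}/z_{i,j}$ of $\fB$. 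There is no ``alternating sum over $J$''; the cancellation is over positions of contiguous blocks of a fixed size.
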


\begin{remark}
There is an asymptotic version of \Cref{prop:macobs-} given by \cite{GZ}*{Lemmas 4.14-4.17}; instead of asserting equality it asserts equality after the Jacobi limit. There are overlapping ideas in the proofs of \Cref{prop:macobs-} and the asymptotic version in \cite{GZ}.
\end{remark}

\begin{proposition} \label{prop:schurobs-}
Let $(\lambda^T)_{T>0} \sim \P_{q,q}^{\bA,N}$. If $T_1 \ge \cdots \ge T_m > 0$ are integers and $t_1,\ldots,t_m > 0$ are real, then
\begin{align*}
\begin{multlined}
\E\left[ \prod_{i=1}^m \left(\boldsymbol{p}_{t_i}(\lambda^{T_i}) - t_i^{-N}\right) \right] = \frac{1}{(2\pi\bi)^m} \oint \cdots \oint \prod_{1 \le i < j \le m} \frac{(z_j - z_i)(t_i z_j - t_j z_i)}{(t_i z_j - z_i)(z_j - t_j z_i)} \\
\times \prod_{i=1}^m \left( \prod_{\ell=1}^N \frac{z_i - q^{\ell-1}}{z_i - t_i q^{\ell-1}} \cdot \prod_{\tau = 1}^{T_i} \prod_{\ell=1}^{M_\tau} \frac{1 - t_i^{-1} q^{\alpha_\tau + \ell - 1} z_i}{1 - q^{\alpha_\tau + \ell - 1} z_i}\right) \frac{dz_i}{z_i}
\end{multlined}
\end{align*}
where the $z_i$-contour $\fY_i$ is positively oriented around $\{t_i q^{\ell-1}\}_{\ell=1}^N$ but does not enclose $0$ or $q^{-\alpha_\tau-\ell+1}$ for $1 \le \ell \le M_\tau$, $1 \le \tau \le T_i$, and is enclosed by $t_j^{-1} \fY_j, t_i \fY_j$ for $j > i$; given that such contours exist.
\end{proposition}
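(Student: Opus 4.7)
The strategy is to deform the contours of \Cref{prop:schurobs} into those of \Cref{prop:schurobs-} one variable at a time, tracking the residues picked up at $z_i = 0$. Concretely, I will prove by induction on $k \in \{0, 1, \ldots, m\}$ the intermediate identity
\begin{align*}
\frac{1}{(2\pi\bi)^m}\oint_{\fY_1'}\cdots\oint_{\fY_k'}\oint_{\fY_{k+1}}\cdots\oint_{\fY_m}[\,\mbox{integrand of \Cref{prop:schurobs}}\,] = \E\left[\prod_{i=1}^k\bigl(\boldsymbol{p}_{t_i}(\lambda^{T_i})-t_i^{-N}\bigr)\prod_{i=k+1}^m\boldsymbol{p}_{t_i}(\lambda^{T_i})\right].
\end{align*}
The base case $k = 0$ is exactly \Cref{prop:schurobs}, and the case $k = m$ is the desired statement.

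For the inductive step from $k$ to $k+1$, fix $\fY_1', \ldots, \fY_k'$ and $\fY_{k+2}, \ldots, \fY_m$ and deform only the $z_{k+1}$ contour from $\fY_{k+1}$ to $\fY_{k+1}'$. The key claim is that the only singularity of the integrand (in $z_{k+1}$) encountered in the region between $\fY_{k+1}$ and $\fY_{k+1}'$ is the simple pole at $z_{k+1} = 0$ arising from the explicit $dz_{k+1}/z_{k+1}$ factor. Indeed, the poles at $z_{k+1} = t_{k+1}q^{\ell-1}$ are enclosed by $\fY_{k+1}'$ by specification; the poles at $z_{k+1} = q^{-\alpha_\tau - \ell + 1}$ lie outside $\fY_{k+1}$ and hence outside the smaller $\fY_{k+1}'$; the cross-term poles $z_{k+1} = t_{k+1}z_i$ and $z_{k+1} = z_i/t_i$ for $i < k+1$ with $z_i \in \fY_i'$ are enclosed by $\fY_{k+1}'$ because the nesting conditions $\fY_i' \subset t_{k+1}^{-1}\fY_{k+1}'$ and $\fY_i' \subset t_i\fY_{k+1}'$ are shared between the two propositions; and the cross-term poles involving $z_j \in \fY_j$ for $j > k+1$ lie outside $\fY_{k+1}$ by hypothesis, hence outside $\fY_{k+1}'$.

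Computing the residue at $z_{k+1} = 0$, I observe that each cross factor $\tfrac{(z_{k+1}-z_i)(t_iz_{k+1}-t_{k+1}z_i)}{(t_iz_{k+1}-z_i)(z_{k+1}-t_{k+1}z_i)}$ (for $i<k+1$) and $\tfrac{(z_j-z_{k+1})(t_{k+1}z_j-t_jz_{k+1})}{(t_{k+1}z_j-z_{k+1})(z_j-t_jz_{k+1})}$ (for $j>k+1$) collapses to $1$, while the $i=k+1$ factor reduces to $\prod_{\ell=1}^N\tfrac{-q^{\ell-1}}{-t_{k+1}q^{\ell-1}}=t_{k+1}^{-N}$ (the Jacobi-type factors becoming $1$). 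The residual integrand over $\fY_1', \ldots, \fY_k', \fY_{k+2}, \ldots, \fY_m$ is precisely the integrand from the inductive hypothesis at level $k$ applied to the $(m-1)$-tuple with index $k+1$ removed. Hence the residue contribution equals $t_{k+1}^{-N}\,\E\bigl[\prod_{i\le k}(\boldsymbol{p}_{t_i}-t_i^{-N})\prod_{k+1<i\le m}\boldsymbol{p}_{t_i}\bigr]$. Combining this with the level-$k$ inductive hypothesis and factoring yields the level-$(k+1)$ statement, closing the induction.

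The main obstacle is the pole-chasing in the second paragraph: one must verify that every potential singularity of the integrand other than $z_{k+1}=0$ remains on the same side of the contour throughout the deformation. This is precisely where the shared nesting conditions between \Cref{prop:schurobs} and \Cref{prop:schurobs-} are essential, as they guarantee that suitable intermediate contours exist and that cross-term poles never obstruct the shrinking.
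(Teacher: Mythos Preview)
Your approach is correct and reaches the same conclusion as the paper's, though the bookkeeping is organized differently. The paper (which defers to the proof of \Cref{prop:macobs-}) splits each $z_i$-contour simultaneously into a piece $\fY_i'$ around $\{t_i q^{\ell-1}\}$ and a small circle $\fZ_i$ around $0$, expands the integral as a sum over the $2^m$ choices of which piece each variable sits on, evaluates the residues at $0$ (each contributing a factor $t_i^{-N}$ and decoupling from the remaining variables, exactly as in your residue computation), and then recombines via inclusion--exclusion to produce $\E\bigl[\prod_i(\boldsymbol{p}_{t_i}-t_i^{-N})\bigr]$. Your sequential contour deformation is the inductive unwinding of that same inclusion--exclusion, peeling off one $t_i^{-N}$ at a time instead of all at once. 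The paper's version is more symmetric in the indices and avoids any auxiliary induction; yours keeps the single-residue calculation front and center and would be the more natural route if one did not already have the Macdonald argument in hand.

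One organizational point to tighten: in your inductive step you invoke ``the inductive hypothesis at level $k$ applied to the $(m-1)$-tuple with index $k+1$ removed,'' but your induction as stated is on $k$ for a \emph{fixed} $m$-tuple, so that $(m-1)$-variable identity is not literally part of the hypothesis. The fix is routine --- either run a double induction (outer on $m$, inner on $k$), or phrase the intermediate identity simultaneously for all tuples of size at most $m$ --- but it should be made explicit.
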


\begin{example} \label{ex:m=1}
We first prove the $m = 1$ case of \Cref{prop:macobs-} with $k_1 = k$ and $T_1 = T$. We may change the contours in \Cref{prop:macobs} to obtain the formula
\begin{align} \label{eq:yz}
\E \fp_k(\lambda^T) = \frac{1}{(2\pi \bi)^k} \oint_{\fY_1 \cup \fZ_1} \cdots \oint_{\fY_k \cup \fZ_k} \fB(Z) \fA_T(Z) dZ
\end{align}
where $Z = (z_1,\ldots,z_k)$, $\fY_i \cup \fZ_i$ is the $z_i$-contour such that: (i) $\fY_i$ is enclosed by $t\fY_j, q^{-1}\fY_j$ and $\fZ_i$ is enclosed by $t\fZ_j, q^{-1}\fZ_j$ for $1 \le i < j \le k$, (ii) $\fY_i$ is positively oriented around the pole $t^N$ but does not enclose any poles in  $\{0\}\cup \{t^{-\alpha_\tau-M_\tau+1}\}_{\tau = 1}^T$ and $\fZ_i$ is positively oriented around the pole at $0$ but does not enclose any poles in $\{t^N\} \cup \{t^{-\alpha_\tau-M_\tau+1}\}_{\tau = 1}^T$, (iii) $\fY_k$ and $\fZ_k$ are disjoint from one another.

On the other hand, \Cref{prop:macobs-} asserts that
\begin{align} \label{eq:y}
\E\left[ \fp_k(\lambda^T) - t^{-kN} \right] = \frac{1}{(2\pi \bi)^k} \oint_{\fY_1} \cdots \oint_{\fY_k} \fB(Z) \fA_T(Z) \, dZ.
\end{align}
We show how to go from (\ref{eq:yz}) to (\ref{eq:y}).

\begin{proof}[Proof of (\ref{eq:y})]
Let $\cP(Z)$ be the power set of $\{z_1,\ldots,z_k\}$. For each element $\Upsilon \in \cP(Z)$, let $\Upsilon_i = \fZ_i$ if $z_i \in \Upsilon$ and $\Upsilon_i = \fY_i$ if $z_i \notin \Upsilon$; $\Upsilon$ is exactly the set of $z_i$ such that $\Upsilon_i$ encircles $0$. Let
\[ \fI_\Upsilon = \frac{1}{(2\pi \bi)^k} \oint_{\Upsilon_1} \cdots \oint_{\Upsilon_k} \fB(Z) \fA_T(Z) \, dZ.\]
Expand (\ref{eq:yz}) so the $z_i$-contour is either $\fY_i$ or $\fZ_i$. Thus
\[ \E \fp_k(\lambda^T) = \sum_{\Upsilon \in \cP(Z)} \fI_\Upsilon. \]
Observe that $\fI_{\{z_1,\ldots,z_k\}} = \fA_T(0)^k = t^{-kN}$ by evaluating residues. Also observe that due to the $\fB(Z)$ term, $\fI_\Upsilon = 0$ unless $\Upsilon$ is of the form $\{z_{r+1},\ldots,z_{r+d}\}$. Identify $\Upsilon = \{z_{r+1},\ldots,z_{r+d}\}$ with $(r,d)$ so that
\begin{align} \label{eq:YZexp}
\E \fp_k(\lambda^T) = t^{-kN} + \fI_\emptyset + \sum_{d=1}^{k-1} \sum_{r=0}^{k-d} \fI_{(r,d)}.
\end{align}
Consider the following three cases for $(r,d) = \Upsilon = \{z_{r+1},\ldots,z_{r+d}\}$:
\begin{enumerate}[(I)]
    \item $r = 0$. Evaluating the residues at $0$ for $z_1,\ldots,z_d$, we obtain that $\fI_{(r,d)}$ is
    \[ \frac{1}{(2\pi\bi)^{k-d}} \oint \cdots \oint \frac{\fA_T(0)^d}{z_{d+1}} \frac{1}{(z_{d+2} - \frac{q}{t} z_{d+1}) \cdots (z_k - \frac{q}{t} z_{k-1})} \prod_{d < i < j \le k} \frac{(z_j - z_i)(z_j - \frac{q}{t}z_i)}{(z_j - qz_i)(z_j - \frac{1}{t} z_i)} \prod_{d < i \leq k} \fA_T(z_i) dz_i. \]
    We may change the contours so that
    \[\fI_{(r,d)} = \frac{1}{(2\pi\bi)^{k-d}} \oint \cdots \oint \frac{\fA_T(0)^d}{w_1} \fB(w_1,\ldots,w_{k-d}) \prod_{i=1}^{k-d} \fA_T(w_i) dw_i \]
    where the $w_i$-contour is $\fY_i$.
    
    \item $0 < r < k-d$. Evaluating the residues at $0$ for $z_{r+1},\ldots,z_{r+d}$, we obtain that $\fI_{(r,d)}$ is
    \[ \frac{-1}{(2\pi\bi)^{k-d}} \oint \cdots \oint \frac{\fA_T(0)^d \frac{q^{r-1}}{t^{r-1}}}{z_r z_{r+d+1}} \prod_{\substack{1 \leq i < k \\ i \notin [r,r+d]}} (z_{i+1} - \frac{q}{t} z_i)^{-1} \prod_{\substack{1 \le i < j \le k \\ i,j \notin (r,r+d]}} \frac{(z_j - z_i)(z_j - \frac{q}{t}z_i)}{(z_j - qz_i)(z_j - \frac{1}{t} z_i)} \prod_{\substack{1 \leq i \leq k \\ i \notin (r,r+d]}} \fA_T(z_i) dz_i. \]
    Reindexing the variables via
    \begin{align*}
    (z_1,\ldots,z_r) & \mapsto (w_1,\ldots,w_r), \\
    (z_{r+d+1},\ldots,z_k) & \mapsto (w_{r+1},\ldots,w_{k-d}),
    \end{align*}
    we obtain
    \[ \fI_{(r,d)} = -  \frac{1}{(2\pi\bi)^{k-d}} \oint \cdots \oint \frac{\fA_T(0)^d \frac{q^{r-1}}{t^{r-1}}}{w_r w_{r+1}} (w_{r+1} - \frac{q}{t} w_r) \fB(w_1,\ldots,w_{k-d}) \prod_{i=1}^{k-d} \fA_T(w_i) dw_i \]
    where we may take the $w_i$-contour to be $\fY_i$.
    
    \item $r = k-d$. Evaluating the residues at $0$ for $z_{r+1},\ldots,z_k$, we obtain that $\fI_{(r,d)}$ is
    \[ \frac{-1}{(2\pi\bi)^{k-d}} \oint \cdots \oint \frac{\fA_T(0)^d \frac{q^{r-1}}{t^{r-1}}}{z_r} \frac{1}{(z_2 - \frac{q}{t}z_1) \cdots (z_r - \frac{q}{t}z_{r-1})} \prod_{1 \le i < j \le r} \frac{(z_j - z_i)(z_j - \frac{q}{t}z_i)}{(z_j - qz_i)(z_j - \frac{1}{t} z_i)} \prod_{1 \le i \le r} \fA_T(z_i) dz_i. \]
    This is exactly
    \[\fI_{(r,d)} = - \frac{1}{(2\pi\bi)^{k-d}} \oint \cdots \oint \frac{\fA_T(0)^d \frac{q^{k-d-1}}{t^{k-d-1}}}{w_{k-d}} \fB(w_1,\ldots,w_{k-d}) \prod_{i=1}^{k-d} \fA_T(w_i) dw_i \]
    where the $w_i$-contour is $\fY_i$.
\end{enumerate}

Fix $0 < d < k$. Combining the three cases and letting $W = (w_1,\ldots,w_{k-d}) $, we may write
\begin{align*}
\sum_{r=0}^{k-d} \fI_{(r,d)} & = \frac{\fA_T(0)^d}{(2\pi \bi)^{k-d}} \oint \cdots \oint dW \cdot \fB(W) \fA_T(W) \left( \frac{1}{w_1} - \sum_{r=1}^{k-d-1} \frac{ \frac{q^{r-1}}{t^{r-1}}}{w_r w_{r+1}} (w_{r+1} - \frac{q}{t} w_r) - \frac{ \frac{q^{k-d-1}}{t^{k-d-1}}}{w_{k-d}}\right)
\end{align*}
because the $w_i$-contours from each of the different cases were set as $\fY_i$ for $1 \le i \le k-d$. The integral vanishes because the parenthesized term may be rewritten as
\[ \frac{1}{w_1} + \sum_{r=1}^{k-d-1} \left( - \frac{1}{w_r} \frac{q^{r-1}}{t^{r-1}} + \frac{1}{w_{r+1}}\frac{q^r}{t^r} \right) - \frac{1}{w_{k-d}} \frac{q^{k-d-1}}{t^{k-d-1}} = 0. \]
Therefore (\ref{eq:YZexp}) simplifies to
\[ \E \fp_k(\lambda^T) = t^{-kN} + \fI_\emptyset \]
which is exactly (\ref{eq:y}).
\end{proof}
\end{example}

The general proof of Proposition \ref{prop:macobs-} is just a factorization into cases of the example above and an inclusion-exclusion argument.

\begin{proof}[Proof of \Cref{prop:macobs-}]
Define contours $\fY_{i,j}, \fZ_{i,j}$ such that (i) $\fY_{i,j}$ enclosed by $t\fY_{i',j'}, q^{-1} \fY_{i',j'}$ and $\fZ_{i,j}$ is enclosed by $t\fZ_{i',j'}, q^{-1} \fY_{i',j'}$ whenever $(i,j) < (i',j')$ in lexicographical order, (ii) $\fY_{i,j}$ is positively oriented around $t^N$ but does not enclose $0$ or $t^{-\alpha_\tau-M_\tau+1}$ for $1 \le \tau \le T_i$ and $\fZ_{i,j}$ is positively oriented around $0$ but does not enclose $t^N$ or $t^{-\alpha_\tau-M_\tau+1}$ for $1 \le \tau \le T_i$, (iii) $\fY_{m,k_m}$ and $\fZ_{m,k_m}$ are disjoint from one another. Note that if contours $\fY_{i,j}$ ($1 \le j \le k_i, 1 \le i \le m$) satisfying (i) and (ii) exist, then the existence of contours $\fZ_{i,j}$ ($1 \le j \le k_i, 1 \le i \le m$) satisfying (i), (ii), (iii) is guaranteed by choosing these contours close enough to $0$.

Let $\cP(Z_1,\ldots,Z_m)$ be the power set of $\{z_{i,j}\}_{1 \le j \le k_i,1 \le i \le m}$. For each element $\Upsilon \in \cP(Z_1,\ldots,Z_m)$, let $\Upsilon_{i,j} = \fZ_{i,j}$ if $z_{i,j} \in \Upsilon$ and $\Upsilon_{i,j} = \fY_{i,j}$ if $z_{i,j} \notin \Upsilon$. Let
\[ \fI_\Upsilon = \frac{1}{(2\pi\bi)^{k_1 + \cdots + k_m}} \oint_{\Upsilon_{1,1}} \cdots \oint_{\Upsilon_{m,k_m}} \prod_{1 \le i < i' \le m} \fC(Z_i,Z_{i'}) \prod_{i=1}^m \fB(Z_i) \fA_{T_i}(Z_i) dZ_i. \]
As in Example \ref{ex:m=1}, we can use Proposition \ref{prop:macobs} and expand the contours as either $\fY_{i,j}$ or $\fZ_{i,j}$ so that
\[ \E\left[ \fp_{k_1}(\lambda^{T_1}) \cdots \fp_{k_m}(\lambda^{T_m}) \right] = \sum_{\Upsilon \in \cP(Z_1,\ldots,Z_m)} \fI_\Upsilon. \]
For an analogous reason as in Example \ref{ex:m=1}, $\fI_\Upsilon = 0$ unless $\Upsilon \cap Z_i$ has the form $\{z_{i,r_i + 1},\ldots,z_{i,r_i + d_i}\}$ for each $1 \le i \le m$ in which case we identify $\Upsilon$ with the ordered pair $(\vec{r},\vec{d})$ where $\vec{r} = (r_1,\ldots,r_m)$ and $\vec{d} = (d_1,\ldots,d_m)$; if $\Upsilon \cap Z_i = \emptyset$ then $d_i = 0$ and take $r_i = 0$ as convention. Let $\cR_{\vec{d}}$ denote the set of $\vec{r}$ where $0 \le r_i \le k_i - d_i$ if $d_i > 0$ and $r_i = 0$ if $d_i = 0$. Then
\begin{align} \label{eq:(r,d)sum}
\E\left[ \fp_{k_1}(\lambda^{T_1}) \cdots \fp_{k_m}(\lambda^{T_m}) \right] = \sum_{\substack{0 \le d_i \le k_i \\ 1 \le i \le m}} \sum_{\vec{r} \in \cR_{\vec{d}}} \fI_{(\vec{r},\vec{d})}.
\end{align}

Fix $1 \le i \le k$. Let $W = (w_1,\ldots,w_k)$, define $A^{(i)}_{0,0}(W) = 1$ and $A^{(i)}_{0,k}(W) = \fA_{T_i}(0)^k = t^{-kN}$. For $0 < d < k$ and $0 \le r \le k-d$ define
\[ A^{(i)}_{r,d}(W) = \left\{ \begin{array}{ll}
\displaystyle \fA_{T_i}(0)^d \frac{1}{w_1} & r = 0, \\
\displaystyle - \fA_{T_i}(0)^d \frac{\frac{q^{r-1}}{t^{r-1}}}{w_r w_{r+1}} (w_{r+1} - \frac{q}{t}w_r) & 0 < r < k - d, \\
\displaystyle - \fA_{T_i}(0)^d \frac{\frac{q^{k-d-1}}{t^{k-d-1}}}{w_{k-d}} & r = k-d.
\end{array} \right. \]
These were the differing parts of the integrand in Example \ref{ex:m=1} from the three cases where we showed
\[ \sum_{r=0}^{k-d} A^{(i)}_{r,d}(W) = 0. \]
Consider $\fI_{(\vec{r},\vec{d})}$ and evaluate the residues of the $\fZ_{i,j}$ contours (which are necessarily at $0$) in lexicographical order of $(i,j)$. Then by similar reasoning as in Example \ref{ex:m=1}, we have
\[ \fI_{(\vec{r},\vec{d})} = \frac{1}{(2\pi\bi)^{\sum_{i=1}^m (k_i - d_i)}} \oint \cdots \oint \prod_{1 \le i < i' \le k} \fC(W_i,W_{i'}) \prod_{i=1}^m \fB(W_i) \fA_{T_i}(W_i) A^{(i)}_{r_i,d_i}(W_i) dW_i  \]
where $W_i = (w_{i,1},\ldots,w_{i,k_i - d_i})$ and the $w_{i,j}$-contour can be taken to be $\fY_{i,j}$. Notice that for any fixed $\vec{d}$, we have
\[ \sum_{\vec{r} \in \cR_{\vec{d}}} \prod_{i=1}^m A^{(i)}_{r_i,d_i}(W_i) = \prod_{i:d_i = 0} A^{(i)}_{0,0}(W_i) \prod_{i: d_i > 0} \sum_{r_i = 0}^{k_i - d_i} A^{(i)}_{r_i,d_i}(W_i) = \prod_{i: d_i > 0} \sum_{r_i = 0}^{k_i - d_i} A^{(i)}_{r_i,d_i}(W_i)  \]
where the latter is zero unless we have that $d_i = 0$ or $d_i = k_i$ for each $1 \le i \le m$. Thus (\ref{eq:(r,d)sum}) becomes
\begin{align} \label{eq:Isum}
\E\left[ \fp_{k_1}(\lambda^{T_1}) \cdots \fp_{k_m}(\lambda^{T_m}) \right] = \sum_{\substack{\Upsilon \cap Z_i = \emptyset ~\mbox{\footnotesize or}~ Z_i \\ 1 \le i \le m}} \fI_\Upsilon.
\end{align}

Given a subset $S \subset [[1,m]]$, let
\[ \cI_S = \frac{1}{(2\pi \bi)^{\sum_{i \in S} k_i}} \oint \cdots \oint \prod_{\substack{1 \le i < i' \le k \\ i, i' \in S}} \fC(W_i,W_{i'}) \prod_{i \in S} \fB(W_i) \fA_{T_i}(W_i) dW_i \]
where the $w_{i,j}$-contour is $\fY_{i,j}$. Once again, by evaluating the residues of the $\fZ_{i,j}$ contours in lexicographical order of $(i,j)$, we have
\[ \fI_{\bigcup_{i \notin S} Z_i} = \cI_S \prod_{i \notin S} \fA_{T_i}(0)^{k_i} = t^{-N \sum_{i \in [[1,m]] \setminus S} k_i} \cdot \cI_S. \]
Since (\ref{eq:Isum}) is a sum over those $\Upsilon$ which are unions of $Z_i$, we may write
\[ \E\left[ \fp_{k_1}(\lambda^{T_1}) \cdots \fp_{k_m}(\lambda^{T_m}) \right] = \sum_{S \subset [[1,m]]} t^{-N \sum_{i \in [[1,m]] \setminus S} k_i} \cdot \cI_S. \]
In particular, observe that for any $S \subset [[1,m]]$,
\[ \E\left[ \prod_{i \in S} \fp_{k_i}(\lambda^{T_i}) \right] = \sum_{S' \subset S} t^{-N\sum_{i \in S \setminus S'} k_i} \cdot \cI_{S'}. \]
Then by inclusion-exclusion
\[ \cI_{[[1,m]]} = \sum_{S \subset [[1,m]]} (-1)^{m-|S|} t^{-N \sum_{i \in [[1,m]] \setminus S} k_i} \E\left[ \prod_{i \in S} \fp_{k_i}(\lambda^{T_i}) \right] = \E\left[ \prod_{i=1}^m \left(\fp_{k_i}(\lambda^{T_i}) - t^{-k_iN} \right) \right] \]
which completes the proof.
\end{proof}

The proof of \Cref{prop:schurobs-} follows in the same manner as that of \Cref{prop:macobs-}; we omit the proof to avoid repetition.

\subsection{Jacobi Limit}
Using the observables from Proposition \ref{prop:macobs-} and the Jacobi limit in Theorem \ref{thm:mactoprod}, we obtain observables for the $\beta$-Jacobi product process.

\begin{proof}[Proof of \Cref{thm:obs}]
For $\e > 0$, let $q = e^{-\e}, t = q^\theta$ and $(\lambda^T := \lambda^T(\e))_{T \in \Z_+} \sim \P_{q,t}^{\bA,N}$. Observe that for $k \ge 0$ we have
\[ t^{-kN} - \fp_k(\lambda^T;q,t) = (e^{\theta k \e} - 1) \sum_{i=1}^N e^{-k\e\lambda_i^T} e^{-\theta k\e (-i+1)}. \]
By the convergences
\[ \frac{e^{\theta k\e} - 1}{\theta k\e} \to 1, \quad \quad \quad \quad e^{\theta k\e(-i+1)} \to 1 \quad \quad (1 \le i \le N) \]
as $\e \to 0$ and \Cref{thm:mactoprod}, we have the convergence
\begin{align} \label{eq:pk_conv}
\lim_{\e \to 0} \E\left[  \frac{t^{-k_1N} - \fp_{k_1}(\lambda^{T_1};q,t)}{k_1\theta\e} \cdots \frac{t^{-k_mN} - \fp_{k_m}(\lambda^{T_m};q,t)}{k_m\theta\e} \right] \to \E\left[ \fP_{k_1}(\by^{(T_1)}) \cdots \fP_{k_m}(\by^{(T_m)}) \right]
\end{align}
as $\e \to 0$, where $(\by^{(T)})_{T \in \Z_+} \sim \P_\theta^{\bA,N}$.

We compute the limit on the left hand side of (\ref{eq:pk_conv}) to obtain an expression for the right hand side. Proposition \ref{prop:macobs-} yields the contour integral formula
\[ \E \left[ \prod_{i=1}^m \frac{t^{-k_iN} - \fp_{k_i}(\lambda^{T_i};q,t)}{k_i\theta\e} \right] = \frac{\prod_{i=1}^m (-k_i\theta \e)^{-1}}{(2\pi\bi)^{\sum_{i=1}^m k_i}} \oint \cdots \oint \prod_{1 \le i < i' \le m} \fC(Z_i,Z_{i'}) \prod_{i=1}^m \fB(Z_i) \fA_{T_i}(Z_i) \, dZ_i \]
where $Z_i = (z_{i,1},\ldots,z_{i,k_i})$,
\begin{itemize}
    \item the $z_{i,j}$-contour $\fY_{i,j}$ is positively oriented around $t^N$ but does not enclose $0$ or $t^{-\alpha_\tau-M_\tau +1}$ for $1 \le \tau \le T_i$;
    \item the $\fY_{i,j}$ contour is enclosed by $t \fY_{i',j'}, q^{-1} \fY_{i',j'}$ for $(i,j) < (i',j')$ in lexicographical order.
\end{itemize}

Note that we may take the $z_{i,j}$-contours close to $t^N$. Changing variables $z_{i,j} = e^{\e u_{i,j}}$, we have
\begin{align*}
\begin{multlined}
\E \left[ \prod_{i=1}^m \frac{t^{-k_iN} - \fp_{k_i}(\lambda^{T_i};q,t)}{k_i\theta\e} \right] \\
= \frac{\prod_{i=1}^m (-k_i\theta)^{-1}}{(2\pi\bi)^{\sum_{i=1}^m k_i}} \oint \cdots \oint \prod_{1 \le i < i' \le m} \fC(e^{\e U_i},e^{\e U_{i'}}) \prod_{i=1}^m \e^{k_i-1} \fB(e^{\e U_i}) \fA_{T_i}(e^{\e U_i}) \, e^{\e \sum_{j=1}^{k_i} u_{i,j}} \,dU_i
\end{multlined}
\end{align*}
where $U_i = (u_{i,1},\ldots,u_{i,k_i})$, we denote $e^{\e U_i} := (e^{\e u_{i,1}},\ldots,e^{\e u_{i,k_i}})$,
\begin{itemize}
    \item the $u_{i,j}$-contour $\fU_{i,j}$ is positively oriented around the pole at $-\theta N$ but does not enclose $\theta(\alpha_\tau + M_\tau - 1)$ for $1 \le \tau \le T_i$;
    \item whenever $(i,j) < (i',j')$ in lexicographical order, $\fU_{i,j}$ is enclosed by $\fU_{i',j'} - \theta, \fU_{i',j'} + 1$.
\end{itemize}
These contours are independent of $\e > 0$. We have the convergences
\[ \fA_T(e^{\e U_i}) \to \cA_T(U_i), \quad \quad \e^{k_i - 1} \fB(e^{\e U_i}) \to k_i \cB(U_i), \quad \quad \fC(e^{\e U_i, \e U_{i'}}) \to \cC(U_i,U_{i'}) \]
as $\e \to 0$ uniformly over $u_{i,j} \in \fU_{i,j}$ where $1 \le j \le k_i, 1 \le i \le m$. Therefore
\[ \lim_{\e \to 0} \E \left[ \prod_{i=1}^m \frac{t^{-k_iN} - \fp_{k_i}(\lambda^{T_i};q,t)}{k_i\theta\e} \right] = \frac{(-\theta)^{-m}}{(2\pi\bi)^{\sum_{i=1}^m k_i}} \oint \cdots \oint \prod_{1 \le i < i' \le m} \cC(U_i,U_{i'}) \prod_{i=1}^m \cB(U_i) \cA_{T_i}(U_i) \, dU_i \]
where the $u_{i,j}$-contour is $\fU_{i,j}$ as defined above. The theorem now follows from (\ref{eq:pk_conv}).
\end{proof}

The proof of \Cref{thm:obs_cpx} is similar. We highlight some of the modifications, but omit details to avoid repetition.

\begin{proof}[Proof of \Cref{thm:obs_cpx}]
For $\e > 0$, let $q = e^{-\e}$, $t_i = q^{c_i}$ for $1 \le i \le m$ and $(\lambda^T := \lambda^T(\e))_{T \in \Z_+} \sim \P_{q,q}^{\bA,N}$. \Cref{thm:obs_cpx} now follows by
\[ \lim_{\e \to 0} \E \left[ \prod_{i=1}^m \frac{t_i^{-k_iN} - \boldsymbol{p}_{t_i}(\lambda^{T_i})}{c_i \e} \right] = \E \left[ \prod_{i=1}^m \fP_{c_i}(\by^{(T_i)}) \right] \]
and the change of variables $z_i = e^{\e u_i}$ on the formula given by \Cref{prop:schurobs-}.
\end{proof}

\section{Global Asymptotics} \label{sec:global}

In this section we prove \Cref{thm:lln,thm:clt}. We begin by reformulating the theorems in terms of convergence of moments.

\begin{assumption} \label{assum:global}
Fix $\theta > 0$. Let $\bA := (\alpha_T,M_T)_{T \in \Z_+} \in (\R_+ \times \Z_+)^{\Z_+}$ vary with $N \in \Z_+$ such that
\[ \frac{\alpha_T}{N} \to \wh{\alpha}_T \quad \mbox{and} \quad \frac{M_T}{N} \to \wh{M}_T \]
as $N \to \infty$ for some $(\wh{\alpha}_T, \wh{M}_T) \in (\R_{\ge 0}^2)^{\Z_+}$. Let $(\by^{(T)} := \by^{(T)}(N))_{T \in \Z_+} \sim \P_\theta^{\bA,N}$. 
\end{assumption}

\begin{theorem} \label{prop:lln}
Under Assumption \ref{assum:global}, for any $k,T \in \Z_+$ we have
\[ \lim_{N\to\infty} \frac{1}{N} \E[ \fP_k(\by^{(T)}) ] = - \frac{1}{k} \cdot \frac{1}{2\pi\bi} \oint \left( \frac{v}{v+1} \prod_{\tau = 1}^T \frac{v - \wh{\alpha}_\tau }{v - \wh{\alpha}_\tau - \wh{M}_\tau} \right)^k \, dv \]
where the contour is positively oriented around the pole at $-1$ but does not enclose $\wh{\alpha}_\tau + \wh{M}_\tau$ for $1 \le \tau \le T$.
\end{theorem}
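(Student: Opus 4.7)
The starting point is the exact moment formula of \Cref{thm:obs} specialized to $m = 1$, $k_1 = k$, $T_1 = T$, which expresses $\E[\fP_k(\by^{(T)})]$ as a $k$-dimensional nested contour integral of $\cB(U)\cA_T(U)$. My plan is to rescale via $u_i = \theta N v_i$, mapping the pole of $\cA_T$ at $-\theta N$ to $v = -1$ and the blocking poles at $\theta(\alpha_\tau + M_\tau - 1)$ to the limit locations $\wh{\alpha}_\tau + \wh{M}_\tau$. Under this rescaling, $\cA_T(\theta N v) \to f(v) := \frac{v}{v+1} \prod_{\tau=1}^T \frac{v - \wh{\alpha}_\tau}{v - \wh{\alpha}_\tau - \wh{M}_\tau}$ uniformly on $N$-independent contours around $-1$.

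Next I would determine the asymptotics of $\cB(U)$. Cancelling the nearest-neighbor denominator factors $(u_{i+1} - u_i + 1 - \theta)^{-1}$ with their counterparts in the numerator product, $\cB$ factors as $\prod_{i=1}^{k-1} \frac{u_{i+1} - u_i}{(u_{i+1} - u_i + 1)(u_{i+1} - u_i - \theta)}$ times a long-range product over pairs with $j - i \ge 2$ that tends uniformly to $1$. In the rescaled variables, the nearest-neighbor factor $\frac{w}{(w+1)(w - \theta)}$ with $w = \theta N(v_{i+1} - v_i)$ has large-$|w|$ behavior $\sim 1/w$; its microscopic pole structure contributes non-trivially only in a neighborhood of $v_{i+1} = v_i$ of $v$-size $O(1/N)$. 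Combining the Jacobian $(\theta N)^k$, the prefactor $-\theta^{-1}$, and dividing by $N$ yields the target limit
\[
\lim_{N \to \infty} \frac{1}{N}\E[\fP_k(\by^{(T)})] = \frac{-1}{(2\pi\bi)^k} \oint \cdots \oint \frac{\prod_{i=1}^k f(v_i)}{\prod_{i=1}^{k-1}(v_{i+1} - v_i)} \, dV,
\]
with $v_1$'s contour strictly nested inside $v_2$'s, and so on, each enclosing $-1$ and excluding $\wh{\alpha}_\tau + \wh{M}_\tau$ for $1 \le \tau \le T$.

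Finally, I would reduce this $k$-fold integral to the claimed single contour integral via iterated residues. Integrating $v_1$ first, the only pole inside its contour is $v_1 = -1$, yielding $r/(v_2 + 1)$ where $r = \mathrm{Res}_{v=-1} f$. Iterating over $v_2, \ldots, v_k$ produces higher-order poles at $-1$; a careful bookkeeping (verified explicitly for $k = 2, 3$) gives the identity
\[
\frac{1}{(2\pi\bi)^{k-1}} \oint \cdots \oint \frac{\prod_{i=1}^k f(v_i)}{\prod_{i=1}^{k-1}(v_{i+1} - v_i)} \, dV = \frac{1}{k} \oint f(v)^k \, dv,
\]
which can be established by induction on $k$ via the Laurent expansion of $f$ at $v = -1$. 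The main technical obstacle is the short-scale analysis of $\cB$: the nested contours are separated only by $O(1/N)$, so the bulk approximation $\cB \approx \prod_i (\theta N(v_{i+1} - v_i))^{-1}$ requires justification via the residue contributions at the microscopic poles $v_{i+1} - v_i \in \{-1/(\theta N), 1/N\}$, which must be shown to reassemble into the effective single-pole kernel. This mirrors the asymptotic techniques of \cite{GZ} and \cite{Ahn1}.
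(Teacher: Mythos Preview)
Your approach is essentially the paper's: apply \Cref{thm:obs} with $m=1$, rescale $u_i = \theta N v_i$, take the pointwise limit of the integrand, and then collapse the $k$-fold integral to a single contour integral. The endpoint identity you want,
\[
\frac{1}{(2\pi\bi)^k}\oint\!\cdots\!\oint \frac{\prod_{i=1}^k f(v_i)}{\prod_{i=1}^{k-1}(v_{i+1}-v_i)}\,dV \;=\; \frac{1}{k}\cdot\frac{1}{2\pi\bi}\oint f(v)^k\,dv,
\]
is precisely \Cref{lem:dim_red} (the $s=1$, $g_1\equiv 1$ case), so your iterated-residue induction is reinventing that lemma.

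Where you go astray is in the ``main technical obstacle.'' The nesting condition on the $u$-contours is that $\fU_i$ lie inside $\fU_j-\theta$ and $\fU_j+1$; after rescaling this becomes $\fV_i\subset \fV_j - 1/N$ and $\fV_i\subset \fV_j + 1/(\theta N)$. That is a \emph{lower} bound on the separation, not an upper bound. You are therefore free to take the $v$-contours to be fixed, $N$-independent nested loops around $-1$ with $O(1)$ spacing between consecutive ones; for all large $N$ the nesting constraints are then trivially satisfied. On such contours $|v_{j}-v_{i}|$ is uniformly bounded below, so
\[
\frac{(v_j-v_i)(v_j-v_i+\tfrac{1-\theta}{\theta N})}{(v_j-v_i+\tfrac{1}{\theta N})(v_j-v_i-\tfrac{1}{N})}\to 1,
\qquad
\frac{1}{v_{i+1}-v_i+\tfrac{1-\theta}{\theta N}}\to \frac{1}{v_{i+1}-v_i},
\]
uniformly, with no microscopic-pole analysis required. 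This is exactly how the paper proceeds: after the change of variables it passes to the limit by uniform convergence on fixed contours and then invokes \Cref{lem:dim_red}. Your proposed ``reassembly of residue contributions at $v_{i+1}-v_i\in\{-1/(\theta N),1/N\}$'' is unnecessary work generated by an overly pessimistic reading of the contour conditions.
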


\begin{theorem} \label{prop:covariance}
Under Assumption \ref{assum:global}, for any positive integers $k_1,k_2,T_1 \ge T_2$ we have
\[ \lim_{N\to\infty} \cov\left(\fP_{k_1}(\by^{(T_1)}),\fP_{k_2}(\by^{(T_2)}) \right) = \frac{\theta^{-1}}{(2\pi\bi)^2} \oint \oint  \frac{1}{(v_2 - v_1)^2} \prod_{i=1}^2 \left( \frac{v_i}{v_i + 1} \prod_{\tau=1}^{T_i} \frac{v_i - \wh{\alpha}_\tau}{v_i - \wh{\alpha}_\tau - \wh{M}_\tau} \right)^{k_i} \, dv_i \]
where the $v_2$-contour encloses the $v_1$-contour, and for each $i = 1,2$, the $v_i$-contour is positively oriented around $-1$, but does not enclose $\wh{\alpha}_\tau + \wh{M}_\tau$ for $1 \le \tau \le T_i$.
\end{theorem}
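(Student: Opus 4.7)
The plan is to apply \Cref{thm:obs} with $m = 2$ to the joint moment and $m = 1$ to each individual expectation, then isolate the covariance through an algebraic decomposition of the cross-factor $\cC(U_1, U_2)$ before extracting the large-$N$ asymptotics. The key starting observation is the exact identity
\[ \frac{w(w+1-\theta)}{(w+1)(w-\theta)} = 1 + \frac{\theta}{(w+1)(w-\theta)}, \]
so that $\cC(U_1, U_2) - 1$ has no pole along the diagonal $u^{(2)}_j = u^{(1)}_i$; its only cross-singularities are at $u^{(2)}_j = u^{(1)}_i - 1$ and $u^{(2)}_j = u^{(1)}_i + \theta$, which are exactly the shifts appearing in the nesting conditions of \Cref{thm:obs}. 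Consequently, replacing $\cC$ by $1$ in the joint moment formula eliminates all cross-contour coupling, the resulting integrand factorises, and by the $m=1$ case of \Cref{thm:obs} the integral equals exactly $\E[\fP_{k_1}(\by^{(T_1)})]\E[\fP_{k_2}(\by^{(T_2)})]$. Subtracting gives
\[ \cov\bigl(\fP_{k_1}, \fP_{k_2}\bigr) = \frac{\theta^{-2}}{(2\pi\bi)^{k_1 + k_2}} \oint\oint \bigl(\cC(U_1, U_2) - 1\bigr) \prod_{i=1}^2 \cB(U_i)\cA_{T_i}(U_i) \, dU_i. \]

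Next I rescale $u_{i,j} = N\theta v_{i,j}$. Under this change of variables $\cA_{T}(U) \to \prod_{j} \phi_{T}(v_j)$, where $\phi_{T}(v) := \frac{v}{v+1}\prod_{\tau = 1}^{T} \frac{v - \wh{\alpha}_\tau}{v - \wh{\alpha}_\tau - \wh{M}_\tau}$; the combination $\cB(U)\,dU$ contributes $(N\theta)\prod_a(v_{a+1} - v_a)^{-1}\,dV$ at leading order (with $O(N^{-2})$ corrections from the Vandermonde-type factors); and Taylor-expanding yields
\[ \cC(U_1, U_2) - 1 = \frac{1}{N^2 \theta} \sum_{i,j} \frac{1}{(v_j^{(2)} - v_i^{(1)})^2} + O(N^{-3}). \]
The explicit $N$-powers cancel exactly, producing an $N$-independent leading integral. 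The nesting $\fU_{1,i} \subset \fU_{2,j} - \theta,\, \fU_{2,j} + 1$ collapses to requiring the $v^{(2)}$-contour to enclose the $v^{(1)}$-contour, matching the target. Uniform bounds on the sub-leading terms, obtained by keeping the rescaled contours separated from $-1$ and from the $\wh{\alpha}_\tau + \wh{M}_\tau$, justify interchanging the limit with the integral.

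The final step is to collapse the $(k_1 + k_2)$-dimensional limit integral into the claimed $2$-dimensional one. For each pair $(i,j)$, the contribution has the form
\[ \frac{\theta^{-1}}{(2\pi\bi)^{k_1+k_2}}\oint\oint \frac{1}{(v_j^{(2)} - v_i^{(1)})^2} \cdot \frac{\prod_a \phi_{T_1}(v^{(1)}_a)\prod_b \phi_{T_2}(v^{(2)}_b)}{\prod_a (v^{(1)}_{a+1} - v^{(1)}_a)\prod_b (v^{(2)}_{b+1} - v^{(2)}_b)}\,dV_1 dV_2. \]
I will invoke the two-point analogue of the nested-contour collapse identity
\[ \frac{1}{(2\pi\bi)^k}\oint \frac{\prod_j \phi(v_j)}{\prod_a (v_{a+1} - v_a)}\,dV = \frac{1}{k} \cdot \frac{1}{2\pi\bi}\oint \phi(w)^k\,dw \]
already underlying \Cref{prop:lln}, applied separately to the $V_1$- and $V_2$-integrals. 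Each pair $(i, j)$ then produces the same contribution $\frac{1}{k_1 k_2} \cdot \frac{\theta^{-1}}{(2\pi\bi)^2} \oint\oint \phi_{T_1}^{k_1}(w_1) \phi_{T_2}^{k_2}(w_2) (w_2 - w_1)^{-2} dw_1 dw_2$, and summing over the $k_1 k_2$ pairs cancels the combinatorial factor, yielding \eqref{eq:asymp_cov} with $m = 2$.

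\textbf{The main obstacle} will be the two-point contour collapse in the last step: one must iteratively deform the innermost $V_1$- and $V_2$-contours outward through the poles $v^{(i)}_{a+1} - v^{(i)}_a = 0$ while carefully tracking the residues produced near the shared singularity $(v_j^{(2)} - v_i^{(1)})^{-2}$, showing that the bookkeeping reduces exactly to two independent copies of the one-point collapse. Arranging the deformations so that the indices $i, j$ occupy the outermost contours at each stage — so the $(v_j^{(2)} - v_i^{(1)})^{-2}$ factor is inert during the inner reductions — should accomplish this, but the analysis requires enough control of the geometry to exclude spurious residue contributions from the denominator chain.
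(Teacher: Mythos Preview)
Your proposal is correct and follows essentially the same route as the paper's proof: write the covariance via \Cref{thm:obs} as the integral against $\cC(U_1,U_2)-1$, rescale $u_{i,j}=\theta N v_{i,j}$, Taylor-expand $\cC-1$, and reduce the $(k_1+k_2)$-fold integral to a double integral.

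The only difference is that the ``main obstacle'' you flag is not actually an obstacle. The paper handles the two-point collapse in one line by applying \Cref{lem:dim_red} (which is exactly the identity you quote) twice: first to the $V_1$-integral with $s=1$ and $g(v)=\sum_{j}(v_{2,j}-v)^{-2}$, then to the $V_2$-integral with $s=1$ and $g(w)=\frac{1}{2\pi\bi}\oint \phi_{T_1}(v_1)^{k_1}(w-v_1)^{-2}\,dv_1$. The point you are worried about---that the cross-factor $(v_{2,j}-v_{1,i})^{-2}$ might produce spurious residues during the inner deformation---cannot occur, because the $V_2$-contours strictly enclose all the $V_1$-contours, so the pole of $g$ at $v=v_{2,j}$ lies \emph{outside} every $V_1$-contour throughout the reduction. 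Thus the inner collapse is literally the one-variable case of \Cref{lem:dim_red} with an extra holomorphic factor, and no careful staging of the indices $i,j$ to the ``outermost contours'' is needed.
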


\begin{theorem} \label{prop:high_cumulant}
Under Assumption \ref{assum:global}, for any positive integers $m \ge 3, k_1,\ldots,k_m,T_1,\ldots,T_m$ we have
\[ \kappa\left( \fP_{k_1}(\by^{(T_1)}), \cdots, \fP_{k_m}(\by^{(T_m)}) \right) \to 0 \]
as $N\to\infty$ where $\kappa$ denotes the joint cumulant (see \Cref{def:cumulant1}).
\end{theorem}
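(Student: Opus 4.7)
The plan is to combine the moment formula of \Cref{thm:obs} with a cluster expansion of the cross-factors $\cC(U_i, U_{i'})$ so as to identify the joint cumulant with a sum of integrals indexed by connected graphs on $\{1,\ldots,m\}$, then conclude by power-counting in $N$.

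First, rescale the integration variables in \Cref{thm:obs} by $u_{i,j} = \theta N v_{i,j}$. In $v$-coordinates the pole at $-\theta N$ becomes $-1$, the poles $\theta(\alpha_\tau + M_\tau - 1)$ become $(\alpha_\tau+M_\tau-1)/N$ converging to $\wh\alpha_\tau + \wh M_\tau$, and the nesting conditions $\fU_{i,j} \subset \fU_{i',j'} - \theta$ and $\fU_{i,j} \subset \fU_{i',j'} + 1$ become containments involving translations of order $N^{-1}$. Thus any fixed configuration of strictly-nested $v$-plane contours enclosing $-1$ but avoiding the limit poles realizes the contour conditions for all $N$ large. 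A direct computation yields, for each pairwise factor of $\cC$,
\[ \frac{y(y+1-\theta)}{(y+1)(y-\theta)} - 1 = \frac{\theta}{(y+1)(y-\theta)}, \qquad y = u_{i',j'} - u_{i,j}, \]
so that, writing $\cD(U_i,U_{i'}) := \cC(U_i,U_{i'}) - 1$, we have $\cD = O(N^{-2})$ uniformly on the rescaled contours. A similar inspection yields $\cA_{T_i}(U_i) = O(1)$, the first product in $\cB(U_i)$ gives $O(N^{-(k_i-1)})$, and the remainder of $\cB$ is $1 + O(N^{-2})$.

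Next, expand $\prod_{i<i'}\cC(U_i, U_{i'}) = \sum_G \prod_{(i,i')\in E(G)}\cD(U_i,U_{i'})$ as a sum over graphs $G$ on the vertex set $\{1,\ldots,m\}$. Since $\cB(U_i)\cA_{T_i}(U_i)$ depends only on $U_i$, the integrand factorizes over the connected components of $G$. Grouping by the partition of $\{1,\ldots,m\}$ induced by these components yields the cluster decomposition
\[ \E\Bigl[\prod_{i\in S}\fP_{k_i}(\by^{(T_i)})\Bigr] = \sum_{\pi \in \cP(S)} \prod_{C \in \pi} J(C), \qquad S \subset \{1,\ldots,m\}, \]
where $J(C)$ is the corresponding sum of integrals over connected graphs on $C$. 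Since this identity holds for every subset, the Leonov-Shiryaev formula (Möbius inversion on the partition lattice) identifies $J(C) = \kappa\bigl((\fP_{k_i}(\by^{(T_i)}))_{i \in C}\bigr)$.

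For a connected graph $G$ on $|S|$ vertices, $|E(G)| \ge |S|-1$. Combining the Jacobian $(\theta N)^{\sum_{i\in S} k_i}$ from the rescaling, the bound $\prod_{i\in S} N^{-(k_i-1)}$ from the $\cB$ factors, and $|E(G)|$ factors of $\cD$ of size $O(N^{-2})$, the integral summand of $J(S)$ indexed by $G$ is bounded above by
\[ N^{\sum_{i \in S} k_i - \sum_{i \in S}(k_i-1) - 2|E(G)|} = N^{|S| - 2|E(G)|} \le N^{2-|S|}. \]
Since the number of such graphs is finite and dominated convergence applies along the fixed contours, $J(\{1,\ldots,m\}) = \kappa\bigl((\fP_{k_i}(\by^{(T_i)}))_{i=1}^m\bigr) = O(N^{2-m}) \to 0$ for $m \ge 3$. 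The main technical hurdle is the uniform boundedness of $\cA$, $\cB$, and $\cD$ (with the stated orders of decay) along the fixed $v$-plane contours independent of $N$, which follows directly from the explicit rational expressions together with the strict nesting that keeps all relevant differences bounded away from zero.
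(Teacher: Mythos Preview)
Your proof is correct and takes essentially the same approach as the paper: both express the cumulant as a sum over connected graphs on $\{1,\ldots,m\}$ of integrals in which each edge contributes a factor $\cC(U_i,U_j)-1 = O(N^{-2})$ after the rescaling $u_{i,j} = \theta N v_{i,j}$, and conclude from $|E(G)| \ge m-1$ that the total is $O(N^{2-m})$. The only difference is the order of operations---the paper starts from the cumulant definition and uses \Cref{lem:cumulant_inverse} to reach the connected-graph expansion of the integrand $\wt\cC$, whereas you expand $\prod_{i<i'}\cC$ over all graphs first and then invoke M\"obius inversion on the partition lattice to identify the connected part with the cumulant.
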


We show how these theorems imply \Cref{thm:lln,thm:clt}.

\begin{proof}[Proof of \Cref{thm:lln}]
This follows from the convergence in \Cref{prop:lln} and the fact that
\[ \lim_{N\to\infty} \mathrm{Var}\left( \frac{1}{N} \fP_k(\by^{(T)}) \right) = 0 \]
by \Cref{prop:covariance}.
\end{proof}

\begin{proof}[Proof of \Cref{thm:clt}]
\Cref{prop:covariance} implies the covariance formula \eqref{eq:asymp_cov}. To prove asymptotic Gaussianity, first observe that for $\nu \ge 3$ and $1 \le i_1,\ldots,i_\nu \le k$, \eqref{eq:cumulant2} gives
\[ \kappa\Bigl(\fP_{k_{i_1}}(\by^{(T_{i_1})}) - \E[\fP_{k_{i_1}}(\by^{(T_{i_1})})], \ldots, \fP_{k_{i_\nu}}(\by^{(T_{i_\nu})}) - \E[\fP_{k_{i_\nu}}(\by^{(T_{i_\nu})})] \Bigr) = \kappa\Bigl(\fP_{k_{i_1}}(\by^{(T_{i_1})}), \ldots, \fP_{k_{i_\nu}}(\by^{(T_{i_\nu})}) \Bigr). \]
By \Cref{prop:high_cumulant}, the latter converges to $0$. \Cref{lem:gauss_cumulant} then implies asymptotic Gaussianity of \eqref{eq:clt_vector}.
\end{proof}

The remainder of this section is devoted to the proofs of \Cref{prop:lln,prop:covariance,prop:high_cumulant}. We state a useful lemma.

\begin{lemma}[{\cite{GZ}*{Corollary A.2}}] \label{lem:dim_red}
Let $s \in \Z_+$ and $f,g_1\ldots,g_s$ be meromorphic functions with possible poles at $\{P_1,\ldots,P_m\}$. Then for $n \ge 2$,
\[ \frac{1}{(2\pi\bi)^n} \oint \cdots \oint \frac{1}{(v_2 - v_1) \cdots (v_n - v_{n-1})} \left( \prod_{i=1}^s \sum_{j=1}^n g_i(v_j) \right) \prod_{i=1}^n f(v_i) dv_i = \frac{n^{s-1}}{2\pi\bi} \oint f(v)^n \prod_{i=1}^s g_i(v) dv, \]
where the contours on both sides are positively oriented around $\{P_1,\ldots,P_m\}$, and for the left hand side we require the $v_j$-contour to enclose the $v_i$-contour for $1 \le i < j \le n$.
\end{lemma}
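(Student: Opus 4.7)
The plan is to prove the lemma by induction on $n \ge 2$, iteratively integrating out the innermost variable $v_1$ via Cauchy's residue theorem to reduce the problem to an $(n-1)$-dimensional integral of the same form. The key computational tool is the following consequence of summing residues on the Riemann sphere: for any rational function $A(v_1)$ whose only finite poles lie inside $\gamma_1$,
\[
\frac{1}{2\pi \bi} \oint_{\gamma_1} \frac{A(v_1)}{v_2 - v_1}\, dv_1 \;=\; A(v_2) - A(\infty),
\]
where the two right-hand contributions come from the residues at $v_1 = v_2$ (which lies outside $\gamma_1$) and at $v_1 = \infty$. I would apply this with $A(v_1) = f(v_1) \prod_{i=1}^s \bigl( g_i(v_1) + T_i \bigr)$, where $T_i := \sum_{j=2}^n g_i(v_j)$; since $f$ and the $g_i$ are rational with only finite poles at $\{P_1, \ldots, P_m\} \subset \mathrm{int}(\gamma_1)$, $A$ has finite limit at infinity and the hypothesis is satisfied.

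For the inductive step, the residue at $v_1 = v_2$ yields the coalescence term
\[
A(v_2) \;=\; f(v_2) \prod_{i=1}^s \bigl( 2 g_i(v_2) + g_i(v_3) + \cdots + g_i(v_n) \bigr),
\]
so after integrating out $v_1$, the remaining integrand over $v_2, \ldots, v_n$ has the same shape as in the lemma but in $n - 1$ variables, with $f(v_2)$ promoted to $f(v_2)^2$ and the weight at $v_2$ doubled from $g_i(v_2)$ to $2 g_i(v_2)$. Splitting $2 g_i(v_2) = g_i(v_2) + g_i(v_2)$ and expanding the product of $s$ sums, this $A(v_2)$ contribution can be matched against the induction hypothesis. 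The subtraction of $A(\infty)$ produces a lower-order correction whose contributions must be tracked and shown to telescope across iterations.

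The base case $n = 2$ is a direct residue computation: the naive factor from $A(v_2) = 2^s f(v_2) \prod_i g_i(v_2)$ is $2^s$, and this is reduced to $2^{s-1}$ after subtracting the integral of $f(v_2) A(\infty)$, which is handled by expanding $A(\infty) = f(\infty)\prod_i(g_i(\infty) + g_i(v_2))$ over subsets of $[s]$ and applying vanishing identities of the form $\tfrac{1}{2\pi\bi}\oint h\, dv = 0$ for rational $h$ with no pole at infinity. The main obstacle will be the combinatorial bookkeeping of the $A(\infty)$ corrections across all $n - 1$ iterations of the induction: a naive count would give $n^s$ in the final single-variable integral (one factor of $n$ per sum of $g_i$'s), but the lemma asserts $n^{s-1}$, so exactly one factor of $n$ must be absorbed into a telescoping cancellation. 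Since the statement appears verbatim as \cite{GZ}*{Corollary A.2}, I would refer there for the precise combinatorial identities that organize these cancellations.
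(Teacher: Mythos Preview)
The paper does not prove this lemma: it is quoted verbatim from \cite{GZ}*{Corollary A.2} and used as a black box in the proofs of Theorems~\ref{prop:lln} and~\ref{prop:covariance}. So there is no in-paper argument to compare your proposal against.

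Assessing your proposal on its own merits: the inductive strategy of integrating out the innermost variable $v_1$ via the residue-at-infinity identity
\[
\frac{1}{2\pi\bi}\oint_{\gamma_1}\frac{A(v_1)}{v_2-v_1}\,dv_1 = A(v_2)-A(\infty)
\]
is a natural line of attack, and your base case $n=2$, $s=1$ does check out (the correction $A(\infty)$ contributes exactly the terms needed to reduce the naive factor $2^s$ to $2^{s-1}$, via the identity $\mathrm{Res}_\infty(f^2g)=f(\infty)\,\mathrm{Res}_\infty(fg)+f(\infty)g(\infty)\,\mathrm{Res}_\infty f$). Two issues, however. First, the lemma as stated allows $f,g_1,\ldots,g_s$ to be \emph{meromorphic} with poles only at $P_1,\ldots,P_m$, not rational; for something like $f(v)=e^v/(v-P_1)$ the quantity $A(\infty)$ does not exist and your key identity fails. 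In the paper's applications the functions are rational, so this is harmless here, but it is a genuine discrepancy with the hypotheses as written. Second, and more substantively, you do not actually carry out the induction: you identify that the $A(\infty)$ corrections must telescope to kill exactly one factor of $n$, but then defer to \cite{GZ} for ``the precise combinatorial identities that organize these cancellations.'' Since the entire content of the lemma is that combinatorial cancellation, this leaves the proof essentially unwritten. If you want a self-contained argument, one cleaner route is to first prove the more basic identity (no $g_i$'s, general $h_j$'s in each slot) and then expand $\prod_{i=1}^s\sum_{j=1}^n g_i(v_j)$ as a sum over maps $[s]\to[n]$, which reduces the $n^{s-1}$ count to a symmetry argument rather than a telescoping one.
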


\subsection*{Proof of \texorpdfstring{\Cref{prop:lln}}{Law of Large Numbers}}

By Theorem \ref{thm:obs}, we have
\begin{align*}
\begin{multlined}
\E \left[ \fP_k(\by^{(T)}) \right] = \frac{-\theta^{-1}}{(2\pi\bi)^k} \oint \cdots \oint \frac{1}{(u_2 - u_1 + 1 - \theta) \cdots (u_k - u_{k-1} + 1 - \theta)} \\
\times \prod_{1 \le i < j \le k} \frac{(u_j - u_i)(u_j - u_i + 1 - \theta)}{(u_j - u_i + 1)(u_j - u_i - \theta)} \cdot \prod_{i=1}^k \frac{u_i}{u_i + \theta N} \prod_{\tau = 1}^T \frac{u_i - \theta (\alpha_\tau - 1)}{u_i - \theta(\alpha_\tau + M_\tau - 1)} \, du_i.
\end{multlined}
\end{align*}
where
\begin{enumerate}
    \item the $u_i$-contour $\fU_i$ is positively oriented around $-\theta N$ but does not enclose $\theta(\alpha_\tau + M_\tau - 1)$ for $1 \le \tau \le T$;
    \item whenever $i < j$, $\fU_i$ is enclosed by $\fU_j - \theta, \fU_j + 1$.
\end{enumerate}

Changing variables $u_i = v_i \theta N$, we obtain
\begin{align*}
\begin{multlined}
\frac{1}{N} \E \left[ \fP_k(\by^{(T)}) \right] = \frac{-1}{(2\pi\bi)^k} \oint \cdots \oint \frac{1}{(v_2 - v_1 + \frac{1 - \theta}{\theta N}) \cdots (v_k - v_{k-1} + \frac{1 - \theta}{\theta N})} \\
\times \prod_{1 \le i < j \le k} \frac{(v_j - v_i)(v_j - v_i + \frac{1 - \theta}{\theta N})}{(v_j - v_i + \frac{1}{\theta N})(v_j - v_i - \frac{1}{N})} \cdot \prod_{i=1}^k \frac{v_i}{v_i + 1} \prod_{\tau = 1}^T \frac{v_i - \frac{\alpha_\tau - 1}{N}}{v_i - \frac{\alpha_\tau + M_\tau - 1}{N}} \, dv_i.
\end{multlined}
\end{align*}
Then
\begin{align*}
\begin{multlined}
\lim_{N\to\infty} \frac{1}{N} \E \left[ \fP_k(\by^{(T)}) \right] = \frac{-1}{(2\pi\bi)^k} \oint \cdots \oint \frac{1}{(v_2 - v_1) \cdots (v_k - v_{k-1})} \prod_{i=1}^k \frac{v_i}{v_i + 1} \prod_{\tau = 1}^T \frac{v_i - \wh{\alpha}_\tau }{v_i - \wh{\alpha}_\tau - \wh{M}_\tau} \, dv_i.
\end{multlined}
\end{align*}
where
\begin{enumerate}
    \item the $v_i$-contour $\fV_i$ is positively oriented around $-1$ but does not enclose $\wh{\alpha}_\tau + \wh{M}_\tau$ for $1 \le \tau \le T$;
    \item whenever $i < j$, $\fV_i$ is enclosed by $\fV_j$.
\end{enumerate}
The theorem now follows by applying \Cref{lem:dim_red} to the multidimensional integral above. \qed

\subsection*{Proof of \texorpdfstring{\Cref{prop:covariance}}{Covariance}}

By Theorem \ref{thm:obs}, we have
\begin{align*}
\begin{multlined}
\cov\left(\fP_{k_1}(\by^{(T_1)}),\fP_{k_2}(\by^{(T_2)}) \right) = \frac{\theta^{-2}}{(2\pi\bi)^{k_1 + k_2}} \oint \cdots \oint \left( \cC(U_1,U_2) - 1 \right) \prod_{i=1}^2 \cB(U_i) \cA_{T_i}(U_i) \,dU_i
\end{multlined}
\end{align*}
where $U_i = (u_{i,1},\ldots,u_{i,k_i})$ for $i = 1,2$, 
\begin{enumerate}
    \item the $u_{i,j}$-contour $\fU_{i,j}$ is positively oriented around $-\theta N$ but does not enclose $\theta(\alpha_\tau + M_\tau - 1)$ for $1 \le \tau \le T_i$;
    \item whenever $(i,j) < (i',j')$ in lexicographical order, $\fU_{i,j}$ is enclosed by $\fU_{i',j'} - \theta, \fU_{i',j'} + 1$.
\end{enumerate}

Changing variables $u_{i,j} = v_{i,j} \theta N$, we obtain
\begin{align*}
\begin{multlined}
\cov\left(\fP_{k_1}(\by^{(T_1)}),\fP_{k_2}(\by^{(T_2)}) \right) = \frac{1}{(2\pi\bi)^{k_1 + k_2}} \oint \cdots \oint N^2 \left( \prod_{i=1}^{k_1} \prod_{j=1}^{k_2} \frac{(v_{2,j} - v_{1,i})(v_{2,j} - v_{1,i} + \frac{1 - \theta}{\theta N})}{(v_{2,j} - v_{1,i} + \frac{1}{\theta N})(v_{2,j} - v_{1,i} - \frac{1}{N})} - 1 \right) \\
\times \prod_{i=1}^2 \frac{1}{(v_{i,2} - v_{i,1} + \frac{1 - \theta}{\theta N}) \cdots (v_{i,k_i} - v_{i,k_i-1} + \frac{1 - \theta}{\theta N})} \left( \prod_{1 \le j < j' \le k_i} \frac{(v_{i,j'} - v_{i,j})(v_{i,j'} - v_{i,j} + \frac{1 - \theta}{\theta N})}{(v_{i,j'} - v_{i,j} + \frac{1}{\theta N})(v_{i,j'} - v_{i,j} - \frac{1}{N})} \right) \\
\times \prod_{j=1}^{k_i} \frac{v_{i,j}}{v_{i,j} + 1} \prod_{\tau = 1}^{T_i} \frac{v_{i,j} - \frac{\alpha_\tau - 1}{N}}{v_{i,j} - \frac{\alpha_\tau + M_\tau - 1}{N}} \, dv_{i,j}.
\end{multlined}
\end{align*}
We may take the $v_{i,j}$-contours to be independent of $N$ for $N$ sufficiently large. Observe that
\begin{align} \label{eq:cross_term_asymp}
\begin{split}
\prod_{i=1}^{k_1} \prod_{j=1}^{k_2} \frac{(v_{2,j} - v_{1,i})(v_{2,j} - v_{1,i} + \frac{1 - \theta}{\theta N})}{(v_{2,j} - v_{1,i} + \frac{1}{\theta N})(v_{2,j} - v_{1,i} - \frac{1}{N})} &= \prod_{i=1}^{k_1} \prod_{j=1}^{k_2} \left( 1 + \frac{\theta^{-1}}{N^2}\cdot\frac{1}{(v_{2,j} - v_{1,i} + \frac{1}{\theta N})(v_{2,j} - v_{1,i} - \frac{1}{N})} \right) \\
&= 1 + \frac{\theta^{-1}}{N^2} \sum_{i=1}^{k_1} \sum_{j=1}^{k_2} \frac{1}{(v_{2,j} - v_{1,i})^2} + O\left( \frac{1}{N^3} \right)
\end{split}
\end{align}
where the $O(1/N^3)$ term is uniform over the $v_{i,j}$-contours. Then
\begin{align*}
\begin{multlined}
\lim_{N\to\infty} \cov\left(\fP_{k_1}(\by^{(T_1)}),\fP_{k_2}(\by^{(T_2)}) \right) = \frac{\theta^{-1}}{(2\pi\bi)^{k_1 + k_2}} \oint \cdots \oint \sum_{i=1}^{k_1} \sum_{j=1}^{k_2} \frac{1}{(v_{2,j} - v_{1,i})^2} \\
\times \prod_{i=1}^2 \frac{1}{(v_{i,2} - v_{i,1}) \cdots (v_{i,k_i} - v_{i,k_i-1})} \prod_{j=1}^{k_i} \frac{v_{i,j}}{v_{i,j} + 1} \prod_{\tau = 1}^{T_i} \frac{v_{i,j} - \wh{\alpha}_\tau}{v_{i,j} - \wh{\alpha}_\tau - \wh{M}_\tau} \, dv_{i,j}.
\end{multlined}
\end{align*}
where
\begin{enumerate}
    \item the $v_{i,j}$-contour $\fV_{i,j}$ is positively oriented around $-1$ and does not enclose $\wh{\alpha}_\tau + \wh{M}_\tau$ for $1 \le \tau \le T_i$;
    \item whenever $(i,j) < (i',j')$, $\fV_{i,j}$ is enclosed by $\fV_{i',j'}$.
\end{enumerate}
The theorem now follows by applying \Cref{lem:dim_red} twice to the multidimensional integral above. \qed

\subsection*{Proof of \texorpdfstring{\Cref{prop:high_cumulant}}{High Cumulant}}

By Definition \ref{def:cumulant1}, we have
\[ \kappa\left( \fP_{k_1}(\by^{(T_1)}), \cdots, \fP_{k_m}(\by^{(T_m)}) \right) = \sum_{\substack{d > 0 \\ \{S_1,\ldots,S_d\} \in \Theta_m}} (-1)^{d-1} (d-1)! \prod_{\ell=1}^d \E \left[ \prod_{i \in S_\ell} \fP_{k_i}(\by^{(T_i)}) \right] \]
where $\Theta_m$ denotes the collection of all set partitions of $[[1,m]]$. By symmetry of the cumulant, we may assume $T_1 \ge \cdots \ge T_m$ so that the conditions of \Cref{thm:obs} are met. Then
\begin{align} \label{eq:higher_cumulant}
\kappa\left( \fP_{k_1}(\by^{(T_1)}), \cdots, \fP_{k_m}(\by^{(T_m)}) \right) = \frac{(-\theta)^{-m}}{(2\pi\bi)^{\sum_i k_i}} \oint \cdots \oint \wt{\cC}(U_1,\ldots,U_m) \prod_{i=1}^m \cB(U_i) \cA_{T_i}(U_i) \, dU_i
\end{align}
where $U_i = (u_{i,1},\ldots,u_{i,k_i})$ and
\begin{align} \label{eq:cross_term}
\wt{\cC}(U_1,\ldots,U_m) = \sum_{\substack{d > 0 \\ \{S_1,\ldots,S_d\} \in \Theta_m}} (-1)^{d-1} (d-1)! \prod_{\ell=1}^d \prod_{\substack{(i,j) \in S_\ell \\ i < j}} \cC(U_i,U_j).
\end{align}
We note that the contours are such that
\begin{enumerate}
    \item the $u_{i,j}$-contour $\fU_{i,j}$ is positively oriented around $-\theta N$ but does not enclose $\theta(\alpha_\tau + M_\tau - 1)$ for $1 \le \tau \le T_i$;
    \item whenever $(i,j) < (i',j')$, $\fU_{i,j}$ is enclosed by $\fU_{i',j'} - \theta, \fU_{i',j'} + 1$.
\end{enumerate}

Let $S \subset [[1,m]]$, $\cT(S)$ denote the set of undirected simple graphs with vertices labeled by $S$ and $\cL(S) \subset \cT(S)$ denote the subset of connected graphs. Given a graph $\Omega$, we denote by $E(\Omega)$ the edge set of $\Omega$. We claim that
\begin{align} \label{eq:mobius}
\wt{\cC}(U_1,\ldots,U_m) = \sum_{\Omega \in \cL([[1,m]])} \prod_{\substack{(i,j) \in E(\Omega) \\ i < j}} ( \cC(U_i,U_j) - 1).
\end{align}
Define
\[ \cK(S) := \sum_{\Omega \in \cL(S)} \prod_{\substack{(i,j) \in E(\Omega) \\ i < j}} (\cC(U_i,U_j) - 1), \quad \cE(S) := \sum_{\Omega \in \cT(S)} \prod_{\substack{(i,j) \in E(\Omega) \\ i < j}} (\cC(U_i,U_j) - 1). \]
Then
\[ \cE(S) = \sum_{\substack{d > 0 \\ \{S_1,\ldots,S_d\} \in \Theta_S}} \prod_{\ell=1}^d \cK(S_\ell) \]
where $\Theta_S$ is the collection of set partitions of $S$. By \Cref{lem:cumulant_inverse}, we have
\[ \cK(S) = \sum_{\substack{d > 0 \\ \{S_1,\ldots,S_d\} \in \Theta_S}} (-1)^{d-1} (d-1)! \prod_{\ell=1}^d \cE(S_\ell) \]
which agrees with the right hand side of \eqref{eq:cross_term} when $S = [[1,m]]$. Thus \eqref{eq:mobius} follows.

Applying \eqref{eq:mobius}, \eqref{eq:higher_cumulant} becomes
\begin{align*}
\kappa\left( \fP_{k_1}(\by^{(T_1)}) \cdots \fP_{k_m}(\by^{(T_m)}) \right) &= \sum_{\Omega \in \cL([[1,m]])} \frac{(-\theta)^{-m}}{(2\pi\bi)^{\sum_i k_i}} \oint \cdots \oint \! \prod_{\substack{(i,j) \in E(\Omega) \\ i < j}} (\cC(U_i,U_j) - 1) \prod_{i=1}^m \cB(U_i) \cA_{T_i}(U_i) \, dU_i \\
&=: \sum_{\Omega \in \cL([[1,m]])} \cI_\Omega.
\end{align*}

We want to show that $\cI_\Omega \sim o(1)$ for each $\Omega \in \cL([[1,m]])$ as $N\to\infty$. Fix $\Omega \in \cL([[1,m]])$. Changing variables $u_{i,j} = \theta N v_{i,j}$, we obtain
\begin{align}
\begin{multlined}
\cI_\Omega = \frac{(-N)^m}{(2\pi\bi)^{\sum_i k_i}} \oint \cdots \oint \prod_{\substack{(i,j) \in E(\Omega) \\ i < j}} \left( \prod_{a=1}^{k_i} \prod_{b=1}^{k_j} \frac{(v_{j,b} - v_{i,a})(v_{j,b} - v_{i,a} + \frac{1 - \theta}{\theta N})}{(v_{j,b} - v_{i,a} + \frac{1}{\theta N})(v_{j,b} - v_{i,a} - \frac{1}{N})} - 1 \right) \\
\times \prod_{i=1}^m \frac{1}{(v_{i,2} - v_{i,1} + \frac{1 - \theta}{\theta N}) \cdots (v_{i,k_i} - v_{i,k_i - 1} + \frac{1 - \theta}{\theta N})} \left( \prod_{1 \le a < b \le k_i} \frac{(v_{i,b} - v_{i,a})(v_{i,b} - v_{i,a} + \frac{1-\theta}{\theta N})}{(v_{i,b} - v_{i,a} + \frac{1}{\theta N})(v_{i,b} - v_{i,a} - \frac{1}{N})} \right) \\
\times \prod_{j=1}^{k_i} \frac{v_{i,j}}{v_{i,j} + 1} \prod_{\tau=1}^{T_i} \frac{v_{i,j} - \frac{\alpha_\tau - 1}{N}}{v_{i,j} - \frac{\alpha_\tau + M_\tau - 1}{N}} dv_{i,j}.
\end{multlined}
\end{align}
The contours may be chosen to be fixed for sufficiently large $N$. In the same manner that we have \eqref{eq:cross_term_asymp} in the proof of \Cref{prop:covariance}, we have
\begin{align*}
\prod_{a=1}^{k_i} \prod_{b=1}^{k_j} \frac{(v_{j,b} - v_{i,a})(v_{j,b} - v_{i,a} + \frac{1 - \theta}{\theta N})}{(v_{j,b} - v_{i,a} + \frac{1}{\theta N})(v_{j,b} - v_{i,a} - \frac{1}{N})} &= 1 + \frac{\theta^{-1}}{N^2} \sum_{a=1}^{k_i} \sum_{b=1}^{k_j} \frac{1}{(v_{j,b} - v_{i,a})^2} + O\left(\frac{1}{N^3}\right)
\end{align*}
where $O(1/N^3)$ is uniform over our contours. This implies that $\cI_\Omega \sim O(N^{m - 2|E(\Omega)|})$. For any $\Omega \in \cL([[1,m]])$, we have that $|E(\Omega)| \ge m - 1$. Therefore $\cI_\Omega \sim o(1)$ whenever $m \ge 3$. This completes the proof. \qed

\section{Local Fluctuations at the Edge} \label{sec:local}

In this section, we prove the main results \Cref{thm:local_cpx_intro,thm:local_cpx_jacobi_intro,thm:local_intro} for local fluctuations. We begin by reformulating the main theorems.

\begin{assumption} \label{assum:local}
Fix $\theta > 0$. Let $T_1,\ldots,T_m \in \Z_+$ and $\bA := (\alpha_T,M_T)_{T \in \Z_+} \in (\R_+ \times \Z_+)^{\Z_+}$ vary with $N$ such that
\begin{align*}
\begin{gathered}
\liminf_{N\to\infty} \left( \inf_{T > 0} \alpha_T \right) > 0, \quad \quad \liminf_{N\to\infty} \left( \inf_{T > 0} M_T/N \right) > 0  \\
\lim_{N \to \infty} \sum_{\tau =1}^{T_i} \left( \frac{1}{N + \alpha_\tau - 1} - \frac{1}{N + \alpha_\tau + M_\tau - 1} \right) = \gamma_i
\end{gathered}    
\end{align*}
for some $\gamma_i > 0$, $1 \le i \le m$. Let $(\by^{(T)} := \by^{(T)}(N))_{T \in \Z_+} \sim \P_\theta^{\bA,N}$.
\end{assumption}

\begin{remark}
The assumption $\liminf_N(\inf \alpha_T) > 0$ is purely technical and should be removable with some additional effort. \Cref{thm:local} below can be shown without this assumption.
\end{remark}

\begin{theorem} \label{thm:local_cpx}
Let $0 < c_1,\ldots,c_m < 1/m$. Let $T_1,\ldots,T_m \in \Z_+$ vary with $N$ such that
\[ \frac{T_i}{N} \to \wh{T}_i \]
as $N\to\infty$ for some $\wh{T}_i > 0$, $1 \le i \le m$. If $(\by^{(T)} := \by^{(T)}(N))_{T \in \Z_+}$ is distributed as the squared singular values of $Y_T = X_T \cdots X_1$ where $X_1,X_2,\ldots$ are independent $N\times N$ complex Ginibre matrices (i.e. iid standard complex entries), then
\begin{align*}
\begin{multlined}
\lim_{N\to\infty} \E \left[ \prod_{i=1}^m \frac{1}{N^{c_i(T_i + 1)}} \fP_{c_i}(\by^{(T_i)}) \right] \\
= \frac{1}{(2\pi\bi)^m} \oint \cdots \oint \det \left[ \frac{1}{u_i - u_j - c_j} \right]_{i,j=1}^m \prod_{i=1}^m \frac{\Gamma(-u_i-c_i)}{\Gamma(-u_i)} \exp\left[ -\wh{T}_i \left( \frac{c_i(c_i + 1)}{2} + c_i u_i \right) \right] du_i
\end{multlined}
\end{align*}
where the $u_i$-contour $\fU_i$ is a positively oriented contour around $-c_i,-c_i+1,-c_i+2,\ldots$ which starts and ends at $+\infty$, and $\fU_i$ is enclosed by $\fU_j - c_i, \fU_j + c_j$ for $i < j$.
\end{theorem}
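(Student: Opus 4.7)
The strategy is to realize the complex Ginibre product process as a scaling limit of the $\theta = 1$ Jacobi product process, apply the Jacobi moment formula \Cref{thm:obs_cpx}, and then perform a change of variables with Stirling asymptotics as $N \to \infty$. Taking $\alpha_\tau = 1$ (so the truncation is square, $N' = N$) and $L_\tau = N + M_\tau$, the $N \times N$ truncated Haar unitary ensembles satisfy $L_\tau \bx^{(\tau)}_{\mathrm{Jac}} \to \bx^{(\tau)}_{\mathrm{Gin}}$ in distribution as $M_\tau \to \infty$, since $\sqrt{L}\, X$ converges to a standard complex Ginibre matrix for Haar $U \in U(L)$. Because $\boxtimes_\beta$ scales multiplicatively on squared singular values, one has
\[
\by^{(T)}_{\mathrm{Gin}} = \lim_{M \to \infty} \Bigl(\prod_{\tau=1}^T L_\tau\Bigr) \by^{(T)}_{\mathrm{Jac}}, \qquad \E\Bigl[\prod_i \fP_{c_i}(\by^{(T_i)}_{\mathrm{Gin}})\Bigr] = \lim_{M \to \infty} \Bigl(\prod_i L^{c_i T_i}\Bigr) \E\Bigl[\prod_i \fP_{c_i}(\by^{(T_i)}_{\mathrm{Jac}})\Bigr].
\]

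Applying \Cref{thm:obs_cpx} and the reflection formula $\Gamma(z)\Gamma(1-z) = \pi/\sin(\pi z)$, the finite-$M$ factor satisfies
\[
\prod_{\ell=1}^M \frac{u + c - \ell}{u - \ell} = \frac{\Gamma(u+c)\Gamma(u-M)}{\Gamma(u)\Gamma(u+c-M)} \sim \frac{\Gamma(1-u)}{\Gamma(1-u-c)}\, M^{-c}
\]
as $M \to \infty$, uniformly on compact subsets of $\C \setminus \Z$. The factor $M^{-c}$ is absorbed by $L^c \sim M^c$ from the rescaling, producing an exact Ginibre moment formula with integrand
\[
\prod_{i<j} \frac{(u_j - u_i)(u_j + c_j - u_i - c_i)}{(u_j - u_i - c_i)(u_j + c_j - u_i)} \prod_i \frac{\Gamma(u_i + c_i)\Gamma(u_i + N)}{\Gamma(u_i)\Gamma(u_i + c_i + N)} \Bigl(\frac{\Gamma(1 - u_i)}{\Gamma(1 - u_i - c_i)}\Bigr)^{T_i}
\]
and the contour structure of \Cref{thm:obs_cpx} (enclosing $\{-c_i - \ell + 1\}_{\ell=1}^N$, avoiding $\{1-c_i, 2-c_i, \ldots\}$).

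To take $N \to \infty$ with $T_i = \lfloor \wh{T}_i N \rfloor$, substitute $u_i = v_i - N + 1$. The cross-particle factors $\prod_{i<j}$ are translation-invariant, and by repeated use of Euler reflection,
\[
\frac{\Gamma(u_i + c_i)\Gamma(u_i + N)}{\Gamma(u_i)\Gamma(u_i + c_i + N)} \Bigl(\frac{\Gamma(1-u_i)}{\Gamma(1-u_i-c_i)}\Bigr)^{T_i} = \frac{\Gamma(-v_i - c_i)}{\Gamma(-v_i)} \Bigl(\frac{\Gamma(N - v_i)}{\Gamma(N - v_i - c_i)}\Bigr)^{T_i + 1}.
\]
The Stirling expansion
\[
\log\frac{\Gamma(N - v)}{\Gamma(N - v - c)} = c \log N - \frac{cv + c(c+1)/2}{N} + O(N^{-2})
\]
gives $\bigl(\Gamma(N - v_i)/\Gamma(N - v_i - c_i)\bigr)^{T_i + 1} \sim N^{c_i(T_i + 1)} \exp\!\bigl[-\wh{T}_i(c_i v_i + c_i(c_i+1)/2)\bigr]$, which exactly cancels the normalization $N^{-c_i(T_i+1)}$. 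Under the shift, the Jacobi contour becomes a contour around $\{-c_i, -c_i + 1, \ldots, -c_i + N - 1\}$, limiting to the required Hankel-type contour $\fU_i$ around $\{-c_i + k\}_{k \ge 0}$. The prefactor $\prod_i (-c_i)^{-1}$ combines with the cross-term via the rational identity
\[
\prod_i \frac{-1}{c_i} \prod_{i<j} \frac{(v_j - v_i)(v_j + c_j - v_i - c_i)}{(v_j - v_i - c_i)(v_j + c_j - v_i)} = \det\!\left[\frac{1}{v_i - v_j - c_j}\right]_{i,j=1}^m,
\]
which one verifies by comparing poles and residues (or by induction on $m$), producing the determinantal form in the claim.

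The main obstacle is the analytic justification of interchanging the two limits ($M \to \infty$ and $N \to \infty$) with the contour integration. The first is straightforward because the original Jacobi contours are compact and the convergence of the integrand is uniform on them. For the second, one must select a fixed Hankel-type contour in $v$ that stably encloses $\{-c_i + k\}_{k \geq 0}$ while avoiding the poles of $\bigl(\Gamma(1-u_i)/\Gamma(1-u_i-c_i)\bigr)^{T_i}$ situated at $v_i \in \{N - c_i, N - c_i + 1, \ldots\}$, which escape to $+\infty$; the lexicographic nesting from \Cref{thm:obs_cpx} between the $v_i$-contours must also be preserved, which is made possible by the hypothesis $0 < c_i < 1/m$. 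Uniform Stirling bounds on the tails of such a Hankel contour provide the dominated-convergence majorant, completing the argument.
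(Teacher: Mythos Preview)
Your proposal is correct and follows essentially the same route as the paper: realize the Ginibre moments as the $M\to\infty$ limit of the Jacobi formula from \Cref{thm:obs_cpx} (this is the paper's \Cref{prop:obs_ginibre_cpx}), shift $u_i \mapsto u_i - N + 1$, rewrite via Euler reflection, apply Stirling-type estimates (the paper packages these as \Cref{lem:gamma_estimate,lem:GL}) to obtain both the pointwise limit and an exponentially decaying majorant along rectangular contours growing with $N$, and conclude by dominated convergence together with the Cauchy determinant identity. One minor slip: after the shift the poles to be avoided sit at $v_i \in \{N, N+1,\ldots\}$ (from $\Gamma(N-v_i)$), not at $\{N-c_i, N-c_i+1,\ldots\}$, but this does not affect the argument.
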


This gives us \Cref{thm:local_cpx_intro}. By combining \cite{LWW}*{Theorem 3.2} with \Cref{thm:local_cpx}, we obtain \Cref{thm:laplace}. The next theorem implies \Cref{thm:local_cpx_jacobi_intro}.

\begin{theorem} \label{thm:local_cpx_jacobi}
Let $0 < c_1,\ldots,c_m < \liminf_{N\to\infty}(\inf_{T > 0} \alpha_T)/m$. Under Assumption \ref{assum:local} with $\theta = 1$, we have
\begin{align*}
\begin{multlined}
\lim_{N\to\infty} \E \left[ \prod_{i=1}^m \left( \frac{1}{N} \prod_{\tau = 1}^{T_i} \frac{N + M_\tau + \alpha_\tau - 1}{N + \alpha_\tau - 1} \right)^{c_i} \fP_{c_i}(\by^{(T_i)}) \right] \\
= \frac{1}{(2\pi\bi)^m} \oint \cdots \oint \det \left[ \frac{1}{u_i - u_j - c_j} \right]_{i,j=1}^m
\prod_{i=1}^m \frac{\Gamma(-u_i-c_i)}{\Gamma(-u_i)} \exp\left[ - \gamma_i \left( \frac{c_i(c_i+1)}{2} + c_i u_i \right) \right] du_i
\end{multlined}
\end{align*}
where the $u_i$-contour $\fU_i$ is a positively oriented contour around $-c_i,-c_i+1,-c_i+2,\ldots$ which starts and ends at $+\infty$, and $\fU_i$ is enclosed by $\fU_j - c_i, \fU_j + c_j$ for $i < j$.
\end{theorem}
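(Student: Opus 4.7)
The starting point is the exact identity in \Cref{thm:obs_cpx} (with $\theta=1$), which writes $\E\bigl[\prod_{i=1}^m \fP_{c_i}(\by^{(T_i)})\bigr]$ as an $m$-fold contour integral whose $u_i$-contour $\fU_i^N$ positively encircles the $N$ poles $\{-c_i-\ell+1 : 1\le\ell\le N\}$ produced by the factor $\prod_{\ell=1}^N (u_i+\ell-1)/(u_i+c_i+\ell-1)$. To compare with the target formula, whose Hankel contour surrounds the semi-infinite ladder $\{-c_i+k : k\ge 0\}$, I would perform the change of variables $u_i = u_i' - N + 1$; the old poles become $\{-c_i+k : 0\le k\le N-1\}$, a finite truncation of the target ladder, while the $\tau$-product poles $\{\alpha_\tau+\ell-1\}$ are pushed out to $\{N+\alpha_\tau+\ell-2\}$, receding to $+\infty$. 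The cross-term $\prod_{i<j}\frac{(u_j-u_i)(u_j+c_j-u_i-c_i)}{(u_j-u_i-c_i)(u_j+c_j-u_i)}$ is invariant under the common shift, and combined with the prefactor $\prod_i(-c_i)^{-1}$ equals, by the Cauchy determinant identity, precisely $\det\bigl[(u_i-u_j-c_j)^{-1}\bigr]_{i,j=1}^m$, matching the target.

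The heart of the analysis is the single-variable integrand after the shift. A direct computation using the reflection formula $\Gamma(z)\Gamma(1-z)=\pi/\sin(\pi z)$ yields
\[
\prod_{\ell=1}^N \frac{u_i+\ell-1}{u_i+c_i+\ell-1}\bigg|_{u_i=u_i'-N+1} = \frac{\Gamma(-u_i'-c_i)}{\Gamma(-u_i')}\cdot\frac{\Gamma(N-u_i')}{\Gamma(N-u_i'-c_i)},
\]
with the intermediate $\sin(\pi u_i')/\sin(\pi(u_i'+c_i))$ factors cancelling. By Stirling, $\Gamma(N-u_i')/\Gamma(N-u_i'-c_i)=N^{c_i}(1+O(N^{-1}))$, which cancels the $N^{-c_i}$ piece of the prefactor $\bigl(\tfrac{1}{N}\prod_\tau \tfrac{N+M_\tau+\alpha_\tau-1}{N+\alpha_\tau-1}\bigr)^{c_i}$, leaving $\Gamma(-u_i'-c_i)/\Gamma(-u_i')$ in the limit.

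For the $\tau$-product, writing $\tilde\alpha_\tau := N+\alpha_\tau-1$, the shift produces $\prod_{\tau=1}^{T_i}\frac{\Gamma(\tilde\alpha_\tau+M_\tau-u_i'-c_i)\Gamma(\tilde\alpha_\tau-u_i')}{\Gamma(\tilde\alpha_\tau-u_i'-c_i)\Gamma(\tilde\alpha_\tau+M_\tau-u_i')}$. Since $\tilde\alpha_\tau$ and $\tilde\alpha_\tau+M_\tau$ are uniformly large in $\tau$ by the hypotheses on $\alpha_\tau,M_\tau$, the two-term Stirling expansion $\Gamma(y+c)/\Gamma(y)=y^c[1+c(c-1)/(2y)+O(y^{-2})]$ expresses each $\tau$-factor as
\[
\Bigl(\frac{\tilde\alpha_\tau}{\tilde\alpha_\tau+M_\tau}\Bigr)^{c_i}\left[1 - \Bigl(c_iu_i' + \frac{c_i(c_i+1)}{2}\Bigr)\Bigl(\frac{1}{\tilde\alpha_\tau}-\frac{1}{\tilde\alpha_\tau+M_\tau}\Bigr) + O(N^{-2})\right].
\]
The product of the leading factors over $\tau$ exactly cancels the surviving $\prod_\tau\bigl(\tfrac{N+M_\tau+\alpha_\tau-1}{N+\alpha_\tau-1}\bigr)^{c_i}$ from the prefactor, while the subleading corrections exponentiate—by the definition of $\gamma_i$, and using $T_i = O(N)$ to absorb the $O(N^{-2})$ tails—to yield exactly $\exp\bigl[-\gamma_i\bigl(c_iu_i' + c_i(c_i+1)/2\bigr)\bigr]$.

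The main obstacle is justifying the exchange of limit and integration. I would deform $\fU_i^N$ (after the shift) into a Hankel-type contour enclosing $\{-c_i+k\}_{k\ge 0}$ and staying uniformly away from the $\tau$-product poles $\{N+\alpha_\tau+\ell-2\}_{\ell=1}^{M_\tau}$, which recede to $+\infty$ with $N$; the bound $c_i < (\liminf_N\inf_\tau\alpha_\tau)/m$ guarantees that $m$ such contours can be simultaneously arranged to satisfy the nesting $\fU_i^N\subset(\fU_j^N-c_i)\cap(\fU_j^N+c_j)$ for $i<j$. Decay along the Hankel tails follows from the exponential factor $\exp(-\gamma_ic_i\Re u_i')$ derived above together with the polynomial decay of $\Gamma(-u_i'-c_i)/\Gamma(-u_i')$, both controlled uniformly in $N$, so dominated convergence applies. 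The most delicate step is showing that the $O(N^{-2})$ Stirling remainders, summed over $T_i=O(N)$ values of $\tau$, contribute $o(1)$ uniformly on the contour—this uses $\liminf_N\inf_\tau\alpha_\tau>0$ to keep the Gamma denominators uniformly bounded away from zero.
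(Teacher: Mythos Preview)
Your proposal is correct and follows essentially the same approach as the paper: the shift $u_i\mapsto u_i-N+1$, the Cauchy determinant identification of the cross term, the Stirling analysis of the Gamma ratios producing both the $\Gamma(-u-c_i)/\Gamma(-u)$ factor and the exponential $\exp[-\gamma_i(c_iu+c_i(c_i+1)/2)]$, and dominated convergence for the exchange of limit and integral. The paper packages the Stirling estimates into a dedicated lemma on ${}_cG_L(u):=L^{-c}\Gamma(L-u)/\Gamma(L-u-c)$ (yielding a uniform-in-$N$ bound $|{}_cG_L(u)|\le\exp(-c\Re u/L+c\eta/L)$ on a suitable rectangle), and rather than deforming to a Hankel contour it uses finite rectangular contours $\fU_i^{(N)}$ split as $\fU_i^{(N)}=\fU_{i,0}^{(N)}\cup\fU_{i,1}^{(N)}$ where $\fU_{i,0}^{(N)}$ is the intersection with the limiting semi-infinite contour; the ``far'' piece $\fU_{i,1}^{(N)}$ (near $\Re u\approx N$) is shown to contribute $o(1)$ by the same exponential bound, which makes explicit the point you allude to when you say the decay is ``controlled uniformly in $N$''.
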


Our final theorem for arbitrary $\beta = 2\theta > 0$ implies \Cref{thm:local_intro}.

\begin{theorem} \label{thm:local}
Let $k_1,\ldots,k_m$ be positive integers. Under Assumption \ref{assum:local}, we have
\begin{align*}
& \lim_{N\to\infty} \E\left[ \prod_{i=1}^m \left( \frac{1}{N} \prod_{\tau = 1}^{T_i} \frac{N + M_\tau + \alpha_\tau - 1}{N + \alpha_\tau - 1} \right)^{k_i} \fP_{k_i}(\by^{(T_i)}) \right] \\
&\quad \quad = \frac{\theta^{-m}}{(2\pi\bi)^{\sum_{i=1}^m k_i}} \oint \cdots \oint \prod_{\substack{1 \le i,i' \le m, 1 \le j \le k_i, 1 \le j' \le k_{i'} \\ (i,j) < (i,j')}} \frac{(u_{i',j'} - u_{i,j})(u_{i',j'} - u_{i,j} + 1 - \theta)}{(u_{i',j'} - u_{i,j} + 1)(u_{i',j'} - u_{i,j} - \theta)} \\
& \quad \quad \quad \quad \times \prod_{i=1}^m \left( \prod_{a=1}^{k_i-1} \frac{1}{u_{i,a} - u_{i,a+1} - 1 + \theta} \right) \left( \prod_{j=1}^{k_i} \frac{e^{-\gamma_i u_{i,j} / \theta }}{u_{i,j}/\theta} \, du_{i,j} \right)
\end{align*}
where the $u_{i,j}$-contour is positively oriented around $0$, the $u_{i',j'}$-contour encloses a $\max(\theta,1)$-neighborhood of the $u_{i,j}$-contour for $(i,j) < (i',j')$ in lexicographical order.
\end{theorem}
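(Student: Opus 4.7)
The plan is to start from the exact moment formula of \Cref{thm:obs} and pass to the limit $N\to\infty$ after translating the integration contours. Applying \Cref{thm:obs} yields
\begin{align*}
\E\left[\prod_{i=1}^m \fP_{k_i}(\by^{(T_i)})\right] = \frac{(-\theta)^{-m}}{(2\pi\bi)^{\sum_i k_i}}\oint\cdots\oint \prod_{1\le i<i'\le m} \cC(U_i,U_{i'})\prod_{i=1}^m \cB(U_i)\cA_{T_i}(U_i)\,dU_i,
\end{align*}
with the $u_{i,j}$-contours small loops around the pole $-\theta N$. I shift $u_{i,j}\mapsto u_{i,j}-\theta N$ so the translated contours are small loops around $0$; because $\inf_\tau M_\tau/N>0$, the ``outer'' poles $\theta(\alpha_\tau+M_\tau-1)$ become $\theta(N+\alpha_\tau+M_\tau-1)\to\infty$ and impose no constraint. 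The nesting $\fU_{i,j}\subset \fU_{i',j'}-\theta,\fU_{i',j'}+1$ is translation-invariant and recovers the $\max(\theta,1)$-neighborhood condition in the statement.

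Next, I identify the pointwise limit of the integrand after multiplication by the normalization factor $\prod_i\bigl(\tfrac{1}{N}\prod_{\tau=1}^{T_i}\tfrac{N+M_\tau+\alpha_\tau-1}{N+\alpha_\tau-1}\bigr)^{k_i}$. The pieces $\cB(U_i)$ and $\cC(U_i,U_{i'})$ depend only on differences and are preserved by the shift. The factor $\tfrac{1}{N}\cdot\tfrac{u_{i,j}}{u_{i,j}+\theta N}$ becomes $\tfrac{u_{i,j}-\theta N}{Nu_{i,j}}\to -\theta/u_{i,j}$ uniformly on the bounded $u_{i,j}$-contour. For the remaining piece I compute
\begin{align*}
\prod_{\tau=1}^{T_i}\frac{N+M_\tau+\alpha_\tau-1}{N+\alpha_\tau-1}\cdot\frac{u_{i,j}-\theta N-\theta(\alpha_\tau-1)}{u_{i,j}-\theta N-\theta(\alpha_\tau+M_\tau-1)} = \prod_{\tau=1}^{T_i}\frac{1-\tfrac{u_{i,j}/\theta}{N+\alpha_\tau-1}}{1-\tfrac{u_{i,j}/\theta}{N+M_\tau+\alpha_\tau-1}}.
\end{align*}
Since $\inf_\tau\alpha_\tau>0$ and $\inf_\tau M_\tau/N>0$, every factor lies in a common neighborhood of $1$ uniformly in $\tau$ and in $u_{i,j}$ on the contour. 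Expanding the logarithm and summing gives
\begin{align*}
-\frac{u_{i,j}}{\theta}\sum_{\tau=1}^{T_i}\!\left(\frac{1}{N+\alpha_\tau-1}-\frac{1}{N+M_\tau+\alpha_\tau-1}\right)+O(T_i/N^2)\longrightarrow -\gamma_i\, u_{i,j}/\theta,
\end{align*}
where the error is $o(1)$ because the lower bounds on $\alpha_\tau$ and $M_\tau/N$ together with the finiteness of $\gamma_i$ force $T_i=O(N)$. Exponentiating, the product tends to $e^{-\gamma_i u_{i,j}/\theta}$ uniformly.

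With the pointwise limits in hand and the integrand bounded on the fixed contours, dominated convergence lets me pass the limit inside. The pairwise products in $\cB(U_i)$ and in $\cC(U_i,U_{i'})$ together provide the single product over all $(i,j)<(i',j')$ in the target. The prefactor $\prod_{a=1}^{k_i-1}\tfrac{1}{u_{i,a+1}-u_{i,a}+1-\theta}$ in $\cB(U_i)$ becomes, after rewriting, $(-1)^{k_i-1}\prod_a\tfrac{1}{u_{i,a}-u_{i,a+1}-1+\theta}$. Bookkeeping the signs---the prefactor $(-\theta)^{-m}$, a $(-\theta)^{k_i}$ per $i$ from the $k_i$ copies of $-\theta/u_{i,j}$, and a $(-1)^{k_i-1}$ per $i$ from rewriting $\cB$---telescopes to $(-1)^{-2m+2\sum_i k_i}\theta^{-m}=\theta^{-m}$, matching the statement.

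The main obstacle is the uniform (in $u_{i,j}$ on a fixed bounded contour) convergence of the $T_i$-fold product to the exponential, which is delicate because $T_i$ grows linearly in $N$. This is resolved precisely by the hypotheses $\inf_\tau\alpha_\tau>0$ and $\inf_\tau M_\tau/N>0$, which guarantee that every individual factor sits in a neighborhood of $1$ uniformly in $\tau$ and $u$, so the second-order term in the Taylor expansion of the logarithm sums to $O(T_i/N^2)=o(1)$ rather than accumulating.
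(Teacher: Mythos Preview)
Your argument is correct and follows exactly the paper's approach: apply \Cref{thm:obs}, translate $u_{i,j}\mapsto u_{i,j}-\theta N$, fix the contours (possible since the poles $\theta(N+M_\tau+\alpha_\tau-1)$ move to infinity), and pass to the limit in the integrand using uniform convergence on the fixed contours. Your write-up is in fact more explicit than the paper's, which simply asserts the uniform convergence of $I_{T_i}(U)$ without spelling out the log expansion or the sign bookkeeping.

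One small caveat: the claim that the assumptions ``force $T_i=O(N)$'' is not quite right under the full generality of Assumption~\ref{assum:local}, since $\alpha_\tau$ is only bounded below; if $\alpha_\tau$ grows, $T_i$ may grow faster than $N$ while $\gamma_i$ stays finite. The error estimate survives anyway: writing $a_\tau=N+\alpha_\tau-1$ and $b_\tau=N+M_\tau+\alpha_\tau-1$, one has
\[
\frac{1}{a_\tau^2}=\Bigl(\frac{1}{M_\tau}+\frac{1}{a_\tau}\Bigr)\Bigl(\frac{1}{a_\tau}-\frac{1}{b_\tau}\Bigr)\le \frac{C}{N}\Bigl(\frac{1}{a_\tau}-\frac{1}{b_\tau}\Bigr),
\]
so the quadratic remainder sums to $O(\gamma_i/N)=o(1)$ regardless of how $T_i$ grows. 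With this patch your argument is complete.
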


Although \Cref{thm:local} is not a perfect analogue of \Cref{thm:local_cpx_jacobi}, our approach for both the general $\theta > 0$ and $\theta = 1$ cases can be viewed as stemming from the same general moment formulas (see \Cref{rem:extension}). The difference between the two cases comes from additional structure available in the $\theta = 1$ case.

The remainder of this section is organized as follows. In \Cref{ssec:ginibre}, we obtain formulas for the Ginibre case via \Cref{thm:obs} under the appropriate limit. We then gather several lemmas in \Cref{ssec:prelim_asymp}, followed by proofs of \Cref{thm:local_cpx,thm:local_cpx_jacobi,thm:local} in \Cref{ssec:local_proof}.

\subsection{Formulas for the Ginibre Case} \label{ssec:ginibre}

We obtain formulas for the joint moments of the squared singular values of complex Ginibre products by taking the appropriate limit of \Cref{thm:obs_cpx}.

\begin{lemma}
Suppose $(\bu^{(T)} := \bu^{(T)}(M))_{T \in \Z_+} \sim \P_1^{(1,M)^{T>0},N}$. Then $(M^T\bu^{(T)})_{T \in \Z_+}$ converges as $M\to\infty$ in finite dimensional distributions to the squared singular values $(\by^{(T)})_{T \in \Z_+}$ of $X_T \cdots X_1$ where $X_1,X_2,\ldots$ are independent, $N\times N$ complex Ginibre matrices.
\end{lemma}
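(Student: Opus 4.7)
The strategy is to realize the entire Markov chain jointly as the squared singular values of partial products of iid truncated unitary matrices, and then pass to the Ginibre limit matrix-by-matrix. Let $\wt X_1, \wt X_2, \ldots$ be iid random matrices, each distributed as the top-left $N \times N$ block of a Haar-distributed $U(M+N)$ matrix. By \Cref{prop:U_beta} with $N' = N$ and $L = M + N$, the squared singular values of $\wt X_i$ have distribution $\P_1^{1, M, N}$, and since $\wt X_i$ is a submatrix of a Haar unitary, its distribution is right $U(N)$-invariant. Applying \Cref{prop:markov} inductively at $\theta = 1$, the process of squared singular values of the partial products $Y_T := \wt X_T \cdots \wt X_1$ has the same joint distribution as $(\bu^{(T)})_{T \in \Z_+}$. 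Consequently, $M^T \bu^{(T)}$ has the same distribution (jointly in $T$) as the squared singular values of
\[
Z_T^{(M)} := (\sqrt{M}\, \wt X_T)\,(\sqrt{M}\, \wt X_{T-1})\cdots(\sqrt{M}\, \wt X_1).
\]

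The key analytic input is the classical fact that rescaled submatrices of Haar unitaries converge to Ginibre matrices: for $U$ Haar distributed on $U(L)$ and any fixed $N \times N$ submatrix $X$, $\sqrt{L}\, X$ converges in distribution to an $N \times N$ complex Ginibre matrix as $L \to \infty$. (One route is to realize Haar $U(L)$ as the unitary factor in the QR decomposition of an $L \times L$ complex Ginibre matrix; the entries of an $N \times N$ block of the Ginibre matrix are iid standard complex Gaussians, and the QR normalizations are concentrated near $\sqrt{L}$ for fixed $N$.) Taking $L = M + N$ and using $\sqrt{M+N}/\sqrt{M} \to 1$, the same limit holds for $\sqrt{M}\, \wt X_i$ as $M \to \infty$.

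By independence of $\wt X_1, \wt X_2, \ldots$, this marginal convergence upgrades to joint convergence
\[
(\sqrt{M}\, \wt X_1, \ldots, \sqrt{M}\, \wt X_T) \xrightarrow{d} (G_1, \ldots, G_T)
\]
as $M \to \infty$, where $G_1, \ldots, G_T$ are independent $N \times N$ complex Ginibre matrices. Matrix multiplication and the squared-singular-value map are continuous, so the joint map
\[
(A_1, \ldots, A_T) \mapsto \bigl(\mathrm{sv}^2(A_1),\; \mathrm{sv}^2(A_2 A_1),\; \ldots,\; \mathrm{sv}^2(A_T \cdots A_1)\bigr)
\]
is continuous. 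The continuous mapping theorem then yields
\[
(M \bu^{(1)},\, M^2 \bu^{(2)},\, \ldots,\, M^T \bu^{(T)}) \xrightarrow{d} (\by^{(1)},\, \by^{(2)},\, \ldots,\, \by^{(T)})
\]
for every fixed $T$, where $\by^{(T)}$ denotes the squared singular values of $G_T \cdots G_1$. Since an arbitrary finite-dimensional marginal lies within such a tuple, this establishes finite dimensional convergence.

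The only nontrivial ingredient is the Haar-to-Ginibre limit for a single submatrix; everything else is a formal consequence of the Markov transition structure from \Cref{sec:process} together with continuous mapping. An alternative approach would be to pass to the limit directly inside the moment formulas of \Cref{thm:obs_cpx} (which simplify cleanly when $\alpha_\tau = 1, M_\tau = M \to \infty$) and identify the result with known moments for Ginibre products; however, the matrix-convergence argument above is both cleaner and yields joint-in-time convergence with no extra work.
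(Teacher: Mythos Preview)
Your proof is correct and follows essentially the same approach as the paper. The paper's own argument is a brief sketch: it observes that the matrices can be taken as $N\times N$ submatrices of Haar unitaries of size $M+N$, that after renormalizing by $1/\sqrt{M}$ the entries behave like independent complex Gaussians as $M\to\infty$, and defers details to \cite{PR04}; you have simply fleshed this out by invoking the matrix realization of the process (equivalently \Cref{thm:matrix_model} with $N_T=N$, $L_T=M+N$), the Haar-to-Ginibre limit, and the continuous mapping theorem to obtain joint-in-$T$ convergence.
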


The lemma follows from the observation that $M \to \infty$ corresponds to sending the size of the ambient unitary matrix to infinity. Recall that the matrices $X_T$ in the square case can be obtained as submatrices of a Haar unitary matrix of size $M+N$. After renormalizing by $1/\sqrt{M}$ (renormalizing the squared singular value by $1/M$) the entries of the unitary matrix behave like independent standard complex Gaussians in the limit. We refer the reader to \cite{PR04} for further details on this limit transition.

\begin{proposition} \label{prop:obs_ginibre_cpx}
Suppose $(\by^{(T)})_{T \in \Z_+}$ are the squared singular values of $X_T \cdots X_1$ where $X_1,X_2,\ldots$ are independent, $N\times N$ complex Ginibre matrices. If $T_1 \ge \cdots \ge T_m > 0$ are integers and $c_1,\ldots,c_m \in \R_+$, then
\begin{align*}
\begin{multlined}
\E\left[ \prod_{i=1}^m \fP_{c_i}(\by^{(T_i)}) \right] = \frac{\prod_{i=1}^m (-c_i)^{-1}}{(2\pi\bi)^m} \oint \cdots \oint \prod_{1 \le i < j \le m} \frac{(u_j - u_i)(u_j + c_j - u_i - c_i)}{(u_j - u_i - c_i)(u_j + c_j - u_i)} \\
\times \prod_{i=1}^m \left( \prod_{\ell=1}^N \frac{u_i + \ell - 1}{u_i + c_i + \ell - 1} \cdot \left( \frac{\Gamma(1 - u_i)}{\Gamma(1 - u_i - c_i)} \right)^{T_i} \right) du_i
\end{multlined}
\end{align*}
where the $u_i$-contour $\fU_i$ is positively oriented around $\{-c_i- \ell+1\}_{\ell=1}^N$ but does not enclose $1,2,3,\ldots$, and is enclosed by $\fU_j - c_i, \fU_j + c_j$ for $j > i$; given that such contours exist.
\end{proposition}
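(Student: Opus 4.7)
The plan is to realize $(\by^{(T)})_{T \in \Z_+}$ as the $M \to \infty$ limit of the $\beta = 2$ Jacobi product process with $\alpha_\tau = 1$, $M_\tau = M$, and to push the formula of \Cref{thm:obs_cpx} through this limit. Specifically, by the lemma immediately preceding the proposition, if $(\bu^{(T)})_{T \in \Z_+} \sim \P_1^{(1,M)^{T > 0},N}$ then $M^{T}\bu^{(T)} \to \by^{(T)}$ jointly in finite-dimensional distribution, so that
\[
M^{c_i T_i} \fP_{c_i}(\bu^{(T_i)}) = \fP_{c_i}(M^{T_i}\bu^{(T_i)})
\]
converges jointly in distribution to $\fP_{c_i}(\by^{(T_i)})$.

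The first step is to rewrite the Jacobi-product factor appearing in \Cref{thm:obs_cpx} under the specialization $\alpha_\tau = 1$, $M_\tau = M$. Using the Gamma shift identity $(z-1)(z-2)\cdots(z-M) = \Gamma(z)/\Gamma(z-M)$, the inner product collapses to $\bigl(\Gamma(u_i+c_i)\,\Gamma(u_i-M)\bigr)^{T_i}/\bigl(\Gamma(u_i)\,\Gamma(u_i+c_i-M)\bigr)^{T_i}$. Applying the reflection formula $\Gamma(z)\Gamma(1-z) = \pi/\sin(\pi z)$ and the identity $\sin(\pi(u_i - M)) = (-1)^M \sin(\pi u_i)$ to the ratio $\Gamma(u_i-M)/\Gamma(u_i+c_i-M)$, both divergent at $M\to\infty$, exchanges them for the manageable form
\[
\frac{\Gamma(u_i + c_i)\,\Gamma(u_i - M)}{\Gamma(u_i)\,\Gamma(u_i + c_i - M)} = \frac{\Gamma(1-u_i)}{\Gamma(1-u_i-c_i)} \cdot \frac{\Gamma(1-u_i-c_i+M)}{\Gamma(1-u_i+M)}.
\]
Stirling's asymptotic yields $\Gamma(1-u_i-c_i+M)/\Gamma(1-u_i+M) \sim M^{-c_i}$ uniformly for $u_i$ in any compact set disjoint from $\{1,2,3,\ldots\}$, so the prefactor $M^{c_i T_i}$ multiplying the prelimit moment cancels exactly this divergence, and the $i$th factor of the integrand converges to $(\Gamma(1-u_i)/\Gamma(1-u_i-c_i))^{T_i}$.

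Second, I would fix a compact contour $\fU_i$ enclosing $\{-c_i - \ell + 1\}_{\ell=1}^N$, excluding $\{1,2,3,\ldots\}$, and satisfying the nesting conditions of the proposition. For all sufficiently large $M$, such $\fU_i$ automatically fulfills the contour requirements of \Cref{thm:obs_cpx}, since the excluded integers $1, \ldots, M$ all lie outside $\fU_i$. The uniform Stirling control just established furnishes a dominating majorant on this fixed compact contour, so dominated convergence gives
\[
\lim_{M \to \infty} \prod_{i=1}^m M^{c_i T_i}\, \E\!\left[\prod_{i=1}^m \fP_{c_i}(\bu^{(T_i)})\right] \; = \; \mbox{the right-hand side of the proposition.}
\]

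Finally, I would identify this limit with $\E[\prod_i \fP_{c_i}(\by^{(T_i)})]$ by combining the joint distributional convergence $\fP_{c_i}(M^{T_i}\bu^{(T_i)}) \to \fP_{c_i}(\by^{(T_i)})$ from the cited lemma with uniform integrability in $M$. The latter follows from standard $L^p$ tail bounds on the joint density of Ginibre product squared singular values together with their Jacobi-product approximants: applying the prelimit formula at slightly larger parameters $c_i + \delta$ shows $\sup_M \E[\prod_i \fP_{c_i}(M^{T_i}\bu^{(T_i)})^{1+\eta}] < \infty$ for some $\eta > 0$. The main obstacle is precisely this last uniform integrability step; the reflection-formula manipulation and dominated convergence in the earlier steps are routine once one identifies that the role of reflection is to turn the divergent ratio $\Gamma(u_i - M)/\Gamma(u_i + c_i - M)$ into a convergent one after extracting the expected $M^{-c_i}$ scale.
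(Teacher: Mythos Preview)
Your proposal is correct and follows essentially the same approach as the paper: specialize \Cref{thm:obs_cpx} to $\alpha_\tau=1$, $M_\tau=M$, use the lemma to identify the Ginibre limit, and pass to the limit in the integrand via Stirling. The paper writes the key asymptotic more directly as $M^c\prod_{\ell=1}^M\frac{\ell-u-c}{\ell-u}=\frac{\Gamma(1-u)}{\Gamma(1-u-c)}\cdot M^c\frac{\Gamma(M+1-u-c)}{\Gamma(M+1-u)}\to\frac{\Gamma(1-u)}{\Gamma(1-u-c)}$ without the reflection-formula detour, but this is cosmetic. One genuine difference is that the paper simply asserts $\lim_{M\to\infty}\E\bigl[\prod_i M^{c_iT_i}\fP_{c_i}(\bu^{(T_i)})\bigr]=\E\bigl[\prod_i\fP_{c_i}(\by^{(T_i)})\bigr]$ from the distributional convergence, without addressing uniform integrability; you correctly flag this as the nontrivial step and propose a workable fix (boundedness of the prelimit formula at enlarged exponents --- e.g.\ using $\fP_{c_i}(\bx)^2\le N\fP_{2c_i}(\bx)$ and convergence of the formula at $2c_i$ gives $L^2$-boundedness). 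So your argument is, if anything, slightly more complete than the paper's on this point.
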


\begin{proof}
Suppose $(\bu^{(T)})_{T \in \Z_+} \sim \P_1^{(1,M)^{T>0},N}$. By \Cref{thm:obs_cpx}, we have
\begin{align*}
\E \left[ \prod_{i=1}^m \fP_{c_i}(\bu^{(T_i)}) \right] = \frac{\prod_{i=1}^m (-c_i)^{-1}}{(2\pi\bi)^m} \oint \cdots \oint \prod_{1 \le i < j \le m} \frac{(u_j - u_i)(u_j + c_j - u_i - c_i)}{(u_j - u_i - c_i)(u_j + c_j - u_i)} \\
\times \prod_{i=1}^m \left( \prod_{\ell=1}^N \frac{u_i + \ell - 1}{u_i + c_i + \ell - 1} \cdot \left( \prod_{\ell=1}^M \frac{\ell - u_i - c_i}{\ell - u_i} \right)^{T_i} \right) du_i
\end{align*}
where the $u_i$-contour $\fU_i$ is positively oriented around $\{-c_i -\ell+1\}_{i=1}^N$ but does not enclose $1,\ldots,M$, and is enclosed by $\fU_j - c_i, \fU_j + c_j$ for $j > i$. Taking the Ginibre limit gives
\[ \lim_{M \to\infty} \E \left[ \prod_{i=1}^m M^{c_i T_i} \fP_{c_i}(\bu^{(T_i)}) \right] = \E \left[ \prod_{i=1}^m \fP_{c_i}(\by^{(T_i)}) \right]. \]
This convergence, along with the following asymptotics, then implies this proposition:
\begin{align*}
\lim_{M\to\infty} M^c \prod_{\ell=1}^M \frac{\ell - u - c}{\ell - u} &= \frac{\Gamma(1 - u)}{\Gamma(1 - u - c)} \lim_{M\to\infty} M^c \frac{\Gamma(M + 1 - u - c)}{\Gamma(M + 1 - u)} = \frac{\Gamma(1 - u)}{\Gamma(1 - u - c)}
\end{align*}
uniformly over compact subsets of $\C$ by Stirling's approximation for the Gamma function (or see \Cref{lem:gamma_estimate}).
\end{proof}

\subsection{Preliminary Asymptotics} \label{ssec:prelim_asymp}

\begin{lemma} \label{lem:gamma_estimate}
For any $c > 0$, we have
\[ \log \frac{\Gamma(z+c)}{\Gamma(z)} = c \log(z+c) - \frac{1}{2} \frac{c(c+1)}{z} + R(z) \]
where the remainder satisfies
\[ |R(z)| \le \frac{C_\delta}{|z|^2} \]
over $\{z \in \C: \Re z \ge 0, |z| \ge c(1+\delta)\}$ for any fixed $\delta > 0$ with $C_\delta$ some constant depending on $\delta$.
\end{lemma}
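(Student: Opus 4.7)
The plan is to deduce this from the standard Stirling expansion of $\log \Gamma$, keeping track of explicit error bounds. Recall that in any fixed sector avoiding the non-positive real axis one has
\[
\log \Gamma(z) \;=\; \left(z-\tfrac{1}{2}\right)\log z - z + \tfrac{1}{2}\log(2\pi) + \tfrac{1}{12 z} + O(|z|^{-3}),
\]
uniformly as $|z|\to\infty$. Since we work in $\{\Re z \ge 0,\ |z|\ge c(1+\delta)\}$, both $z$ and $z+c$ lie in a closed sector strictly to the right of the negative real axis (in fact both have nonnegative real part, with $|z+c|\ge c\delta/(1+\delta)\cdot|z|$ when $\Re z\ge 0$), so the expansion applies uniformly to $\log\Gamma(z)$ and to $\log\Gamma(z+c)$.

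Next I would subtract the two expansions and group. Writing $\log(z+c) = \log z + \log(1+c/z)$ and using the Taylor expansion $\log(1+w) = w - w^2/2 + O(w^3)$, valid with a uniform constant on $|w|\le 1/(1+\delta)$, I would compute
\[
\left(z-\tfrac{1}{2}\right)\bigl[\log(z+c)-\log z\bigr] \;=\; c - \frac{c(c+1)}{2z} + O(|z|^{-2}),
\]
so that, after the $-(z+c)-(-z)=-c$ cancellation and the trivial bound $\tfrac{1}{12(z+c)}-\tfrac{1}{12z} = O(|z|^{-2})$, one obtains
\[
\log\frac{\Gamma(z+c)}{\Gamma(z)} \;=\; c\log(z+c) - \frac{c(c+1)}{2z} + O(|z|^{-2}).
\]
This is exactly the claimed identity.

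The main (and essentially only) issue is making the implicit constants uniform on the stated region: both the Stirling remainder and the Taylor remainder blow up near the singularities ($z$ near $0$ for Stirling, $|c/z|=1$ for the log expansion). The hypothesis $|z|\ge c(1+\delta)$ together with $\Re z\ge 0$ gives $|c/z|\le 1/(1+\delta)$ and $|z+c|\ge c\delta/(1+\delta)\cdot|z|$, so both expansions hold with constants depending only on $\delta$, producing the asserted bound $|R(z)|\le C_\delta/|z|^2$. If one wished to avoid citing Stirling with explicit error, an alternative would be to use the integral representation $\log\Gamma(z+c)-\log\Gamma(z) = \int_0^c \psi(z+s)\,ds$ together with the asymptotic $\psi(w) = \log w - 1/(2w) + O(|w|^{-2})$ for the digamma function, which gives the same result after elementary integration; I would regard this as a back-up rather than the primary route.
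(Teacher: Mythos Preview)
Your proof is correct and follows essentially the same route as the paper: apply Stirling's approximation to $\log\Gamma(z+c)$ and $\log\Gamma(z)$, subtract, and Taylor-expand $\log(1+c/z)$ to extract the $-\tfrac{c(c+1)}{2z}$ term, with the hypothesis $|z|\ge c(1+\delta)$ ensuring uniform constants. One small slip: the inequality you state for $|z+c|$ should read $|z+c|\ge \tfrac{\delta}{1+\delta}\,|z|$ (from $|z+c|\ge |z|-c$), not $\tfrac{c\delta}{1+\delta}\,|z|$, but this does not affect the argument.
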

\begin{proof}
By Stirling's approximation, we have
\[ \log \Gamma(z) = \left(z - \frac{1}{2}\right)\log z - z + \frac{1}{2} \log(2\pi) + \frac{1}{12 z} + \wt{R}(z), \quad \quad |\wt{R}(z)| \le \frac{C}{|z|^2} \]
for some uniform constant $C$ over $\{z \in \C: \Re z \ge 0\}$, see \cite{OLBC10}*{p141}. We have
\[ \log \frac{\Gamma(z+c)}{\Gamma(z)} = \left(z-\frac{1}{2}\right) \log\left(1 + \frac{c}{z}\right) - c + c \log(z+c) + R_1(z) \]
where
\[ |R_1(z)| \le \frac{C}{|z|^2} \]
over $\{\Re z \ge 0\}$ for some constant $C$. By expanding the first logarithm, we get
\[ \log \frac{\Gamma(z+c)}{\Gamma(z)} = \left(z-\frac{1}{2}\right) \left( \frac{c}{z} - \frac{1}{2} \frac{c^2}{z^2} \right) - c + c \log(z+c) + R_2(z) = -\frac{1}{2} \frac{c(c+1)}{z} + c \log(z+c) + R_3(z) \]
where for any given $\delta > 0$, we have
\[ \max(|R_2(z)|,|R_3(z)|) \le \frac{C_\delta}{|z|^2} \]
over $\{\Re z \ge 0, |z| \ge c(1+\delta) \}$ for some constant $C_\delta$ depending on $\delta$.
\end{proof}

Define
\begin{align} \label{eq:GL}
{}_cG_L(u) := \frac{L^{-c} \Gamma(L-u)}{\Gamma(L-u-c)}.
\end{align}

\begin{lemma} \label{lem:GL}
Fix $\delta,c,\eta > 0$ and for each $N \in \Z_+$, define the region
\[ \fR_\eta^{(N)} := \{u \in \C: |\Im u| \le \eta,\quad -\eta \le \Re u \le N - \delta \}. \]
Suppose $L:= L(N) \ge N$. Then
\begin{align} \label{eq:GL_conv}
{}_c G_L(u) & = \exp\left[ c\log\left(1 - \frac{u}{L}\right) - \frac{1}{2} \frac{c(c+1)}{L-u-c} + R(L-u-c) \right], \quad \quad u \in \fR_\eta^{(N)}, \\
\label{eq:GL_bd2}
|{}_c G_L(u)| & \le \exp\left( -c \frac{\Re u}{L} + c\frac{\eta}{L}\right), \quad \quad u \in \fR_\eta^{(N)} \cap \{u \in \C: |L-u-c| \ge \delta\}, \quad \mbox{$N$ sufficiently large}
\end{align}
where $R(z)$ is independent of $L$ and satisfies
\[ |R(L-u-c)| \le \frac{C_{\delta,c}}{|L-u-c|^2}, \quad u \in \fR_\eta^{(N)}, \quad L \ge N, \]
If we further assume that $\liminf_{N\to\infty} L/N > 1$, then
\begin{align} \label{eq:GL_bd1}
|{}_c G_L(u)| \ge \exp\left( - \frac{C_{\delta,c}}{L-N} \right), \quad u \in \fR_\eta^{(N)}
\end{align}
for $L - N$ and $N$ sufficiently large.
\end{lemma}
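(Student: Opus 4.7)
The plan is to reduce the lemma to an application of \Cref{lem:gamma_estimate} via the substitution $z = L - u - c$, then derive the three conclusions by taking real parts of the resulting expansion. Since $-c \log L + \log \Gamma(L-u) - \log \Gamma(L-u-c) = -c \log L + \log \frac{\Gamma(z+c)}{\Gamma(z)}$, the main preliminary task is verifying that $z = L - u - c$ lies in the domain where \Cref{lem:gamma_estimate} applies uniformly on $\fR_\eta^{(N)}$. For $u \in \fR_\eta^{(N)}$ and $L \ge N$, we have $\Re z = L - \Re u - c \ge \delta - c$, and for $N$ (hence $L$) large, $|z| \ge c(1+\delta')$ for some $\delta' > 0$, so \Cref{lem:gamma_estimate} applies after possibly enlarging $\delta$ or absorbing constants. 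For \eqref{eq:GL_bd2}, the extra hypothesis $|L-u-c| \ge \delta$ is precisely what is needed to keep $z$ away from the poles of $\Gamma$.

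Once the substitution is justified, \Cref{lem:gamma_estimate} immediately yields
\[
\log {}_cG_L(u) = -c\log L + c\log(L-u) - \frac{c(c+1)}{2(L-u-c)} + R(L-u-c),
\]
and combining the first two terms into $c\log(1 - u/L)$ gives \eqref{eq:GL_conv}. The remainder bound $|R(L-u-c)| \le C_{\delta,c}/|L-u-c|^2$ transfers directly from \Cref{lem:gamma_estimate}, using $|L-u-c| \ge \delta - c$ together with $L \ge N$ to make the implicit constant depend only on $\delta$ and $c$.

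For \eqref{eq:GL_bd2}, take real parts of \eqref{eq:GL_conv}. The key estimate is $\log|1 - u/L| \le \log(1 - \Re u/L) + O(1/L)$ on $\fR_\eta^{(N)}$, which follows from the triangle inequality $|1 - u/L| \le 1 - \Re u/L + |\Im u|/L \le 1 - \Re u/L + \eta/L$ together with $\log(1 + x) \le x$; this gives $c \log|1 - u/L| \le -c \Re u/L + c\eta/L + \text{lower order}$. The remaining contributions $-\tfrac{1}{2} \Re \frac{c(c+1)}{L-u-c}$ and $\Re R(L-u-c)$ are each $O(1/(L-N+\delta))$ by the condition $|L-u-c| \ge \delta$ and go to zero uniformly as $N \to \infty$; for $N$ sufficiently large they are absorbed into the $c\eta/L$ slack, yielding \eqref{eq:GL_bd2}. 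For the lower bound \eqref{eq:GL_bd1}, the stronger assumption $\liminf L/N > 1$ ensures $L - \Re u \ge L - N + \delta$ is of order $L$, so $|1 - u/L|$ is bounded below away from $0$ and a Taylor expansion gives $|c \log|1 - u/L|| \le C/(L-N)$; combined with the bounds on the other two terms (both $O(1/(L-N))$), this produces the claimed lower bound of the form $\exp(-C_{\delta,c}/(L-N))$.

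The main obstacle is keeping track of the degeneration as $\Re u$ approaches $N$: the quantity $L - u$ can become small in both the absolute and relative senses, so the error terms from \Cref{lem:gamma_estimate} are not automatically negligible. This is precisely why the lemma separates the regimes: a bare expansion on all of $\fR_\eta^{(N)}$ in \eqref{eq:GL_conv}, an upper bound requiring only $|L-u-c| \ge \delta$ in \eqref{eq:GL_bd2}, and a lower bound requiring the stronger ratio condition $\liminf L/N > 1$ in \eqref{eq:GL_bd1}. Organizing the proof around these three regimes, and being careful that all implicit constants depend only on $\delta$ and $c$, should make the verification routine once the substitution is in place.
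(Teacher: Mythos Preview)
Your overall strategy—reduce to \Cref{lem:gamma_estimate} via $z = L - u - c$ and then take real parts—is the same as the paper's, and your derivation of \eqref{eq:GL_conv} together with the remainder bound is fine.

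The gap is in your argument for \eqref{eq:GL_bd2}. You assert that the lower-order contributions $-\tfrac{1}{2}\Re\frac{c(c+1)}{L-u-c}$ and $\Re R(L-u-c)$ are each $O(1/(L-N+\delta))$ and hence vanish uniformly as $N\to\infty$. But the hypotheses of \eqref{eq:GL_bd2} only give $L \ge N$ and $|L-u-c| \ge \delta$; nothing forces $L-N$ to grow. When $L = N$ (or $L-N$ stays bounded) and $\Re u$ sits near the right edge $N-\delta$ of $\fR_\eta^{(N)}$, the quantity $|L-u-c|$ can be as small as $\delta$, so the remainder $\Re R$ is only bounded by $C_{\delta,c}/\delta^2$, a fixed constant that cannot be absorbed into the $c\eta/L = O(1/N)$ slack. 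The paper resolves this by splitting into a bulk region $\{\Re u \le L - N_0\}$ for a large fixed $N_0$, where your argument does go through (indeed the middle term and remainder combine to something nonpositive there), and an edge region $\{L - N_0 \le \Re u\}$, nonempty only when $L - N \le N_0 - \delta$, where the leading factor $|1-u/L|^c$ is of order at most $(N_0/N)^c \to 0$ and therefore dominates even constant-size errors. Your proposal is missing this edge case.

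A smaller point: your claim for \eqref{eq:GL_bd1} that ``a Taylor expansion gives $|c\log|1-u/L|| \le C/(L-N)$'' is not right as written. For $\Re u$ near $N-\delta$ and $L \sim (1+\epsilon)N$ one has $|1-u/L| \approx \epsilon/(1+\epsilon)$, so $c\log|1-u/L|$ is a fixed negative constant rather than $O(1/(L-N))$. The paper's own argument here is terse (it just invokes $|L-u-c| \ge L-N-c$ and \eqref{eq:GL_conv}), so you should revisit this step carefully regardless.
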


\begin{proof}
By \Cref{lem:gamma_estimate}, we obtain \eqref{eq:GL_conv}. Observe that
\[ |L-u-c| \ge L-\Re u-c \ge L - N - c, \quad u \in \fR_\eta^{(N)} \]
for $L \ge N$. Then \eqref{eq:GL_conv} implies \eqref{eq:GL_bd1}.

Thus we are left to prove \eqref{eq:GL_bd2}. For this, we bound ${}_c G_L(u)$ separately on $\fR_\eta^{(N)} \cap \{\Re u \le L - N_0\}$ and $\fR_\eta^{(N)} \cap \{ L - N_0 \le \Re u \le L\}$ where $N_0$ is a large number independent of $N$, $L$ determined below. We have
\[ \left| \frac{L-\Re u - c}{L - u - c} \right| \le C_{\delta,c,\eta} \]
for $u \in \fR_\eta^{(N)} \cap \{|L-u-c|\ge\delta\}$ and in particular on $u \in \fR_\eta^{(N)} \cap \{\Re u \le L - N_0\}$ where $N_0$ is some fixed number. Thus
\begin{align*}
\Re \left( -\frac{1}{2}  \frac{c(c+1)}{L-u-c} + R(L-u-c) \right) &\le \Re \left( -\frac{1}{2} \frac{c(c+1)}{L-\Re u - c} + \frac{1}{2}\frac{c(c+1)}{L-\Re u - c} - \frac{1}{2}\frac{c(c+1)}{L-u-c} + \frac{C_{\delta,c}}{|L-u-c|^2} \right) \\
& = - \frac{1}{2} \frac{c(c+1)}{L-\Re u - c} + \Re \left( \frac{-c(c+1) \bi \Im u}{2(L - \Re u - c)(L-u-c)} + \frac{C_{\delta,c}}{|L-u-c|^2} \right) \\
& \le - \frac{1}{2} \frac{c(c+1)}{L-\Re u - c} + \frac{C_{\delta,c,\eta}}{|L-\Re u - c|^2}.
\end{align*}
for $u \in \fR_\eta^{(N)} \cap \{\Re u \le L - N_0\}$ where we use the bound on the remainder in the first inequality. By choosing $N_0$ sufficiently large, the first summand on the right hand side dominates. Then \eqref{eq:GL_conv} implies
\[ |{}_c G_L(u)| \le \left|1 - \frac{u}{L} \right|^c \exp\left( \frac{-C_{\delta,N_0}}{L-\Re u - c} \right) \le \left|1 - \frac{u}{L} \right|^c, \quad u \in \fR_\eta^{(N)} \cap \{\Re u \le L - N_0\} \]
where $L \gg N_0$. Since
\[ \log\left| 1 - \frac{u}{L} \right| \le \log \left(1 - \frac{\Re u}{L} + \frac{\eta}{L} \right) \le -\frac{\Re u}{L} + \frac{\eta}{L}, \]
we obtain
\begin{align} \label{eq:GL_bd_bulk}
|{}_c G_L(u)| \le \exp\left( -c\frac{\Re u}{L} + c\frac{\eta}{L}\right), \quad u \in \fR_\eta^{(N)} \cap \{\Re u \le L - N_0\}
\end{align}
for sufficiently large $N$. We also have that \eqref{eq:GL_conv} implies the bound
\[ |{}_c G_L(u)| \le \left( \frac{N_0}{N} e^{C_{\delta,\eta,N_0}}\right)^c < \exp\left( -c\frac{\Re u}{L} \right), \quad u \in \fR_\eta^{(N)} \cap \{|L-u-c| \ge \delta\} \cap \{L - N_0 \le \Re u \le L\}. \]
where the second inequality holds for large $N$ (recall $L \ge N$). Combining the above with \eqref{eq:GL_bd_bulk} gives \eqref{eq:GL_bd2}.
\end{proof}

\subsection{Proofs of Local Theorems} \label{ssec:local_proof}

We prove \Cref{thm:local}, then prove \Cref{thm:local_cpx,thm:local_cpx_jacobi} simultaneously.

\begin{proof}[Proof of \Cref{thm:local}]

By \Cref{thm:obs}, and replacing $u_i$ with $u_i - N \theta$, we get
\[ \E\left[ \prod_{i=1}^m \left( \frac{1}{N} \prod_{\tau = 1}^{T_i} \frac{N+M_\tau+\alpha_\tau-1}{N+\alpha_\tau-1} \right)^{k_i} \fP_{k_i}(\by^{(T_i)}) \right]
= \frac{(-\theta)^{-m}}{(2\pi\bi)^{\sum_i k_i}} \oint \cdots \oint \prod_{i < i'} \cC(U_i,U_{i'}) \prod_{i=1}^m \cB(U_i) I_{T_i}(U_i) \, dU_i. \]
where $U_i := (u_{i,1},\ldots,u_{i,k_i})$ and $I_T := I_T^{(N)}$ is defined by
\[ I_T(U) := \prod_{i=1}^k \frac{1 - \frac{u_i}{\theta N}}{-u_i/\theta} \prod_{\tau=1}^T \frac{1 - \frac{u_i}{\theta(N+\alpha_\tau-1)}}{1 - \frac{u_i}{\theta(N + M_\tau + \alpha_\tau - 1)}} \]
for $U = (u_1,\ldots,u_k)$. The contours are given as follows. The $u_{i,j}$-contour $\fU_{i,j}$ is positively oriented around $0$, avoids the poles at $\theta (N + M_\tau +\alpha_\tau-1)$ for $1 \le \tau \le T_i$ and is enclosed by $\fU_{i',j'} - \theta, \fU_{i',j'} + 1$ whenever $(i',j') > (i,j)$ in lexicographical order. Since $\theta(N + M_\tau +\alpha_\tau-1)$ tends to $\infty$, we may take the contours $\fU_{i,j}$ to be fixed for sufficiently large $N$. The theorem follows by observing
\[ \lim_{N\to\infty} I_{T_i}(U) = \prod_{i=1}^k \exp\left(-\gamma_i u_i/\theta\right) \frac{1}{-u_i/\theta} \]
uniformly for $U = (u_1,\ldots,u_k)$ in compact subsets of $\C^k \setminus \{\vec{0}\}$.
\end{proof}

\begin{proof}[Proofs of \Cref{thm:local_cpx,thm:local_cpx_jacobi}]
Both \Cref{thm:local_cpx,thm:local_cpx_jacobi} are in the setting $\theta = 1$, the difference being that the former is under the assumption that $\by^{(T)}$ are obtained from Ginibre matrices (we refer to this as the \emph{Ginibre case}) and the latter uses Assumption \ref{assum:local} (which we refer to as the \emph{Jacobi case}). We begin by writing our expressions in a common way. Let
\[ E^{(N)} := \left\{ \def\arraystretch{2.8}\begin{array}{ll} \displaystyle  \E\left[ \prod_{i=1}^m \frac{1}{N^{c_i(T_i+1)}} \fP_{c_i}(\by^{(T_i)}) \right] & \mbox{for the Ginibre case,} \\
\displaystyle \E\left[ \prod_{i=1}^m \left( \frac{1}{N} \prod_{\tau=1}^{T_i} \frac{N+M_\tau+\alpha_\tau-1}{N+\alpha_\tau-1} \right)^{c_i} \fP_{c_i}(\by^{(T_i)}) \right] & \mbox{for the Jacobi case.} \end{array} \right. \]
We are interested in the limit of $E^{(N)}$ as $N\to\infty$. By \Cref{thm:obs_cpx}, \Cref{prop:obs_ginibre_cpx}, and replacing $u_i$ with $u_i - N + 1$, we get
\begin{align} \label{eq:ginibre_proof1}
E^{(N)} = \frac{(-1)^m}{(2\pi\bi)^m} \oint \cdots \oint \prod_{1 \le i < j \le m} \frac{(u_j - u_i)(u_j + c_j - u_i - c_i)}{(u_j - u_i - c_i)(u_j + c_j - u_i)} \prod_{i=1}^m I_i(u_i) \, \frac{du_i}{c_i}
\end{align}
where
\[ I_i(u) := I_i^{(N)}(u) :=
\displaystyle \prod_{\ell=1}^N \frac{u + \ell - N}{u + c_i + \ell - N} \cdot \prod_{\tau = 1}^T \frac{{}_{c_i}G_{N+\alpha_\tau-1}(u)}{{}_{c_i}G_{N+M_\tau+\alpha_\tau-1}(u)} . \]
We recall ${}_cG_L(u)$ is defined by \eqref{eq:GL} and for the Ginibre case we take $M_\tau = \infty$ and $\alpha_\tau = 1$ for all $\tau \ge 1$ with ${}_cG_\infty \equiv 1$. The contours in \eqref{eq:ginibre_proof1} are given as follows. The $u_i$-contour $\fU_i^{(N)}$ is positively oriented around $\{-c_i,\ldots,-c_i+N-1\}$ but does not enclose $N+\alpha_\tau-1,\ldots,N+M_\tau + \alpha_\tau-2$ for $1 \le \tau < \infty$, and is enclosed by $\fU_j^{(N)} - c_i, \fU_j^{(N)} + c_j$ for $j > i$; given that such contours exist. Let
\[ \Z_{\vec{c}} := \Z \cup \bigcup_{i=1}^m (\Z - c_i). \]
The assumption that $a := \liminf_{N\to\infty}( \inf_T \alpha_T) > 0$ and $c_i < a/m$ implies we can choose
\begin{align*}
r_1,\ldots,r_m \in (N - 1,N + a - 1) \setminus \Z_{\vec{c}} \quad & \mbox{such that} \quad r_i - r_{i-1} > a/m. \\
s_1,\ldots,s_m \in (-\infty,-a/m) \setminus \Z_{\vec{c}} \quad & \mbox{such that} \quad s_{i-1} - s_i > a/m
\end{align*}
Then $\fU_i^{(N)}$ can be chosen to be a counterclockwise contour around the boundary of the rectangle
\begin{align*}
\left\{u \in \C: |\Im u | \le |s_i|, s_i \le \Re u \le r_i \right\}.
\end{align*}
For $1 \le i \le m$, let $\fU_i$ be the counterclockwise contour around the boundary of the semi-infinite region
\[ \left\{ u \in \C: |\Im u| \le |s_i|, s_i \le \Re u \right\}. \]

The idea of the proof is to use dominated convergence where most of the work in finding a dominating function for $I_i$ is done by \Cref{lem:GL}. We divide the proof into two steps. First, we examine the $N\to\infty$ asymptotics of the function $I_i(u)$. Then, we apply the asymptotics to \eqref{eq:ginibre_proof1} to prove the theorem.

\vspace{2mm}

\textbf{Step 1.} Let
\[ \fR_\eta^{(N)} := \{u \in \C: |\Im u| \le \eta,\quad -\eta \le \Re u \le N - \delta \}. \]
The goal is to control the term $I_i(u)$ as $N\to\infty$ on the region
\[ \fR_\eta^{(N)} \cap \{\mathrm{dist}(u,\Z \cup(\Z \cup c_i)) \ge \delta\} \]
where $\eta > 0$ is some fixed parameter. Throughout this step, all constants depend on $\delta$, $c_i$, and $\eta$. We prove there exist constants $C, C' > 0$ such that
\begin{align} \label{eq:IcT_bd}
|I_i(u)| \le C \exp\left(-C' \Re u \right), \quad u \in \fR_\eta^{(N)} \cap \{\mathrm{dist}(u,\Z \cup (\Z-c_i)) \ge \delta\}
\end{align}
for sufficiently large $N$, and for fixed $u \in \bigcup_{N=1}^\infty \fR_\eta^{(N)} \setminus (\Z \cup(\Z-c_i))$, we have
\begin{align} \label{eq:IcT_conv}
\lim_{N\to\infty} I_i(u) = \frac{\Gamma(-u-c_i)}{\Gamma(-u)} \exp\left[ - \gamma_i \left( \frac{c_i(c_i+1)}{2} + c_i u \right) \right] =: g_i(u)
\end{align}
where we set $\gamma_i = \wh{T}_i$ in the Ginibre case.

We write
\[ I_i(u) = \frac{\Gamma(-u-c_i)}{\Gamma(-u)} \cdot J(u) \]
where
\begin{align} \label{eq:J_term}
J(u) &:= G_N(u) \prod_{\tau=1}^{T_i} \frac{G_{N+\alpha_\tau-1}(u)}{G_{N+M_\tau+\alpha_\tau-1}(u)}.
\end{align}
Then
\begin{align} \label{eq:preJ_bd}
\begin{multlined}
\left| \frac{\Gamma(-u-c_i)}{\Gamma(-u)} \right| = \left| \frac{\sin \pi u}{\sin \pi(u+c_i)} \frac{\Gamma(1+u)}{\Gamma(1+u+c_i)} \right| < C \left|\frac{\Gamma(1+u)}{\Gamma(1+u+c_i)} \right| < C' |1 + u + c_i|^{-c_i}, \\
\mbox{for}~ u \in \fR_\eta^{(N)} \cap \{\mathrm{dist}(u,\Z \cup(\Z - c_i)) \ge \delta\}
\end{multlined}
\end{align}
where the equality follows from Euler's reflection formula, the first inequality from the periodicity of sine in the real direction, and the second inequality from \Cref{lem:gamma_estimate}. Also, \eqref{eq:GL_bd1} and \eqref{eq:GL_bd2} applied to \eqref{eq:J_term} imply
\begin{align} \label{eq:J_bd}
|J(u)| \le C \exp\left( -C' \Re u \right), \quad u \in \fR_\eta^{(N)} \cap \{ \mathrm{dist}(u,\Z \cup(\Z \cup c_i)) \ge \delta\}
\end{align}
for sufficiently large $N$. Combining \eqref{eq:preJ_bd} and \eqref{eq:J_bd} implies \eqref{eq:IcT_bd}. Combining \eqref{eq:J_term} and \eqref{eq:GL_conv} implies \eqref{eq:IcT_conv}.

\vspace{2mm}

\textbf{Step 2.} Let 
\[ \fU_{i,0}^{(N)} := \fU_i^{(N)} \cap \fU_i, \quad \quad \fU_{i,1}^{(N)} := \fU_i^{(N)} \setminus \fU_{i,0}^{(N)} \]
We may rewrite \eqref{eq:ginibre_proof1} as
\begin{align} \label{eq:ginibre_proof2}
E^{(N)} = \sum_{\vec{s} \in \{0,1\}^m} \cI_{\vec{s}}
\end{align}
where
\[ \cI_{\vec{s}} := \frac{(-1)^m}{(2\pi\bi)^m} \oint \cdots \oint \prod_{1 \le i < j \le m} \frac{(u_j - u_i)(u_j + c_j - u_i - c_i)}{(u_j - u_i - c_i)(u_j + c_j - u_i)} \prod_{i=1}^m I_i(u_i) \, \frac{du_i}{c_i} \]
and the $u_i$-contour is $\fU_{i,s_i}^{(N)}$ for $1 \le i \le m$ if $\vec{s} = (s_1,\ldots,s_m)$. We prove that
\begin{align} \label{eq:cI0}
\cI_{\vec{0}} &\to \frac{(-1)^m}{(2\pi\bi)^m} \oint \cdots \oint \prod_{1 \le i < j \le m} \frac{(u_j - u_i)(u_j + c_j - u_i - c_i)}{(u_j - u_i - c_i)(u_j + c_j - u_i)} \prod_{i=1}^m g_i(u_i) \, \frac{du_i}{c_i}, \\ \label{eq:cIs}
\cI_{\vec{s}} &\to 0, \quad\quad \vec{s} \ne \vec{0}
\end{align}
as $N\to\infty$, where the $u_i$-contour in the right hand side of \eqref{eq:cI0} is given by $\fU_i$ for $1 \le i \le m$. By \eqref{eq:ginibre_proof2} and the Cauchy determinant formula, this completes the proof of \Cref{thm:local_cpx,thm:local_cpx_jacobi}. Observe that
\[ \prod_{1 \le i < j \le m} \frac{(u_j - u_i)(u_j + c_j - u_i - c_i)}{(u_j - u_i - c_i)(u_j + c_j - u_i)} \]
is bounded on $(u_1,\ldots,u_m) \in \bigcup_{N=1}^\infty (\fU_1^{(N)} \times \cdots \times \fU_m^{(N)})$. Moreover, for a suitably chosen $\eta$ and $\delta$, we have $\fU_i^{(N)} \subset \fR_\eta^{(N)} \cap \{ \mathrm{dist}(u,\Z\cup(\Z-c_i) \ge \delta\}$. Then \eqref{eq:IcT_bd}, \eqref{eq:IcT_conv}, and dominated convergence imply \eqref{eq:cI0}. To prove \eqref{eq:cIs}, we use the boundedness of the cross-term to write
\[ |\cI_{\vec{s}}| \le C \int_{\fU_{1,s_1}^{(N)}} \! d|u_1| \cdots \int_{\fU_{m,s_m}^{(N)}} \! d|u_m| \cdot \prod_{i=1}^m | I_i(u_i)| = C \prod_{i=1}^m \int_{\fU_{i,s_i}^{(N)}} \! |I_i(u)| d|u|. \]
Using the bound \eqref{eq:IcT_bd}, we have that
\[ \int_{\fU_{i,s_i}^{(N)}} \! |I_i(u)| \, d|u| \]
is of constant order when $s_i = 0$ and is $o(1)$ when $s_i = 1$. Thus \eqref{eq:cIs} follows.
\end{proof}

\appendix \section{} \label{sec:matrix}

We demonstrate that $\beta$-Jacobi product processes with certain parameters can be realized as the squared singular values of products of truncated Haar-distributed $\U_\beta$ matrices for $\beta = 1,2,4$.

Fix $\theta \in \{1/2,1,2\}$, and positive integer parameters $L_1,L_2,\ldots$, $N$, $N_1,N_2,\ldots,$ such that
\[ N \le N_T, \quad \max(N_{T-1},N_T) \le L_T, \quad \quad T \in \Z_+ \]
where $N_0 := N$. Let $U_1,U_2,\ldots$ be independent random matrices such that $U_T$ is a random Haar $\U_{2\theta}(L_T)$ matrix. Let $X_T$ be an $N_T \times N_{T-1}$ submatrix (or \emph{truncation}) of $U_T$,
\[ Y_T := X_T \cdots X_1, \]
and $\vec{y}^{(T)} \in \cR^N$ be the ordered vector of eigenvalues of $Y_T^*Y_T$.

\begin{theorem} \label{thm:matrix_model}
Suppose $\vec{x}^{(1)},\vec{x}^{(2)},\ldots$ are independent random vectors in $\cR^N$ such that $\vec{x}^{(T)} \sim \P_\theta^{\alpha_T,M_T,N}$ where $\alpha_T = N_T - N + 1$ and $M_T = L_T - N_T$. Then $\left(\vec{y}^{(T)}\right)_{T\in \Z^+}$ is equal in distribution to
\[ \left( \vec{x}^{(1)} \boxtimes_{2\theta} \cdots \boxtimes_{2\theta} \vec{x}^{(T)}\right)_{T \in \Z^+}. \]
\end{theorem}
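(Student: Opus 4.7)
The plan is induction on $T$. The base case $T=1$ is immediate from \Cref{prop:U_beta}: $Y_1 = X_1$ is an $N_1 \times N$ submatrix of the Haar $\U_{2\theta}(L_1)$ matrix $U_1$, so $\vec{y}^{(1)} \sim \P_\theta^{\alpha_1, M_1, N}$, matching the law of $\vec{x}^{(1)}$.

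For the inductive step, since $X_T$ is independent of $Y_{T-1}$ the process $(Y_T)_{T \in \Z_+}$ is Markov, so it suffices to show that the conditional distribution of $\vec{y}^{(T)}$ given $Y_{T-1}$ depends on $Y_{T-1}$ only through $\vec{y}^{(T-1)}$ and coincides with the law of $\vec{x} \boxtimes_{2\theta} \vec{y}^{(T-1)}$ for a fresh $\vec{x} \sim \P_\theta^{\alpha_T, M_T, N}$. Combined with the induction hypothesis, this yields joint distributional equality. Fix an SVD $Y_{T-1} = A D B^{*}$ with $A$ an $N_{T-1} \times N$ isometry, $D$ an $N \times N$ diagonal satisfying $D^2 = \mathrm{diag}(\vec{y}^{(T-1)})$, and $B \in \U_{2\theta}(N)$. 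Unitarity of $B^{*}$ yields that the squared singular values of $Y_T = X_T A D B^{*}$ coincide with those of the $N_T \times N$ matrix $X_T A D$.

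The crucial reduction is to identify the distribution of $X_T A$. Extend $A$ measurably to an $N_{T-1} \times N_{T-1}$ unitary matrix $\widetilde{A} = (A, A_\perp)$. Right $\U_{2\theta}(N_{T-1})$-invariance of the truncation $X_T$ (inherited from bi-invariance of Haar measure on $\U_{2\theta}(L_T)$) gives, conditionally on $A$, the equality in distribution $X_T \widetilde{A} \overset{d}{=} X_T$. Selecting the first $N$ columns on both sides, $X_T A$ has the distribution of the first $N$ columns of $X_T$, which is an $N_T \times N$ submatrix of the Haar $\U_{2\theta}(L_T)$ matrix $U_T$. As this distribution does not depend on $A$, $X_T A$ is independent of $Y_{T-1}$, and by \Cref{prop:U_beta} its squared singular values are distributed as $\P_\theta^{\alpha_T, M_T, N}$ with $\alpha_T = N_T - N + 1$ and $M_T = L_T - N_T$.

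To conclude, take the SVD $X_T A = A' D' B'^{*}$ with $A'$ an $N_T \times N$ isometry, $D'$ an $N \times N$ diagonal with $(D')^2 = \mathrm{diag}(\vec{x})$, and $B' \in \U_{2\theta}(N)$. Since $A'$ has orthonormal columns, the squared singular values of $X_T A D = A' D' B'^{*} D$ coincide with those of the $N \times N$ matrix $D' B'^{*} D$. The right $\U_{2\theta}(N)$-invariance of $X_T A$, together with essential uniqueness of the SVD, forces $B'$ to be Haar-distributed on $\U_{2\theta}(N)$ conditional on $D'$, so $D' B'^{*}$ is an $N \times N$ right $\U_{2\theta}$-invariant random matrix with squared singular values $\vec{x}$, independent of $D$. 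The definition of $\boxtimes_{2\theta}$ now identifies the squared singular values of $D' B'^{*} D$ with $\vec{x} \boxtimes_{2\theta} \vec{y}^{(T-1)}$, closing the induction. The main subtlety is the conditional Haar distribution of $B'$ given $D'$, which follows from the uniqueness-up-to-phase of the SVD together with the right-invariance of $X_T A$.
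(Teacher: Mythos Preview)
Your proof is correct and takes essentially the same route as the paper: induction on $T$, with the inductive step reducing the rectangular product $X_T Y_{T-1}$ to a square $\boxtimes_{2\theta}$-product via the SVD of $Y_{T-1}$ and right $\U_{2\theta}$-invariance of the truncation. The paper isolates this reduction as \Cref{lem:switch_product}, which works directly with $(XW)^*(XW)$ and thereby sidesteps your appeal to the conditional Haar law of $B'$; that appeal is correct in substance, though ``essential uniqueness of the SVD'' is not quite the right justification---the clean fix is to replace $X_TA$ by $X_TA V$ for an independent Haar $V$ before decomposing, which makes $V^*B'$ manifestly Haar and independent of $D'$.
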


\begin{lemma} \label{lem:switch_product}
Let $n,n_1,n_2,m \in \Z_+$ such that $n \le n_i \le m$, $i = 1,2$. Suppose $X$ and $\wt{X}$ are respectively $n_2 \times n_1$ and $n_2\times n$ truncations of a random Haar $\mathbb{U}_{2\theta}(m)$ matrix. Let $W$ be a fixed $n_1 \times n$ matrix and $\wt{W} := (W^*W)^{1/2}$. If $\sigma(A) \in \cR^N$ denotes the singular values of a matrix $A$, then $\sigma(XW) \overset{d}{=} \sigma(\wt{X} \wt{W})$.
\end{lemma}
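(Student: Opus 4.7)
The plan is to exploit the bi-unitary invariance of truncated Haar matrices together with the singular value decomposition of $W$. The key property is that if $U$ is Haar on $\U_{2\theta}(m)$ and $X$ is the top-left $n_2 \times n_1$ block of $U$, then for any fixed $V \in \U_{2\theta}(n_1)$ we have $XV \overset{d}{=} X$, because multiplying $U$ on the right by $\mathrm{diag}(V, I_{m-n_1})$ preserves the Haar distribution and the top-left $n_2 \times n_1$ block of $U \cdot \mathrm{diag}(V,I)$ is exactly $XV$. Similarly $\wt{X}$ is right-invariant under $\U_{2\theta}(n)$.

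First I would reduce to a convenient choice of truncations. By the left/right permutation-invariance of Haar measure, we may assume without loss of generality that $X$ is the top-left $n_2 \times n_1$ block of $U$ and $\wt{X}$ is the top-left $n_2 \times n$ block of $U$, so that $\wt{X}$ consists of the first $n$ columns of $X$. Next, take the singular value decomposition $W = V_1 \Sigma V_2^*$, with $V_1 \in \U_{2\theta}(n_1)$, $V_2 \in \U_{2\theta}(n)$, and $\Sigma \in \R^{n_1 \times n}$ containing the singular values of $W$ on its main diagonal and zeros elsewhere. Since $n \le n_1$, we can write $\Sigma = \binom{D}{0}$ where $D$ is the $n \times n$ diagonal matrix of singular values of $W$.

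Now I would carry out the main computation. Using right-invariance of $X$ and the fact that right-multiplication by the unitary $V_2^*$ does not affect singular values,
\[
\sigma(XW) = \sigma(XV_1 \Sigma V_2^*) \overset{d}{=} \sigma(X \Sigma V_2^*) = \sigma(X \Sigma).
\]
By our compatible choice of truncations, $X\Sigma = \wt{X} D$. On the other side, $\wt{W} = (W^*W)^{1/2} = V_2 D V_2^*$, so
\[
\sigma(\wt{X}\wt{W}) = \sigma(\wt{X}V_2 D V_2^*) = \sigma(\wt{X}V_2 D) \overset{d}{=} \sigma(\wt{X} D)
\]
using right-invariance of $\wt{X}$ under $\U_{2\theta}(n)$. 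Chaining these equalities gives $\sigma(XW) \overset{d}{=} \sigma(\wt{X}\wt{W})$, as desired.

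The only subtle point is the reduction at the start to truncations that are nested (i.e., $\wt{X}$ is the first $n$ columns of $X$); once that is in place, the argument is a short application of $\U_{2\theta}$-invariance and the SVD. I do not anticipate this reduction being a serious obstacle, since the marginal distribution of any $k \times \ell$ block of a Haar matrix depends only on $(k,\ell,m)$, but it is the one bookkeeping step that must be stated carefully.
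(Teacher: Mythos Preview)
Your proof is correct and follows essentially the same route as the paper's: singular value decomposition of $W$ combined with right $\U_{2\theta}$-invariance of truncated Haar matrices. The paper phrases the computation in terms of the Gram matrices $(XW)^*(XW)$ and $\wt{X}^*\wt{X}$ rather than directly in terms of singular values, and it absorbs your ``reduction to nested truncations'' into the single line $P_{n\times n_1} U^* X^* X U P_{n_1\times n} \overset{d}{=} \wt{X}^* \wt{X}$, but the substance is identical.
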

\begin{proof}
Let $P_{a\times b}$ denote the $a\times b$ matrix with the $\min(a,b)\times \min(a,b)$ identity matrix in the upper left corner and $0$ elsewhere. The singular value decomposition of $M$ gives
\[ W = U P_{n_1\times n} \Sigma V^* \]
where $\Sigma = \mathrm{diag}(\sigma(W))$, $U \in \U_{2\theta}(n_1),V \in \U_{2\theta}(n)$. Then
\[ (XW)^*(XW) = V \Sigma P_{n\times n_1} U^* X^* X U P_{n_1\times n} \Sigma V^*. \]
Observe that
\[ P_{n\times n_1} U^* X^* X U P_{n_1\times n} \overset{d}{=} \wt{X}^* \wt{X} \overset{d}{=} V^* \wt{X}^* \wt{X} V \]
using the fact that the distributions of $X$ and $\wt{X}$ are invariant under right translation by $\U_{2\theta}$. Thus
\[ (XW)^*(XW) \overset{d}{=} V \Sigma V^* \wt{X}^* \wt{X} V \Sigma V^* = (\wt{X} \wt{W})^* \wt{X} \wt{W} \]
which proves the lemma.
\end{proof}

\begin{proof}[Proof of \Cref{thm:matrix_model}]
Set
\[ \wt{\vec{y}}^{(T)} := \vec{x}^{(1)} \boxtimes_{2\theta} \cdots \boxtimes_{2\theta} \vec{x}^{(T)}. \]
Let $\wt{X}_1,\wt{X}_2,\ldots$ be independent such that $\wt{X}_T$ is an $N_T \times N$ truncation of a Haar $\U_{2\theta}(M_T)$ matrix, and
\[ \wt{Y}_T := (\wt{X}_T^*\wt{X}_T)^{1/2} \cdots (\wt{X}_1^*\wt{X}_1)^{1/2}. \]
By \Cref{prop:U_beta} and the definition of $\boxtimes_{2\theta}$, $\wt{\vec{y}}^{(T)}$ is the squared singular values vector of $\wt{Y}_T$. We have $\vec{y}^{(1)} \overset{d}{=} \wt{\vec{y}}^{(1)}$, and for any fixed $\vec{u} \in \cU^N$,
\[ (\vec{y}^{(T)}|\vec{y}^{(T-1)} = \vec{u}) \overset{d}{=} (\wt{\vec{y}}^{(T)}|\wt{\vec{y}}^{(T-1)} = \vec{u}) \]
by \Cref{lem:switch_product}, thus completing our proof.
\end{proof}

\section{Observables of Schur Processes} \label{sec:obs}

We derive a contour integral formula for joint moments of a special case of \emph{Schur processes} --- Macdonald processes in the case $q = t$.

For $q = t$, the Macdonald symmetric functions become the \emph{Schur functions}
\[ s_\lambda(X) := P_\lambda(X;t,t) = Q_\lambda(X;t,t), \quad s_{\lambda/\mu}(X) := P_{\lambda/\mu}(X;t,t) = Q_{\lambda/\mu}(X;t,t) \]
which are independent of $t$. Thus properties for Macdonald symmetric functions are inherited by the Schur functions. For example, for any countable set of variables $X,Y$, \eqref{eq:branch} implies
\begin{align} \label{eq:schur_branch}
s_{\lambda/\nu}(X,Y) = \sum_{\mu \in \Y} s_{\lambda/\mu}(X) s_{\mu/\nu}(Y).
\end{align}
The Schur functions have an explicit form
\begin{align} \label{eq:alternant}
s_\lambda(x_1,\ldots,x_n) = \frac{\det \begin{pmatrix} x_i^{\lambda_j + n - j} \end{pmatrix}_{i,j=1}^n}{\prod_{1 \le i < j \le n} (x_i - x_j) },
\end{align}
for details see \cite{Mac}*{Chapter I, Sections 3 \& 4}

\begin{definition} \label{def:schur_process}
Suppose $\vec{a} := (a_1,\ldots,a_N) \in \R_+^N$, $\vec{b} \in (b_1,\ldots,b_M) \in \R_+^M$ such that $a_ib_j < 1$ for $1 \le i \le N$, $1 \le j \le M$. Let $\mathbb{SP}_{\vec{a},\vec{b}}$ denote the measure on $\Y^M = \Y \times \cdots \times \Y$ where
\begin{align} \label{eq:schur_process}
\mathbb{SP}_{\vec{a},\vec{b}}(\vec{\lambda}) := \prod_{\substack{1 \le i \le N \\ 1 \le j \le M}} (1 - a_i b_j) \cdot s_{\lambda^M}(a_1,\ldots,a_N) s_{\lambda^M/\lambda^{M-1}}(b_M) \cdots s_{\lambda^2/\lambda^1}(b_2) s_{\lambda^1}(b_1)
\end{align}
for $\vec{\lambda} := (\lambda^1,\ldots,\lambda^M) \in \Y^M$.
\end{definition}

\begin{remark}
The measure $\mathbb{SP}_{\vec{a},\vec{b}}$ is the $q = t$ case of the so-called \emph{ascending Macdonald process}. We have
\[ \sum_{\lambda^1,\ldots,\lambda^M \in \Y} s_{\lambda^M}(a_1,\ldots,a_N) s_{\lambda^M/\lambda^{M-1}}(b_M) \cdots s_{\lambda^2/\lambda^1}(b_2) s_{\lambda^1}(b_1) = \prod_{\substack{1 \le i \le N \\ 1 \le j \le M}} \frac{1}{1 - a_i b_j} \]
as a consequence of \eqref{eq:schur_branch} and the $q = t$ case of \eqref{eq:cauchy1}.
\end{remark}

We prove the following contour integral formula for joint expectations of $\boldsymbol{p}_t$ (recall \Cref{def:p}).

\begin{theorem} \label{thm:schur_obs}
Suppose $\vec{a} := (a_1,\ldots,a_N) \in \R_+^N$, $\vec{b} \in (b_1,\ldots,b_M) \in \R_+^M$ such that $a_ib_j < 1$ for $1 \le i \le M$, $1 \le j \le N$. For real $t_1,\ldots,t_m > 0$, integers $1 \le n_m \le \cdots \le n_1 \le M$, and $\vec{\lambda} \sim \mathbb{SP}_{\vec{a},\vec{b}}$, we have
\begin{align*}
\E\left[\prod_{i=1}^m \boldsymbol{p}_{t_i}(\lambda^{n_i})\right] = \frac{1}{(2\pi\bi)^m} \oint \cdots \oint \prod_{1 \le i < j \le m} \frac{(z_j - z_i)(t_i z_j - t_j z_i)}{(t_i z_j - z_i)(z_j - t_j z_i)} \prod_{i=1}^m \left( \prod_{\ell=1}^N \frac{z_i - a_\ell}{z_i - t_i a_\ell} \cdot \prod_{\ell=1}^{n_i} \frac{1 - t_i^{-1} b_\ell z_i}{1 - b_\ell z_i} \right)  \frac{dz_i}{z_i} 
\end{align*}
where the $z_i$-contour $\fY_i$ is positively oriented around $0, \{t_ia_\ell\}_{\ell=1}^N$ but does not encircle $\{b_\ell^{-1}\}_{\ell=n_i}^M$, and is encircled by $t_j^{-1} \fY_j$, $t_i \fY_j$ for $j > i$; given that such contours exist.
\end{theorem}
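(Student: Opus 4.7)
The plan is to use a difference-operator approach, analogous to the proof of \Cref{prop:macobs} but adapted to the Schur ($q=t$) setting, where the first Macdonald operator has a particularly clean eigenrelation on Schur polynomials. For $t > 0$, define
\[ D_t f(\vec{a}) := \sum_{\ell=1}^N \prod_{\ell' \ne \ell} \frac{ta_\ell - a_{\ell'}}{a_\ell - a_{\ell'}}\, f(a_1, \ldots, ta_\ell, \ldots, a_N), \]
which satisfies $D_t s_\lambda(\vec{a}) = \bigl(\sum_\ell t^{\lambda_\ell + N - \ell}\bigr) s_\lambda(\vec{a})$, and set $\cD_t := (1-t^{-1})t^{-(N-1)} D_t + t^{-N}\mathrm{Id}$ so that $\cD_t s_\lambda(\vec{a}) = \boldsymbol{p}_t(\lambda)\, s_\lambda(\vec{a})$. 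A direct residue computation on the Cauchy kernel $\Pi_n(\vec{a}) := \prod_{i \le N,\, j \le n}(1-a_i b_j)^{-1}$ then yields
\[ \cD_t \Pi_n(\vec{a}) = \Pi_n(\vec{a}) \cdot \frac{1}{2\pi\bi} \oint_{\fY} \prod_{\ell=1}^N \frac{z - a_\ell}{z - ta_\ell} \prod_{j=1}^n \frac{1 - t^{-1} b_j z}{1 - b_j z}\, \frac{dz}{z}, \]
where $\fY$ encircles $0$ and $\{ta_\ell\}$ but not $\{b_j^{-1}\}_{j \le n}$: the residue at $z=0$ contributes the $t^{-N}$ term, while the residues at $z = ta_\ell$ sum to $(1-t^{-1})t^{-(N-1)} D_t \Pi_n(\vec{a}) / \Pi_n(\vec{a})$.

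For the single-observable case $m=1$, marginalizing the Schur process over all levels other than $n$ using \eqref{eq:flipid} shows that $\lambda^n$ has the Schur measure distribution in $(\vec{a}; b_1, \ldots, b_n)$, with normalization $\Pi_n(\vec{a})^{-1}$. Hence $\E[\boldsymbol{p}_t(\lambda^n)] = \Pi_n(\vec{a})^{-1} \cdot \cD_t \Pi_n(\vec{a})$, which is exactly the $m=1$ formula.

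For general $m$, I would induct on $m$, applying $\cD_{t_m}, \cD_{t_{m-1}}, \ldots, \cD_{t_1}$ in sequence to a factorization of the partition function into intermediate Cauchy kernels (obtained by summing out Schur variables between consecutive sampled levels). After $\cD_{t_{i+1}}, \ldots, \cD_{t_m}$ have been converted into iterated contour integrals in $z_{i+1}, \ldots, z_m$, applying $\cD_{t_i}$ requires shifting $a_\ell \to t_i a_\ell$ inside every previously introduced factor $\prod_{\ell'} \frac{z_j - a_{\ell'}}{z_j - t_j a_{\ell'}}$. At the relevant residue $a_\ell = z_i / t_i$, the $\ell' = \ell$ piece of this shift becomes
\[ \frac{(z_j - t_i a_\ell)(z_j - t_j a_\ell)}{(z_j - a_\ell)(z_j - t_i t_j a_\ell)} \bigg|_{a_\ell = z_i/t_i} = \frac{(z_j - z_i)(t_i z_j - t_j z_i)}{(t_i z_j - z_i)(z_j - t_j z_i)}, \]
reproducing exactly the cross-term in the theorem. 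The $b$-dependent factors $\prod_{\ell \le n_j} \frac{1 - t_j^{-1} b_\ell z_j}{1 - b_\ell z_j}$ are unaffected by $\vec{a}$-shifts and pass through unchanged, preserving their form at each level.

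The main obstacle is tracking contour geometry throughout the induction. The shifts produce spurious poles at $z_i = t_i z_j$ and $z_i = z_j / t_j$ for $j > i$ that must lie outside $\fY_i$; this matches exactly the encirclement condition that $\fY_i$ lies inside both $t_j^{-1}\fY_j$ and $t_i\fY_j$ for every $j > i$. Verifying that the inductive choice of contours remains simultaneously compatible with these encirclements and with the constraint that $\fY_i$ avoid $\{b_\ell^{-1}\}_{\ell \ge n_i}$ completes the argument, and the theorem's hypothesis that such contours exist ensures that there is always room to make these choices consistently.
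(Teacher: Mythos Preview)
Your approach is correct and leads to the stated formula, but it is \emph{dual} to the paper's own argument. The paper applies difference operators in the $b$-variables: it uses $D_t^{b_1,\ldots,b_n}$ acting on $n$ variables (with $n = n_i$ varying with the level), exploits the eigenrelation on $s_\lambda(b_1,\ldots,b_n)$, and converts each application to a one-dimensional contour integral in a variable $w$; the cross-term then arises from the action of $D_{t_i}^{b_1,\ldots,b_{n_i}}$ on the previously-introduced factor $\prod_{j \le n_{i'}} \frac{w_{i'} - t_{i'}^{-1} b_j}{w_{i'} - b_j}$, and a final substitution $z_i = w_i^{-1}$ puts the answer in the stated form. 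You instead act in the $a$-variables (always $N$ of them), interleaving the operators with the intermediate Cauchy products $\Pi_{(n_{i+1},n_i]}(\vec a)$ that come from summing out unobserved levels via the skew Cauchy identity; your cross-term arises from the $a_\ell \to t_i a_\ell$ shift hitting the previously-introduced factors $\prod_\ell \frac{z_j - a_\ell}{z_j - t_j a_\ell}$. The paper's route is slightly more streamlined in that the level index $n_i$ is encoded directly in the number of variables the operator sees, avoiding the need to sandwich Cauchy products; your route has the mild advantage of working directly in the $z$-variables without a final inversion, and of keeping the number of operator-variables fixed at $N$ throughout.
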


\begin{proof}
Let $D_t^{x_1,\ldots,x_n}$ act on functions in $(x_1,\ldots,x_n)$ by
\[ D_t^{x_1,\ldots,x_n} := (1 - t^{-1})\sum_{i=1}^n \prod_{j \ne i} \frac{x_i - t^{-1}x_j}{x_i - x_j} T_{t,x_i} + t^{-n} \]
where $T_{t,x_i}$ is the $t$-shift operator
\[ T_{t,x_i}: f(x_1,\ldots,x_n) \mapsto f(x_1,\ldots,x_{i-1},tx_i,x_{i+1},\ldots,x_n). \]
Using \eqref{eq:alternant}, we have
\[ D_t^{x_1,\ldots,x_n} s_\lambda(x_1,\ldots,x_n) = \boldsymbol{p}_t(\lambda) s_\lambda(x_1,\ldots,x_n), \quad \quad \lambda \in \Y_n. \]
Then \eqref{eq:schur_branch} and \eqref{eq:schur_process} imply
\[ \prod_{\substack{1 \le i \le N \\ 1 \le j \le M}} (1 - a_i b_j) \cdot D_{t_m}^{b_1,\ldots,b_{n_m}} \cdots D_{t_1}^{b_1,\ldots,b_{n_1}} \prod_{\substack{1 \le i \le N \\ 1 \le j \le M}} \frac{1}{1 - a_i b_j} = \E \left[ \prod_{i=1}^m \boldsymbol{p}_{t_i}(\lambda^{n_i}) \right]. \]
On the other hand, if $n \le M$ and $f$ is analytic in a neighborhood of $\{0\} \cup \{b_i\}_{i=1}^n$ such that $f(tw)/f(w)$ is also analytic in this neighborhood, then by residue expansion,
\[ D_t^{b_1,\ldots,b_n} \prod_{i=1}^M f(b_i) = \prod_{i=1}^M f(b_i) \cdot \frac{1}{2\pi\bi} \oint \left( \prod_{j=1}^n \frac{w - t^{-1} b_j}{w - b_j} \right) \frac{f(tw)}{f(w)}  \frac{dw}{w} \]
where the contour is positively oriented around $0$, $\{b_i\}_{i=1}^n$, but no other poles of the integrand. By iterating, we obtain
\begin{align*}
\E \left[ \prod_{i=1}^m \boldsymbol{p}_{t_i}(\lambda^{n_i}) \right] & = \prod_{\substack{1 \le i \le N \\ 1 \le j \le M}} (1 - a_i b_j) \cdot D_{t_m}^{b_1,\ldots,b_{n_m}} \cdots D_{t_1}^{b_1,\ldots,b_{n_1}} \prod_{\substack{1 \le i \le N \\ 1 \le j \le M}} \frac{1}{1 - a_i b_j} \\
& = \frac{1}{(2\pi\bi)^m} \oint \cdots \oint \prod_{1 \le i < j \le m} \frac{(w_i - w_j)(w_i - \frac{t_j}{t_i}w_j)}{(w_i - \frac{1}{t_i} w_j)(w_i - t_j w_j)}  \prod_{i=1}^m \left( \prod_{j=1}^{n_i} \frac{w_i - \frac{1}{t_i} b_j}{w_i - b_j} \prod_{j=1}^N \frac{1 - a_j w_i}{1 - t_i a_j w_i} \right) \frac{dw_i}{w_i}
\end{align*}
where the $w_j$-contour is positively oriented around $0$, $\{b_j\}_{j=1}^{n_i}$, but no other poles of the integrand, and enclosed by $t_i w_i$, $t_j^{-1} w_i$ for $i < j$; assuming such contours exist. Taking $z_i = w_i^{-1}$ completes the proof.
\end{proof}

\section{} \label{sec:appendix}
We recall the notion of cumulants and some basic properties.

\begin{definition} \label{def:cumulant1}
For any finite set $S$, let $\Theta_S$ be the collection of all set partitions of $S$, that is
\[ \Theta_S := \left\{ \{S_1,\ldots,S_d\}: d > 0, ~\bigcup_{i=1}^d S_i = S, ~S_i \cap S_j = \emptyset~\forall i \ne j,~S_i \ne \emptyset ~\forall i \in [[1,d]] \right\}. \]
For a random vector $\vec{u} = (u_1,\ldots,u_m)$ and any $v_1,\ldots,v_\nu \in \{u_1,\ldots,u_m\}$, define the \emph{(order $\nu$) cumulant}
\begin{align} \label{eq:cumulant1}
\kappa(v_1,\ldots,v_\nu) := \sum_{\substack{d > 0 \\ \{S_1,\ldots,S_d\} \in \Theta_{[[1,\nu]]}}} (-1)^{d-1} (d-1)! \prod_{\ell=1}^d \E \left[ \prod_{i \in S_\ell} v_i \right].
\end{align}
\end{definition}

The definition implies that for any random vector $\vec{u}$, the existence of all cumulants of order up to $\nu$ is equivalent to the existence of all moments of order up to $\nu$. Note that the cumulants of order $2$ are exactly the covariances:
\[ \kappa(v_1,v_2) = \cov(v_1,v_2). \]
We can also express the cumulant as
\begin{align} \label{eq:cumulant2}
\kappa(v_1,\ldots,v_\nu) = (-\bi)^\nu \left. \frac{\partial^\nu}{\partial t_1 \cdots \partial t_m} \log \E \left[ \exp\left( \bi \sum_{j=1}^\nu t_j v_j \right) \right] \right|_{t_1 = \cdots = t_\nu = 0}.
\end{align}
For further details see \cite{PT}*{Sections 3.1 \& 3.2} wherein the agreement between \eqref{eq:cumulant1} and \eqref{eq:cumulant2} is shown by taking \eqref{eq:cumulant2} as the definition and proving \eqref{eq:cumulant1}. Note that \eqref{eq:cumulant2} implies

\begin{lemma} \label{lem:gauss_cumulant}
A random vector is Gaussian if and only if all cumulants of order $\ge 3$ vanish.
\end{lemma}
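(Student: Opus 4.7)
The strategy is to exploit the generating-function characterization of cumulants given by \eqref{eq:cumulant2}: if
\[ \phi(\vec{t}) := \E\left[\exp\left(\bi \sum_{j=1}^m t_j v_j\right)\right] \]
is the characteristic function of $\vec{v} = (v_1,\ldots,v_m)$, then each order-$\nu$ joint cumulant equals the corresponding mixed partial derivative of $\log \phi$ at $\vec{t} = \vec{0}$, up to a factor of $(-\bi)^\nu$.

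The forward direction is immediate. If $\vec{v}$ is Gaussian with mean $\vec{\mu}$ and covariance $\Sigma$, then a direct computation gives $\log \phi(\vec{t}) = \bi \vec{\mu}^\top \vec{t} - \tfrac{1}{2} \vec{t}^\top \Sigma \vec{t}$, which is a polynomial of total degree at most $2$. Every partial derivative of order $\geq 3$ of such a polynomial vanishes identically, so \eqref{eq:cumulant2} yields that all cumulants of order $\geq 3$ vanish.

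For the converse, I assume that all cumulants of order $\geq 3$ vanish; existence of cumulants of all orders already presupposes finiteness of all joint moments. The plan is to invert \eqref{eq:cumulant1} by M\"obius inversion on the partition lattice $\Theta_{[[1,\nu]]}$ to express each joint moment $\E[v_{j_1}\cdots v_{j_\nu}]$ as a polynomial in cumulants of order $\leq \nu$. Setting all cumulants of order $\geq 3$ equal to zero collapses this expansion to a sum over set partitions into blocks of sizes $1$ and $2$ only, with each block contributing a first or second cumulant. This is precisely the Wick/Isserlis formula for the joint moments of the Gaussian vector with mean $\vec{\mu} := (\kappa(v_i))_{i=1}^m$ and covariance $\Sigma := (\kappa(v_i,v_j))_{i,j=1}^m$. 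Hence $\vec{v}$ and this Gaussian have identical joint moments of every order.

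To conclude I invoke the fact that the multivariate Gaussian law is determined by its moments: since its moments grow at most factorially, the multivariate Hamburger moment problem for a Gaussian is determinate (for instance via Carleman's condition applied to each one-dimensional projection, together with the Cram\'er--Wold device). Thus the law of $\vec{v}$ must coincide with this Gaussian. The main delicate point of the argument is this last uniqueness step: vanishing of the higher cumulants only directly constrains the derivatives of $\log \phi$ at the origin, so without the determinacy of the Gaussian moment problem one could not rule out a non-Gaussian distribution sharing the same moments. Everything else is a bookkeeping exercise in the combinatorics of set partitions via \eqref{eq:cumulant1}.
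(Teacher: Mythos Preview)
Your proof is correct. The paper does not actually give its own argument for this lemma: it simply remarks that \eqref{eq:cumulant2} implies the statement and refers to \cite{PT} for background. Your write-up is therefore a fleshed-out version of the paper's one-line justification, and in particular you correctly identify the one nontrivial point in the converse direction---that vanishing of the higher derivatives of $\log\phi$ at $\vec{0}$ only pins down the Taylor jet, so one needs determinacy of the Gaussian moment problem (equivalently, analyticity of the Gaussian characteristic function) to conclude.
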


We have a useful inversion lemma

\begin{lemma} \label{lem:cumulant_inverse}
If $K$ and $E$ are complex-valued functions on the set of nonempty subsets of $[[1,\nu]]$ such that
\[ E(S) = \sum_{\substack{d > 0 \\ \{S_1,\ldots,S_d\} \in \Theta_S}} \prod_{i=1}^d K(S_i) \quad \quad \mbox{for all nonempty $S \subset [[1,\nu]]$,} \]
then
\begin{align} \label{eq:KS}
K(S) = \sum_{\substack{d > 0 \\ \{S_1,\ldots,S_d\} \in \Theta_S}} (-1)^{d-1} (d-1)! \prod_{i=1}^d E(S_i).
\end{align}
\end{lemma}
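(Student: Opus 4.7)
The plan is to recognize \eqref{eq:KS} as Möbius inversion on the lattice $\Theta_S$ of set partitions of $S$ ordered by refinement ($\sigma \le \pi$ iff every block of $\sigma$ is contained in a block of $\pi$). Write $\hat{0}_S$ for the partition into singletons and $\hat{1}_S = \{S\}$ for the one-block partition. Extend $E$ and $K$ to functions on $\Theta_S$ by setting
\[ \hat{E}(\pi) := \prod_{B \in \pi} E(B), \qquad \hat{K}(\pi) := \prod_{B \in \pi} K(B). \]
Then $\hat{E}(\hat{1}_S) = E(S)$ and $\hat{K}(\hat{1}_S) = K(S)$, and the goal is to invert the given relation at the coarsest partition.

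First I would verify that the hypothesis lifts to the identity $\hat{E}(\pi) = \sum_{\sigma \le \pi} \hat{K}(\sigma)$ for every $\pi \in \Theta_S$. Indeed, applying the hypothesis to each block $B$ of $\pi$ and multiplying gives
\[ \hat{E}(\pi) = \prod_{B \in \pi} \sum_{\tau_B \in \Theta_B} \prod_{C \in \tau_B} K(C) = \sum_{\sigma \le \pi} \prod_{C \in \sigma} K(C) = \sum_{\sigma \le \pi} \hat{K}(\sigma), \]
where the middle equality uses the fact that a partition $\sigma$ refining $\pi$ is exactly the datum of a partition $\tau_B$ of each block $B$ of $\pi$. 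By Möbius inversion on the finite poset $\Theta_S$, this yields
\[ \hat{K}(\pi) = \sum_{\sigma \le \pi} \mu_{\Theta_S}(\sigma,\pi)\, \hat{E}(\sigma). \]

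To conclude, I would specialize to $\pi = \hat{1}_S$ and invoke the classical formula for the Möbius function of the partition lattice: for $\sigma \in \Theta_S$ with $d$ blocks, the interval $[\sigma, \hat{1}_S]$ is isomorphic to the partition lattice on a $d$-element set, and $\mu_{\Theta_S}(\sigma, \hat{1}_S) = (-1)^{d-1}(d-1)!$ (see e.g. Rota's formula in Stanley's \emph{Enumerative Combinatorics} Vol.~1, Example 3.10.4). Substituting gives
\[ K(S) = \hat{K}(\hat{1}_S) = \sum_{\sigma \in \Theta_S} (-1)^{|\sigma|-1}(|\sigma|-1)! \prod_{B \in \sigma} E(B), \]
which is exactly \eqref{eq:KS}.

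The only nontrivial ingredient is the evaluation of $\mu_{\Theta_S}(\sigma,\hat{1}_S)$. Since this is a standard fact, I would simply cite it rather than reproving it. If a self-contained argument is preferred, an equivalent strategy is strong induction on $|S|$: isolate the one-block term $K(S)$ in the defining relation, apply the inductive hypothesis to each $K(B)$ with $|B| < |S|$, regroup the resulting sum by the refinement $\sigma \in \Theta_S$, and verify combinatorially that the coefficient of $\prod_{B \in \sigma} E(B)$ collapses to $(-1)^{|\sigma|-1}(|\sigma|-1)!$; the verification reduces to the identity $\sum_{k=1}^d (-1)^{k-1}(k-1)!\, S(d,k) = 0$ for $d \ge 2$ with $S(d,k)$ Stirling numbers of the second kind, which is classical.
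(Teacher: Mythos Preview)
Your proof is correct. It proceeds by M\"obius inversion on the partition lattice, lifting the hypothesis multiplicatively to all of $\Theta_S$ and then invoking Rota's formula $\mu(\sigma,\hat{1}_S) = (-1)^{d-1}(d-1)!$ for $\sigma$ with $d$ blocks.

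The paper takes a genuinely different route. Rather than working in the partition lattice directly, it encodes $E$ and $K$ as coefficients of multilinear monomials in a formal power series in $\nu$ variables: it sets $E(t_1,\ldots,t_\nu) = \sum E_{n_1,\ldots,n_\nu} t_1^{n_1}\cdots t_\nu^{n_\nu}/(n_1!\cdots n_\nu!)$ with $E_{0,\ldots,0}=1$, defines $K(t_1,\ldots,t_\nu) := \log E(t_1,\ldots,t_\nu)$, and observes that extracting the coefficient of $\prod_{j\in S} t_j$ from the exponential relation $E = e^K$ reproduces the hypothesis, while extracting the same coefficient from $K = \log E$ (via the power series $\log(1+x) = \sum (-1)^{d-1} x^d/d$) yields \eqref{eq:KS}. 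This is essentially the moment--cumulant correspondence in disguise. The advantage of the paper's approach is that the factor $(-1)^{d-1}(d-1)!$ falls out automatically from the logarithm expansion, with no appeal to the M\"obius function of $\Theta_S$; your approach is more structural and makes the underlying poset combinatorics explicit, at the cost of citing (or reproving) Rota's formula.
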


\begin{proof}
Let $E_{n_1,\ldots,n_\nu} \in \C$ with $E_{0,\ldots,0} = 1$. Define the formal power series
\begin{align*}
E(t_1,\ldots,t_\nu) &:= \sum_{n_1,\ldots,n_\nu \ge 0} \frac{E_{n_1,\ldots,n_\nu}}{n_1! \cdots n_\nu!} t_1^{n_1} \cdots t_\nu^{n_\nu} \\
K(t_1,\ldots,t_\nu) &:= \sum_{n_1,\ldots,n_\nu \ge 0} \frac{K_{n_1,\ldots,n_\nu}}{n_1! \cdots n_\nu!} t_1^{n_1} \cdots t_\nu^{n_\nu} := \log E(t_1,\ldots,t_\nu).
\end{align*}
For each nonempty $S \subset [[1,\nu]]$, let $K_{n_1,\ldots,n_\nu} = K(S)$ if $n_j = \1_{j\in S}$. The relation $E(\vec{t}) = e^{K(\vec{t})}$ implies
\[ E_{n_1,\ldots,n_\nu} = \sum_{\substack{d > 0 \\ \{S_1,\ldots,S_d\} \in \Theta_S}} \prod_{\ell=1}^d K(S_\ell) = E(S) \]
for any nonempty $S \subset [[1,\nu]]$ where $n_j = \1_{j\in S}$. The relation $K(\vec{t}) = \log E(\vec{t})$ now implies \eqref{eq:KS}.
\end{proof}

\bibliography{mybib}

\end{document}